\documentclass[11pt, twoside, leqno]{article}

\usepackage{amsmath}
\usepackage{amssymb}
\usepackage{titletoc}
\usepackage{mathrsfs}
\usepackage{amsthm}
\usepackage{color}
\usepackage{latexsym,amssymb}
\usepackage{indentfirst}
\usepackage{color}
\usepackage{txfonts}

\textwidth=15cm
\textheight=21cm
\oddsidemargin 0.46cm
\evensidemargin 0.46cm
\allowdisplaybreaks

\pagestyle{myheadings}
\markboth{\footnotesize\rm\sc
Yan Li}
{\footnotesize\rm\sc Gradient estimates for elliptic systems}

\def\bint{{\ifinner\rlap{\bf\kern.30em--}
\int\else\rlap{\bf\kern.35em--}\int\fi}\ignorespaces}

\def\sbint{{\ifinner\rlap{\bf\kern.32em--}
\hspace{0.078cm}\int\else\rlap{\bf\kern.45em--}\int\fi}\ignorespaces}

\def\nn{{\mathbb N}}
\def\zz{{\mathbb Z}}

\def\XXint#1#2#3{{\setbox0=\hbox{$#1{#2#3}{\int}$ }
\vcenter{\hbox{$#2#3$ }}\kern-.6\wd0}}


\def\\det{\text{det}}

\def\.5{\frac{1}{2}}

\DeclareMathOperator{\dist}{dist}


\newtheorem{theorem}{Theorem}[section]
\newtheorem{definition}{Definition}[section]
\newtheorem{lemma}{Lemma}[section]
\newtheorem{remark}{Remark}[section]
\newtheorem{proposition}{Proposition}[section]
\newtheorem{corollary}{Corollary}[section]

\numberwithin{equation}{section}

\newcommand{\RN}[1]{%
  \textup{\uppercase\expandafter{\romannumeral#1}}%
}

\renewcommand{\epsilon}{\varepsilon}
\newcommand{\va}{\ensuremath{\varepsilon}}
\newcommand{\ptl}{\ensuremath{\partial}}
\newcommand{\om}{\ensuremath{\Omega}}

\newcommand{\R}{\ensuremath{\mathbb{R}}}
\newcommand{\bt}{\ensuremath{\beta}}

\newcommand{\lam}{\ensuremath{\lambda}}
\newcommand{\alp}{\ensuremath{\alpha}}
\newcommand{\dt}{\ensuremath{\delta}}

\newcommand{\wdt}{\ensuremath{\widetilde}}
\newcommand{\ud}{\mathrm{d}}

\newcounter{marnote}

\renewcommand{\appendix}{\par
   \setcounter{section}{0}%
   \setcounter{subsection}{0}%
   \setcounter{subsubsection}{0}%
   \gdef\thesection{\@Alph\c@section}%
   \gdef\thesubsection{\@Alph\c@section.\@arabic\c@subsection}%
   \gdef\theHsection{\@Alph\c@section.}%
   \gdef\theHsubsection{\@Alph\c@section.\@arabic\c@subsection}%
   \csname appendixmore\endcsname
 }

\begin{document}

\arraycolsep=1pt

\title{\bf\Large Gradient estimates for elliptic systems from composite materials with
 closely spaced stiff $C^{1,\gamma}$ inclusions
\footnotetext{
\hspace{-0.25cm} 2020 {\it Mathematics Subject Classification}. Primary: 74G70;
Secondary: 35B45 35B65 74B05.
\endgraf {\it Key words and phrases.}  Stress concentration, Schauder estimates,
gradient estimates, H\"{o}lder semi-norm, elliptic systems.
}}
\date{}
\author{Yan Li\footnote{Corresponding author,
Yan Li
E-mail: \texttt{yanli@mail.bnu.edu.cn}}}
\maketitle

\vspace{-0.8cm}

\begin{center}
\begin{minipage}{13cm}
{\small {\bf Abstract}\quad
This paper is devoted to establishing the
 pointwise upper and lower bounds estimates of the gradient
of the solutions to a class of general elliptic systems
with H\"{o}lder continuous coefficients in a narrow region
where the upper and lower boundaries is $C^{1,\gamma},$ $0<\gamma<1$,
weaker than the previous $C^{2,\gamma}$ assumption.
These estimates play a key role in the damage analysis of composite materials.
From our results, the damage may initiate from the narrowest place.}
\end{minipage}
\end{center}

\vspace{0.2cm}

\tableofcontents

\vspace{0.1cm}

\section{Introduction\label{s1}}

\subsection{Background and problem formulation}
Damage analysis of composite materials is of great significance in engineering,
which is also an important application of gradient estimates of second-order elliptic systems
 in partial differential equations \cite{dong,llby,ln}.
Babu\v{s}ka et al. \cite{basl} computationally
analyzed the damage and fracture in fiber composite materials,
and some numerical results of stress concentration are given,
which plays a key role in the problem of stress concentration
in materials.
At present, a lot of progress has been made in precisely
studying this field concentration phenomenon, see e.g.
\cite{ackly,bll,CL,dong,kly0,ky,LHLY,ln}.

From the viewpoint of mathematic, Li and Vogelius \cite{lv} described this stress concentration
 by using  the gradient of the solution
to a specific class of  elliptic systems
with partially degenerated coefficients.
Before studying the gradient estimates of elliptic systems,
Bonnetier et al. \cite{bv} considered the simplified scalar equation:
\begin{equation}\label{es}
\nabla \cdot \big((1+(a-1)\chi_{\cup_{i=1}^{N}D_{i}})\nabla v\big)=0,\quad \hbox{in}~~\Omega,
\end{equation}
to model a problem of  electric conduction,
where $a\ne1,$ $\Omega\subset\mathbb{R}^{n}$ represents a bounded domain,
$N\in \mathbb{Z}_{+}$ represents the number of the inclusions,
$D_i\subset\Omega$  represents the inclusions in the matrix material
and  close to each other.
Bonnetier and Vogelius  \cite{bv} proved  rigorously that the gradient
of the solution of \eqref{es} is indeed bounded,
when $N=2$ and $D_{1}, D_{2}$ is
two touching disks with comparable radii
in $\mathbb{R}^{2}$.
Li and Vogelius \cite{lv} extended the result  to
a large class of divergence form second order
 elliptic equations with piecewise
 H\"{o}lder continuous coefficients in $\mathbb{R}^{2}$,
 when $N\geq 2$ and the inclusions is $C^{1,\gamma},
 (0<\gamma<1)$ (see Definition \ref{defnbdd} below).
Li and Nirenberg \cite{ln} further extended
 to general divergence form
elliptic systems with H\"{o}lder continuous coefficients
satisfying the strong elliptic condition.

When the coefficients  degenerate to infinity in $D_{i},$
the gradient of the solution is no longer bounded but blows up.
For the scalar case, we call it perfect conductivity problem.
Let $\va$ be the distance between the two inclusions.
The blow-up rate of $|\nabla u|$ is respectively
$\va^{-1/2}$ in  two dimensions, $(\varepsilon|\ln \varepsilon|)^{-1}$
 in three dimensions and $\varepsilon^{-1}$ in four dimensions and high dimensions.
 See Bao, Li and Yin \cite{bly1},   Ammari, Kang and Lim \cite{akl},
 Ammari, Kang, Lee, Lee and Lim \cite{aklll}, and Yun \cite{y1}.
 There have been many papers on the problem and related ones: see e.g.
 \cite{ackly,adkl,akllz,bly2,bt2,kly0,llby,ly2} and the references therein.

However, when considering the gradient estimates to the solution of
linear elasticity problem, namely Lam\'{e} system,
the method of scalar equation is no longer suitable for using.
Under the assumption that the smoothness of the inclusion boundary is
$C^{2, \gamma}$ $(0<\gamma<1),$
Bao, Li and Li \cite{bll,bll2} applied an energy method and an iteration technique, which was first used in
\cite{llby}, to obtained pointwise upper bound of $|\nabla u|$ in the narrow region between inclusions.
 Kang and Yu \cite{ky} proved that the blow up rate $\va^{-1/2}$ is optimal in some two-dimensional cases
 when the smoothness of  inclusion boundary is of $C^{3,\gamma}.$
Ju, Li and Xu \cite{jlx} established the pointwise upper and lower bounds
of the gradient of solutions to a class of general elliptic systems in the narrow region between two $C^{2,\gamma}$ inclusions. For more work on elliptic equations and systems related to the study of composites, see \cite{bjl,bv,dong,dongzhang,JJ,GoBe1,GoBe2,kly0,LHLY,V1} and the references therein.

Under a weaker smoothness assumption on the inclusion boundary, namely, $C^{1,\gamma},$
Chen and Li \cite{CL} proved that the blow up rate of gradient for the Lam\'{e} system of linear
elasticity with partially infinite coefficients is $\va^{-1/(1+\gamma)}$ in two dimensions
and $\va^{-1}$ in $n\geq 3$ dimensions.

Contrary to the case where the smoothness of the inclusion boundary is $C^{2,\gamma}$ or higher,
less is known on such blow up phenomenon for the
case of weaker smoothness $C^{1,\gamma}.$
Based on the classical elliptic theory, a natural question is whether it is possible to obtain gradient estimates of the solutions to a class of general elliptic systems (see Definition \ref{defn1} below),
under a weaker smoothness assumption on the inclusions, namely, $C^{1,\gamma}.$
In addition, we want to obtain more information about the dependence of $|\nabla u|,$ which
play an important role in the study of the perfect conductivity problem  and Lam\'e system
with partially infinite coefficients.

In this paper, we investigate  the gradient estimates
of the solutions to a class of general elliptic systems with
H\"{o}lder continuous  coefficients  in a general narrow region between the two $C^{1,\gamma}$ inclusions.
This is a generalization of the stress concentration problem
 in two-phase high-contrast elastic composites with
densely packed $C^{1,\gamma}$ inclusions.
This estimates have a wide range of applications and
play a key role in the damage analysis of composite materials.
When we apply  this results to the Lam\'{e} systems of linear elasticity,
 under the assumptions of the $C^{1, \gamma}$-regularity  boundary,
the results can present more dependency information about gradient.
Our results show that the damage may initiate from the narrowest place.

Before state our results, we first introduce some definitions and notations, as well as fix our domain.

Let $U$ be any domain in $ \mathbb{R}^{n}.$
Denote by the \emph{symbol} $C(U)$ the
set of all continuous functions on $U.$
For every $0<\gamma\leq1,$
a function $u \in C(U)$ is said to be \emph{H\"{o}lder continuous with exponents $\gamma$} if $
|u(x)-u(y)|\leq C|x-y|^{\gamma},$ $x,y\in U$ for some  constant $C.$
If $u: U \rightarrow \mathbb{R}$ is bounded and continuous, we write
$\|u\|_{C(U)}:=\sup _{x \in U}|u(x)|.$
We use the \emph{symbol} $C^{k}(U)$ to
denote the set of all $k$-th continuously
differentiable functions on $U$ for  integer $k\geq0$.
For every $0<\gamma\leq1$, the $\gamma^{\text {th }}$-H\"{o}lder semi-norm of $u: U \rightarrow \mathbb{R}$ is
\begin{align}\label{semi}
[u]_{C^{0, \gamma}(U)}:=\sup _{\substack{x, y \in U\\  x \neq y}}
\Big\{\frac{|u(x)-u(y)|}{|x-y|^{\gamma}}\Big\}.
\end{align}
\begin{definition}
Let $0<\gamma\leq1$, $k\in \mathbb{Z}_{+}$,
and $\alp=(\alp_1,\cdots,\alp_{n})\in \mathbb{Z}^{n}$ be
a multiindex of order $|\alp|=\alp+\cdots+\alp_{n}$.
The \emph{H\"{o}lder space} $C^{k, \gamma}(U)$ is defined to be the
set of all  $k$-th  continuous differentiable real valued functions
satisfying the $k$-th order derivatives are H\"{o}lder continuous with exponents $\gamma$
and
$$
\|u\|_{C^{k, \gamma}(U)}:=\sum_{|\alpha| \leq k}\left\|\ptl^{\alpha} u\right\|_{C(U)}+\sum_{|\alpha|=k}\left[\ptl^{\alpha} u\right]_{C^{0, \gamma}(U)}< \infty,
$$
where $\partial^{\alp}u:=\frac{\ptl^{\alp_1}}{\ptl x_1^{\alp_1}}
\cdots\frac{\ptl^{\alp_n}}{\ptl x_n^{\alp_n}} u.$
In particular, for $0<\gamma<1$, we often use the \emph{symbol} $C^{\gamma}(U)$ to denote $C^{0,\gamma}(U).$
\end{definition}

\begin{definition}\label{defnbdd}
Let $U$ be any domain in $ \mathbb{R}^{n}$, the integer $k>0$ and $0<\gamma<1,$
the boundary $\ptl U$ is said to be $C^{k}$ or $C^{k, \gamma}$
if for each point $x_{0}\in \ptl U$ there exist $r>0$ and a
$ C^{k}$ or $C^{k, \gamma}$  function
$T: \mathbb{R}^{n-1} \rightarrow \mathbb{R}$
such that-upon relabeling and reorienting
the coordinates axes if necessary-we have
$$
U \cap B_{r}(x_{0})=\left\{x \in B_{r}(x_{0}) \mid x_{n}>T\left(x_{1}, \ldots, x_{n-1}\right)\right\}.
$$
Furthermore, the inclusion is said to
be $C^{k}(U)$ or $C^{k, \gamma}(U)$ if its boundary is $C^{k}(U)$ or $C^{k, \gamma}(U)$.
\end{definition}

\begin{definition}
Let $ 1 \leq p \leq \infty$, $k\in \mathbb{Z}_{+}$ and
$\alp=(\alp_1,\cdots,\alp_{n})\in \mathbb{Z}^{n}$ be
a multiindex of order $|\alp|=\alp+\cdots+\alp_{n}$.
The \emph{Sobolev space} $W^{k, p}(U)$ is defined to be the set of
all locally integrable functions
 $u: U \rightarrow \mathbb{R}$ such that for each multiindex $\alpha\in \mathbb{Z}_{+}^{n}$
 with $|\alpha| \leq k,$ $\ptl_{\alpha} u$ exists in the weak sense and belongs to the standard Lebesgue spaces $L^{p}(U)$ and
 $$
\|u\|_{W^{k, p}(U)}:= \begin{cases}\Big(\sum\limits_{|\alpha| \leq k} \int_{U}|\ptl^{\alpha} u|^{p} \ud x\Big)^{1 / p} & (1 \leq p<\infty) \\ \sum\limits_{|\alpha| \leq k} \operatorname{ess} \sup _{U}\left|\ptl^{\alpha} u\right| & (p=\infty)\end{cases}
< \infty.
 $$
 \end{definition}
Denote by the  \emph{symbol} $C_{\mathrm{c}}^{\infty}(U)$  the set of all infinitely continuously
differentiable functions on $U$ with compact support.
We denote by $W_{0}^{k, p}(U)$ the closure of $C_{c}^{\infty}(U)$ in $W^{k, p}(U)$.

Next, we fix our domain.
Let $D$ be a domain in $\mathbb{R}^{n}$, and $D_{1}$, $D_{2}$
be a pair of convex subdomains of $D\subset \mathbb{R}^n.$ Let the distance
 between $D_{1}$ and $D_{2}$ be $\varepsilon>0$ (sufficiently small positive number).
Denote $P_{1}:=(\vec{0}_{n-1},\varepsilon/2),$
 $P_{2}:=(\vec{0}_{n-1},-\varepsilon/2)$ the nearest points between $\partial D_{1}$ and $\partial D_{2}$ such that
\begin{align}\label{vare}
\dist(P_{1},P_{2})=\dist(\partial D_{1},\partial D_{2})=\varepsilon,
\end{align}
where for any $x' \in \mathbb{R}^{n-1},$ $x:=(x',x_n)\in\mathbb{R}^{n}.$
Let $B'_{r}$ be the ball with $\vec{0}_{n-1}$ as the center and $0< r\leq 1$ as the radius in $\R^{n-1}.$

Let $\va$ be as in \eqref{vare}, $h_{1}, h_{2}\in C^{1,\gamma}(B'_{1}),$ $0<\gamma<1$  and satisfy
\begin{align}
-\frac{\varepsilon}{2}+h_{2}(x')<\frac{\varepsilon}{2}+h_{1}(x'),&\quad\hbox{for}~~|x'|\leq 1,\label{1.42}\\
h_{1}(\vec{0}_{n-1})=h_{2}(\vec{0}_{n-1})=0,&\quad \nabla h_{1}(\vec{0}_{n-1})=\nabla h_{2}(\vec{0}_{n-1})=0,\label{h}
\end{align}
and there exist some constants $0<\kappa_{0}<\kappa_{1}$  such that
\begin{align}\label{h'}
\kappa_{0}|x'|^{\gamma}\leq|\nabla h_{1}(x')|,\quad|\nabla h_{2}(x')|\leq\kappa_{1}|x'|^{\gamma},\quad\hbox{for}~~|x'|\leq 1.
\end{align}

To be precise, we define general narrow region in $\mathbb{R}^{n}$: for $r\leq1$,
\begin{align}\label{Omega}
\Omega_{r}:=\Big\{(x',x_n)\in D:-\frac{\varepsilon}{2}+h_2(x')<x_{n}<\frac{\varepsilon}{2}+h_{1}(x'),~|x'|\leq r\Big\}.
\end{align}

We here assume that $\ptl D_1$ and $\ptl D_2$ are
$C^{1, \gamma},$ $0<\gamma<1$ as in Definition \ref{defnbdd}, and
the top and bottom boundaries of the narrow region $\Omega_1$ between $\ptl D_1$  and $\ptl D_2$ satisfy
\begin{equation}
\begin{aligned}\label{h1h2'}
&\{(x',x_n)\in \mathbb{R}^n: x_n=\frac{\va}{2}+h_1(x'),\, |x'|\leq 1\}\subset \ptl D_{1},\\
&\{(x',x_n)\in \mathbb{R}^n: x_n=-\frac{\va}{2}+h_2(x'),\, |x'|\leq 1\}\subset \ptl D_{2},\\
\hbox{and}&\\
&\{(x',x_n)\in \mathbb{R}^n:  -\frac{\va}{2}+h_2(x')<x_{n}<\frac{\va}{2}+h_1(x'),
\,|x'|\leq 1 \}
\cap (\ptl D_{1}\cup\ptl D_{2})=\emptyset.
\end{aligned}
\end{equation}
Furthermore, we denote the top and bottom boundaries of $\Omega_{r}$ as
\begin{align}\label{boundary}
\begin{aligned}
\Gamma_{r}^{+}:&=\{x\in \partial D_{1}:\,x_n=\frac{\varepsilon}{2}+h_{1}(x'),\,|x'|\leq r\},\\
\Gamma_{r}^{-}:&=\{x\in  \partial D_{2}:\, x_n=-\frac{\varepsilon}{2}+h_{2}(x'),\,|x'|\leq r\}.
\end{aligned}
\end{align}

We now introduce the definition of
general elliptic system with
H\"{o}lder continuous  coefficients in a narrow region $\Omega_{1}$ in this paper.

\begin{definition}\label{defn1}
Let $0<\gamma<1$, $m,n\in \mathbb{Z}_{+}$, $A_{ij}^{\alp\bt}, B_{ij}^{\alp}, C_{ij}^{\bt},
D_{ij}\in C^{\gamma}(\Omega_1)$ for any integer $0\leq\alp,\bt\leq n,$ $0\leq i,j \leq m,$ and
the matrix of coefficients $(A_{ij}^{\alpha\beta})^{1\leq\alpha,\beta\leq n}_{1\leq i,j\leq m}$  satisfy the  \emph{strong ellipticity condition} in $\Omega_1$,  namely,
there exists a constant $\lambda>0$
such that
\begin{align}\label{LHC}
\sum_{\alp,\bt,i,j}A_{ij}^{\alpha\beta}(x)\xi_{\alpha}\xi_{\beta}\eta_{i}
\eta_{j}\geq\lambda|\xi|^2|\eta|^{2},\quad \forall\, \xi\in\mathbb{R}^{n},\,
\eta\in\mathbb{R}^{m},\,
 x\in \Omega_1.
\end{align}
Let
\begin{align}
\begin{aligned}\label{1.51}
\varphi:&=\left(\varphi^{(1)},\varphi^{(2)},\cdots,\varphi^{(m)}\right)
\in C^{1, \gamma}(\Gamma_{1}^{+};\mathbb{R}^{m}),\\
\psi:&=\left(\psi^{(1)},\psi^{(2)},\cdots,\psi^{(m)}\right)
\in C^{1,\gamma}(\Gamma_{1}^{-};\mathbb{R}^{m}).
\end{aligned}
\end{align}
Then the systems as follows
\begin{align}\label{equ1}
\begin{aligned}
\left\{
  \begin{array}{ll}
    \sum_{\alp,\bt,j}\partial_{\alpha}\left(A_{ij}^{\alpha\beta}\partial_{\beta}u^{(j)}
    +B_{ij}^{\alpha}u^{(j)}\right)
+C_{ij}^{\beta}\partial_{\beta}u^{(j)}+D_{ij}u^{(j)}=0 & \hbox{in}~~\Omega_1,\\
    {\sf u}=\varphi, & \hbox{on}~~\Gamma_{1}^{+},\\
    {\sf u}=\psi, &\hbox{on} ~~\Gamma_{1}^{-},
  \end{array}
\right.
\end{aligned}
\end{align}
is called a \textit{general elliptic system},
where $\Gamma_{1}^{+}$ and $\Gamma_{1}^{-}$ are defined as in \eqref{boundary}.
\end{definition}
A function
${\sf u}:=(u^{(1)},u^{(2)},\cdots, u^{(m)})\in W^{1,2}(\Omega_1\subset \R^{n};\mathbb{R}^{m})$
 is said to
be a  \itshape{weak solution} of  the \emph{general elliptic systems} defined as in Definition \ref{defn1} if,
 for every vector-valued function $\phi\in W_{0}^{1,2}(\Omega_1\subset \R^{n};\R^{m})$,
\begin{align}
\begin{aligned}\label{1.54}
\sum_{\alp,\bt,j}\int_{\Omega_1}&\left(A_{ij}^{\alpha\bt}(x)\ptl_{\beta}u^{(j)}(x)
+B_{ij}^{\alp}(x)u^{(j)}(x)\right)\ptl_{\alp}\phi^{(i)}(x)\\
&-C_{ij}^{\bt}(x)\ptl_{\bt}u^{(j)}(x)\phi^{(i)}(x)-D_{ij}(x)u^{(j)}(x)\phi^{(i)}(x)\,dx=0
\end{aligned}
\end{align}
holds true, for any integer $i=1,\cdots,m$.

\begin{remark}
It is clear that hypotheses \eqref{LHC} and \eqref{AUP} are satisfied by
 the  Lam\'{e} system as follow, (see \cite{osy}),
$$
\lambda_1 \Delta {\sf u}+(\lambda_1+\mu_1) \nabla(\nabla \cdot {\sf u})=0,
$$
where Lam\'e constant $(\lambda_1, \mu_1)$ satisfies  ellipticity conditions:
$\mu_1>0,$ $\lambda_1+\mu_1>0$.
Therefore, the gradient estimates results in this paper include the case of Lam\'{e} systems.
\end{remark}

Now we are going to present our main result about pointwise gradient
estimates of the weak solutions to the \textit{general elliptic systems}, which is:
\begin{theorem} \label{Them1}
Let $\va$ be as in \eqref{vare},
 $0<\gamma<1$,  $h_{1},h_{2}\in C^{1,\gamma}(B'_{1})$ satisfy \eqref{1.42}-\eqref{1.50},
and $\Omega_{r},$ $r\in \{1/2, 1\}$ be as in \eqref{Omega}.
Let $\varphi$ and $\psi$ be as in \eqref{1.51} and
${\sf u}\in W^{1,2}({\Omega_1\subset \R^{n};\R^{m}})$ be a weak
solution of \emph{general elliptic system}  as in Definition \ref{defn1}.
Then there exists a positive constant $C$ independent of $\va$, such that,
for any $x=(x',x_{n})\in \Omega_{1/2}$,
\begin{align*}
|\nabla {\sf u}(x',x_n)|
\leq&\frac{C}{\va+|x'|^{1+\gamma}}\left|\varphi(x',\varepsilon/2+h_{1}(x'))
-\psi(x',-\varepsilon/2+h_2(x'))\right|\nonumber\\
&+C\left(\|\varphi\|_{C^{1,\gamma}(\Gamma_{1}^+)}+\|\psi\|_{C^{1,\gamma}(\Gamma_1^-)} +\|{\sf u}\|_{L^{2}(\Omega_1)}\right),
\end{align*}
where $\Gamma_{1}^{+}$ and $\Gamma_{1}^{-}$ are  as in \eqref{boundary}.

Moreover, if $\varphi^{(\ell)}(\vec{0}_{n-1},\va/2)\ne\psi^{(\ell)}(\vec{0}_{n-1},-\va/2)$
for some integer $\ell\in \{1,\cdots,m\},$ then there exists a positive constants $C$
independent of $\va$, such that,
for any $x_n\in (-\va/2,\va/2)$,
\begin{align*}
|\nabla {\sf u}(\vec{0}_{n-1},x_n)
|\geq C\frac{|\varphi^{(\ell)}(\vec{0}_{n-1},\va/2)-\psi^{(\ell)}
(\vec{0}_{n-1}, -\va/2)|}{\va}.
\end{align*}
\end{theorem}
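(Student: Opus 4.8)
The plan is to reduce the problem to a quantitative comparison between the solution ${\sf u}$ and an explicit linear "profile" in the narrow gap, then differentiate. First I would rescale. Fix a point $z=(z',z_n)\in\Omega_{1/2}$ and set $\rho:=\rho(z')$ comparable to $\max\{\va^{1/(1+\gamma)},|z'|\}$, so that in the cylinder $\{|x'-z'|\le\rho/2\}$ the local thickness of $\Omega_1$ — namely $\va+h_1(x')-h_2(x')$ — is comparable to $\delta(z'):=\va+|z'|^{1+\gamma}$; this last comparison is exactly where the lower bound $\kappa_0|x'|^\gamma\le|\nabla h_1(x')|$ in \eqref{h'} is used (integrating it gives $h_1(x')-h_2(x')\gtrsim|x'|^{1+\gamma}$), and the upper bound gives the matching $\lesssim$. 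Introduce the affine change of variables that straightens the slab: $y'=(x'-z')/\rho$, $y_n=\big(x_n-(-\va/2+h_2(x'))\big)/(\va+h_1(x')-h_2(x'))$. Under this map $\Omega_1\cap\{|x'-z'|\le\rho/2\}$ becomes a domain of unit size, the coefficients $A_{ij}^{\alpha\beta},\dots$ remain uniformly elliptic with $C^\gamma$ norms controlled independently of $\va$ (here the $C^{1,\gamma}$ regularity of $h_1,h_2$ together with $\nabla h_j(\vec 0)=0$ keeps the Jacobian and its Hölder seminorm bounded), and the system \eqref{equ1} is transformed into a system of the same type on a fixed domain.

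Next I would set up the comparison. Let $\bar{\sf u}$ be the linear-in-$y_n$ interpolation between the boundary data, i.e. in the original variables
\begin{align*}
\bar{\sf u}(x):=\psi\big(x',-\tfrac{\va}{2}+h_2(x')\big)+\frac{x_n-(-\va/2+h_2(x'))}{\va+h_1(x')-h_2(x')}\Big(\varphi\big(x',\tfrac{\va}{2}+h_1(x')\big)-\psi\big(x',-\tfrac{\va}{2}+h_2(x')\big)\Big),
\end{align*}
which matches ${\sf u}$ on $\Gamma_1^+\cup\Gamma_1^-$. In the rescaled variables $\bar{\sf u}$ is a fixed smooth function with $C^{1,\gamma}$ norm controlled by $\|\varphi\|_{C^{1,\gamma}}+\|\psi\|_{C^{1,\gamma}}$ plus the oscillation term, and $w:={\sf u}-\bar{\sf u}$ solves an inhomogeneous system of the same type, vanishing on the top and bottom boundary pieces, with right-hand side (in divergence and non-divergence form) whose size is controlled by the same quantities. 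Applying interior-and-boundary Schauder estimates (the classical $C^{1,\gamma}$ estimates for elliptic systems with $C^\gamma$ coefficients and $C^{1,\gamma}$ data, valid up to the flat portion of the boundary) to $w$ on the unit domain, and using a Caccioppoli/energy bound to absorb $\|w\|_{L^2}\lesssim\|{\sf u}\|_{L^2(\Omega_1)}+\|\varphi\|_{C^{1,\gamma}}+\|\psi\|_{C^{1,\gamma}}$, gives $\|\nabla_y w\|_{L^\infty}\lesssim \|\varphi\|_{C^{1,\gamma}(\Gamma_1^+)}+\|\psi\|_{C^{1,\gamma}(\Gamma_1^-)}+\|{\sf u}\|_{L^2(\Omega_1)}$ on the rescaled unit ball. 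Undoing the rescaling, $\partial_{x_n}=(\va+h_1-h_2)^{-1}\partial_{y_n}\approx\delta(z')^{-1}\partial_{y_n}$ and $\partial_{x'}\approx\rho^{-1}\partial_{y'}+(\text{lower order})$; since $\rho\gtrsim\delta(z')^{1/(1+\gamma)}\ge\delta(z')$ for $\delta\le1$, the dominant factor is $\delta(z')^{-1}$, and the $\nabla\bar{\sf u}$ contribution is $\lesssim\delta(z')^{-1}|\varphi(x',\cdot)-\psi(x',\cdot)|+\|\varphi\|_{C^{1,\gamma}}+\|\psi\|_{C^{1,\gamma}}$. Combining yields the claimed upper bound with $\delta(z')=\va+|z'|^{1+\gamma}$ in the denominator.

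For the lower bound I would restrict to the central segment $x'=\vec 0_{n-1}$, $x_n\in(-\va/2,\va/2)$, where the thickness is exactly $\va$ (since $h_1(\vec 0)=h_2(\vec 0)=0$). The idea is that if ${\sf u}(\vec 0,\va/2)-{\sf u}(\vec 0,-\va/2)$ has an $\ell$-th component of size $\eta:=|\varphi^{(\ell)}(\vec 0,\va/2)-\psi^{(\ell)}(\vec 0,-\va/2)|>0$, then the average of $\partial_{x_n}u^{(\ell)}$ over the vertical segment is exactly $\eta/\va$ up to an error coming from the tilt of $\Gamma^\pm_\va$, but along $x'=\vec 0$ there is no tilt; hence $\sup_{x_n}|\partial_{x_n}u^{(\ell)}(\vec 0,x_n)|\ge\eta/\va$ by the mean value theorem, giving $|\nabla{\sf u}(\vec 0,x_n)|\ge c\,\eta/\va$ at the maximizing height. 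To upgrade this to a bound valid at \emph{every} $x_n\in(-\va/2,\va/2)$ I would use the rescaled $C^{1,\gamma}$ (in fact $C^1$) estimate from the upper-bound argument: in the rescaled unit domain $\nabla_y{\sf u}$ is Hölder continuous with norm $\lesssim\eta/\va+(\text{bounded terms})$, so $\partial_{y_n}u^{(\ell)}$ cannot drop from its $\Theta(1)$ value (after rescaling, $\partial_{y_n}\approx\va\,\partial_{x_n}$, so the segment average is $\Theta(\eta)$ while the total boundary-data plus $L^2$ terms are $O(1)$) unless $\eta$ itself is dominated by those bounded terms — but one may assume $\eta\gtrsim$ the bounded terms, else the inequality is trivial after adjusting $C$. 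Oscillation control then forces $|\partial_{y_n}u^{(\ell)}|\gtrsim 1$ throughout, i.e. $|\nabla{\sf u}(\vec 0,x_n)|\gtrsim\eta/\va$ for all $x_n\in(-\va/2,\va/2)$.

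The main obstacle is the rescaling step under the weak $C^{1,\gamma}$ hypothesis: one must verify that the straightening diffeomorphism has Jacobian \emph{and Hölder seminorm of the Jacobian} bounded uniformly in $\va$, so that the transformed system genuinely has $\va$-independent Schauder constants. This is delicate because $h_1,h_2$ are only $C^{1,\gamma}$ (not $C^2$), so the derivatives of the change of variables involve $\nabla h_j$, whose $C^\gamma$ seminorm is finite but whose modulus is only $\lesssim|x'|^\gamma$; the key point is that on the cylinder $|x'-z'|\le\rho/2$ one has $|x'|\lesssim\rho$, hence $|\nabla h_j|\lesssim\rho^\gamma$, and after dividing by the thickness $\approx\delta(z')\gtrsim\rho^{1+\gamma}$ the relevant ratio $\rho^\gamma/\delta(z')\cdot\rho=\rho^{1+\gamma}/\delta(z')$ stays bounded — this is precisely why $\rho$ must be chosen $\approx\delta(z')^{1/(1+\gamma)}$ rather than $\approx\delta(z')$, and why the two-sided bound \eqref{h'} on $|\nabla h_j|$ (not merely $C^{1,\gamma}$ regularity) is essential. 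The remaining work — Caccioppoli, boundary Schauder on a flat domain, and bookkeeping of the chain rule — is routine.
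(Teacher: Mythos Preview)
Your rescaling step contains a genuine gap. The change of variables you propose is \emph{anisotropic}: $y'=(x'-z')/\rho$ with $\rho\approx\delta(z')^{1/(1+\gamma)}$, while $y_n$ is normalized by the thickness $\approx\delta(z')$. Under any such map the principal part of the system transforms with coefficient matrix essentially $J A J^{T}$, where $J=\partial y/\partial x$ has diagonal entries of size $\rho^{-1}$ in the $y'$-directions and $\delta(z')^{-1}$ in the $y_n$-direction. After normalizing, the ratio between the $(n,n)$ block and the tangential block is $(\rho/\delta(z'))^{2}\approx\delta(z')^{-2\gamma/(1+\gamma)}\to\infty$; the transformed operator is therefore \emph{degenerate} elliptic in the $y'$-directions, with ellipticity constant tending to $0$ as $\varepsilon\to0$. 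Classical Schauder estimates then carry constants that blow up with $\varepsilon$, and your conclusion that ``the coefficients \dots\ remain uniformly elliptic with $C^\gamma$ norms controlled independently of $\varepsilon$'' is false. The computation you do at the end---checking that $\rho^{1+\gamma}/\delta(z')$ stays bounded---controls the H\"older seminorm of the Jacobian, but says nothing about the ellipticity ratio $(\rho/\delta(z'))^{2}$, which is the quantity that actually enters the Schauder constant.

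The paper resolves this by rescaling \emph{isotropically} at scale $\delta(z')$ (both $x'$ and $x_n$), which does preserve uniform ellipticity; the price is that the rescaled domain corresponds to the small cylinder $\widehat{\Omega}_{\delta(z')}(z)$ of horizontal radius only $\delta(z')$, so one must first obtain a sharp bound on $\int_{\widehat{\Omega}_{\delta(z')}(z)}|\nabla{\sf w}_\ell|^{2}$. A single Caccioppoli inequality on $\Omega_{1}$ does not give the correct power of $\delta(z')$; instead the paper runs an iteration (Lemma~\ref{lemmabddlocal}) over a geometric sequence of radii $t_\tau=\delta(z')+2c\tau\delta(z')$, using at each step a localized energy inequality together with the H\"older-seminorm estimate of Proposition~\ref{prop1} for the right-hand side. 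This iteration is the substantive technical core that your proposal omits. The lower bound then follows pointwise from $|\nabla{\sf v}_\ell|\ge|\nabla\widetilde{\sf u}_\ell|-|\nabla{\sf w}_\ell|$, since the iteration shows $|\nabla{\sf w}_\ell(\vec 0,x_n)|\lesssim\varepsilon^{-1/(1+\gamma)}$ is of strictly lower order than $|\nabla\widetilde{\sf u}_\ell(\vec 0,x_n)|\approx\varepsilon^{-1}$; your mean-value plus oscillation argument is plausible in spirit but again rests on the failed upper-bound rescaling.
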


For the convenience of further applications, we list the analog result about
the Lam\'{e} systems in the narrow region, which is:
\begin{corollary}\label{cor1}
Let $\va$ be as in \eqref{vare},
$\Omega_{r},$ $r\in\{\frac{1}{2},1\}$ be as in \eqref{Omega},
 $\Gamma_{1}^{+},$ $\Gamma_{1}^{-}$ be as in \eqref{boundary},
and $\varphi\in C^{1, \gamma}(\Gamma_{1}^{+};\mathbb{R}^{n}),$ $\psi\in C^{1, \gamma}(\Gamma_{1}^{-};\mathbb{R}^{n})$ be as in \eqref{1.51}.
Let $\lambda_{1},$ $\mu_{1}\in \mathbb{R}$ be the pair of Lam\'{e} constants which satisfies
the strong ellipticity conditions:   $\mu_1>0,$ $\lambda_1+\mu_1>0$. Let
${\sf u}=(u^{(1)}, \cdots, u^{(n)})\in W^{1,2}(\Omega_1)$ be the weak solution of
\begin{align}\label{lame}
\begin{aligned}
\left\{
  \begin{array}{ll}
  \mathcal{L}_{\lambda_{1},\mu_{1}} {\sf u}:=\nabla \cdot (\mathbb{C}^{0}e({\sf u}))=
    \lambda_1 \Delta {\sf u}+(\lambda_1+\mu_1) \nabla(\nabla \cdot {\sf u})=0,
     & \,\hbox{in}~~\Omega_1,\\
    {\sf u}=\varphi, & \,\hbox{on}~~\Gamma_{1}^{+},\\
    {\sf u}=\psi, &\,\hbox{on} ~~\Gamma_{1}^{-},
  \end{array}
\right.
\end{aligned}
\end{align}
where $e(\sf u)=\frac{1}{2}(\nabla u+(\nabla u)^{T})$ and
the  elastic tensor $\mathbb{C}^{0}=\mathbb{C}(\lambda_{1},\mu_{1})$ consists of elements
\begin{align}\label{cijkl}
C_{ijkl}(\lambda_{1},\mu_{1})=\lambda_{1}\delta_{ij}\delta_{kl}
+\mu_{1}(\delta_{ik}\delta_{jl}+\delta_{il}\delta_{jk}),\quad i,j,k,l=1,2,\cdots,n,
\end{align}
 and $\delta_{ij}$ is Kronecker symbol: $\delta_{ij}=0$ for $i\ne j,$ $\delta_{ij}=1$ for $i=j$.
Then there exists a positive constant $C$ independent of $\va$,
 for any $x=(x_1,x_2)\in \Omega_{1/2}$,
\begin{align*}
|\nabla {\sf u}(x',x_n)|
\leq&\frac{C}{\va+|x'|^{1+\gamma}}\left|\varphi(x',\varepsilon/2+h_{1}(x'))
-\psi(x',-\varepsilon/2+h_2(x'))\right|\nonumber\\
&+C\left(\|\varphi\|_{C^{1,\gamma}(\Gamma_{1}^+)}+\|\psi\|_{C^{1,\gamma}(\Gamma_1^-)} +\|{\sf u}\|_{L^{2}(\Omega_1)}\right).
\end{align*}
Moreover, if $\varphi^{(\ell)}(\vec{0}_{n-1},\va/2)\ne\psi^{(\ell)}(\vec{0}_{n-1},-\va/2)$
for some integer $\ell\in \{1,\cdots,n\}$, then there exists a positive constants $C$
independent of $\va$,  such that,
for any $x_n\in (-\va/2,\va/2)$,
\begin{align*}
|\nabla {\sf u}(\vec{0}_{n-1},x_n)
|\geq C\frac{\left|\varphi^{(\ell)}(\vec{0}_{n-1},\va/2)-\psi^{(\ell)}
(\vec{0}_{n-1}, -\va/2)\right|}{\va}.
\end{align*}
\end{corollary}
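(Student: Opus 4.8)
The plan is to obtain Corollary~\ref{cor1} as an immediate specialization of Theorem~\ref{Them1}: the only thing to check is that the Lam\'e system \eqref{lame} is a particular instance of the \emph{general elliptic system} of Definition~\ref{defn1}, with structural constants depending only on $(\lambda_1,\mu_1,n,\gamma)$ and not on $\va$.

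First I would put $\mathcal{L}_{\lambda_1,\mu_1}{\sf u}=\nabla\cdot(\mathbb{C}^0 e({\sf u}))$ into divergence form. Since the elastic tensor \eqref{cijkl} enjoys the minor symmetry $C_{ijkl}=C_{ijlk}$, the symmetrized gradient $e({\sf u})$ inside $\mathbb{C}^0 e({\sf u})$ may be replaced by the full gradient $\nabla{\sf u}$ without changing $\mathbb{C}^0 e({\sf u})$; hence the $i$-th component of $\mathcal{L}_{\lambda_1,\mu_1}{\sf u}$ equals $\sum_{\alpha,\beta,j}\partial_\alpha\big(C_{i\alpha j\beta}\,\partial_\beta u^{(j)}\big)$. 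Thus \eqref{lame} is of the form \eqref{equ1} with the constant coefficients $A_{ij}^{\alpha\beta}:=C_{i\alpha j\beta}$ and $B_{ij}^{\alpha}=C_{ij}^{\beta}=D_{ij}=0$; being constants, these obviously lie in $C^{\gamma}(\Omega_1)$ and obey any uniform bound required of the coefficients of a general elliptic system, while $\varphi,\psi$ are of class $C^{1,\gamma}$ as demanded in \eqref{1.51}.

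Next I would verify the strong ellipticity condition \eqref{LHC}. Inserting \eqref{cijkl} and contracting,
\[
\sum_{\alpha,\beta,i,j}A_{ij}^{\alpha\beta}\xi_{\alpha}\xi_{\beta}\eta_{i}\eta_{j}
=\lambda_1(\xi\cdot\eta)^2+\mu_1|\xi|^2|\eta|^2+\mu_1(\xi\cdot\eta)^2
=(\lambda_1+\mu_1)(\xi\cdot\eta)^2+\mu_1|\xi|^2|\eta|^2
\ \ge\ \mu_1|\xi|^2|\eta|^2,
\]
where the last inequality uses $\mu_1>0$ and $\lambda_1+\mu_1>0$. So \eqref{LHC} holds with $\lambda=\mu_1$. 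With this identification every hypothesis of Theorem~\ref{Them1} is met, and all constants entering the theorem depend only on $(\lambda_1,\mu_1,n,\gamma)$, so the resulting $C$ is independent of $\va$; applying Theorem~\ref{Them1} to ${\sf u}$ then yields at once the claimed upper bound on $\Omega_{1/2}$ and, when $\varphi^{(\ell)}(\vec{0}_{n-1},\va/2)\ne\psi^{(\ell)}(\vec{0}_{n-1},-\va/2)$, the lower bound along $\{(\vec{0}_{n-1},x_n):x_n\in(-\va/2,\va/2)\}$.

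There is essentially no analytic obstacle here, since all the real estimates are quoted from Theorem~\ref{Them1}; the only points requiring a little care are the index bookkeeping in passing from the form $\nabla\cdot(\mathbb{C}^0 e({\sf u}))$ to the divergence form $\partial_\alpha(A_{ij}^{\alpha\beta}\partial_\beta u^{(j)})$ — in particular using the minor symmetry to dispose of the symmetrization — and the observation that \eqref{LHC} asks only for the Legendre--Hadamard (rank-one) inequality, which is exactly what the computation above produces, rather than the stronger pointwise positivity of $\mathbb{C}^0$ on all matrices.
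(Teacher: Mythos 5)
Your proof is correct, but it takes a genuinely different (and more direct) route than the paper's. You reduce Corollary~\ref{cor1} to a pure specialization of Theorem~\ref{Them1}: using the minor/major symmetry of the elastic tensor you rewrite $\nabla\cdot(\mathbb{C}^0 e({\sf u}))$ in the divergence form $\sum_{\alpha,\beta,j}\partial_\alpha(A_{ij}^{\alpha\beta}\partial_\beta u^{(j)})$ with constant $A_{ij}^{\alpha\beta}$, verify the Legendre--Hadamard inequality \eqref{LHC} with $\lambda=\mu_1$ via the identity $\sum A_{ij}^{\alpha\beta}\xi_\alpha\xi_\beta\eta_i\eta_j=(\lambda_1+\mu_1)(\xi\cdot\eta)^2+\mu_1|\xi|^2|\eta|^2$, note that constants trivially satisfy \eqref{AUP}--\eqref{BCD} with $B=C=D\equiv0$, and then invoke Theorem~\ref{Them1} as a black box. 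This reduction is in fact sanctioned by the paper's remark following Definition~\ref{defn1}, which observes exactly that \eqref{LHC} and \eqref{AUP} hold for the Lam\'e system. The paper nevertheless re-derives the result from scratch in Section~\ref{sc3} for $n=2$: it sets up the Lam\'e-specific decomposition \eqref{Lv1}--\eqref{Lv2}, constructs the auxiliary functions \eqref{Lu12}, replaces the general correction matrix $\mathcal{M}_i^{(\alpha)}$ of \eqref{1.12} by the Lam\'e-specific $\mathcal{M}$ of \eqref{L1.9}, and reproves the analogues of Lemmas~\ref{lem2}--\ref{lem1} with the lower-order terms and the $H$-term absent. What that longer route buys is a self-contained record of the specialized Lam\'e estimates and their constants, of independent use in applications; what your shorter route buys is brevity, an explicit logical dependence on Theorem~\ref{Them1}, and immunity from bookkeeping slips --- and indeed the exponents agree: for $n=2$, the exponent $n-\tfrac{2}{1+\gamma}$ in Lemma~\ref{lemmabddlocal} reduces to $\tfrac{2\gamma}{1+\gamma}$, which is the exponent appearing in \eqref{L1.5}.
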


Indeed, the proof of this result is nontrivial.
Under the assumption that the smoothness of the inclusions
 boundaries is $C^{1,\gamma},$ we need to estimate the
 H$\mathrm{\ddot{o}}$lder semi-norm of the gradient of the
 constructed auxiliary function when we use the iteration method to prove that the
 gradient of the auxiliary function is the major singular terms of the gradient of the solution to \eqref{equ1}.
In this paper, we consider the general elliptic system as in Definition \ref{defn1}, so the complexity of the constructed auxiliary function makes it more cumbersome to deal with some parameters in estimating  the H$\mathrm{\ddot{o}}$lder semi-norm, see Proposition \ref{prop1} below.
In addition, compared with \cite{CL}, the coefficients of the elliptic systems here are no longer constant, and the right hand side term is no longer in divergence form, which leads to the complexity of the iteration process,
see Lemmas \ref{lem2}, \ref{lemmabddlocal}, and \ref{lem1} below.

To be precise, this article is organized as follows.

In Section \ref{sc1}, we devoted to proving the  Theorem \ref{Them1}.
Firstly, we give the $C^{1,\gamma}$ estimates and $W^{1,p}$ estimates
required in the iteration process. Then, we obtain the solutions ${\sf v}_{\ell},$ $\ell=1,\cdots,m$
(see \eqref{1.49} below) of $m$ elliptic systems
with relatively simple boundary conditions (see \eqref{vl} below)
by decomposing the solution ${\sf u}$ to \emph{general elliptic system}.
Next, we use a  scalar auxiliary function $\bar{u}$ (see \eqref{1.52} below)
to generate a family of vector-valued auxiliary functions $\wdt{\sf u}_{\ell}$,
whose values is same as ${\sf v}_{\ell}$ on $\Gamma_{1}^{+}$ and $\Gamma_{1}^{-}$ (see \eqref{ul} below).
In order to prove that
the  $\nabla\wdt{\sf u}_{\ell}$ is
the major singular terms of $\nabla {\sf v}_{\ell}$ by using iteration method in \cite{bll,bll2,llby},
it is very important to consider the estimates of the H\"{o}lder semi-norm
of $\nabla \wdt{\sf u}_{\ell}$ in a small region (see Proposition \ref{prop1} below).
Finally,
by using semi-norm estimates,
$C^{1,\gamma}$ estimates and $W^{1,p}$ estimates,
we complete the proof of Theorem \ref{Them1}.

In Section \ref{sc3}, our main task is to prove  Corollary \ref{cor1}.
When the general elliptic systems are simplified to the Lam\'{e} system,
 the pointwise estimates of  the gradient
 is obtained under the assumptions in
Definition \ref{defn1}.

In Section \ref{sec5}, we show the proofs of the Theorem \ref{C1Gamma} and Theorem \ref{them2}
which play a key role in the proof of Theorem \ref{Them1}.
Different from the Theorem 2.3 and 2.4 in \cite{CL},
the  elliptic systems  considered in this paper are
 no longer simple constant coefficients,
  and the right hand side of the systems is no longer in divergence form.
  This section can be regarded as a generalization of \cite[Theorem 2.3 and 2.4]{CL}.
To prove Theorem \ref{C1Gamma}, we first give the interior $C^{1,\gamma}$  estimates
(see Lemma \ref{them5.14} below)
of the solution to the elliptic systems with non divergence
 form at the right hand side,
with the help of the Campanato's approach, Schauder estimates in \cite[Theorem 5.14]{Gia}.
Then we can obtain the boundary $C^{1,\gamma}$ estimates
 (see Corollary \ref{them3} below)
 on half space by using the
 \cite[Theorem 5.21]{Gia} and Lemma \ref{them5.14}.
 Finally, we use them to obtain the estimates
 near the $C^{1,\gamma}$ boundary $\Gamma$
 by using the technology of
locally flattening the boundary.
We prove the $W^{1,p}$ estimates
by applying  the interior $W^{1,p}$ estimates
 and the boundary $W^{1,p}$ estimates
of the upper half space.

Finally, we make some conventions on notation.
Let $\nn:=\{1,2,\ldots\}$ and $\zz_+:=\nn\cup\{0\}$.
 We always denote by $C$ a \emph{positive constant}
which is independent of the main parameters, but it
may vary from line to line.
If $E$ is a subset of $\mathbb{R}^{n}$, we denote by $\chi_{E}$
its characteristic function.
Let $U$ and $V$ be the
open subsets of $\mathbb{R}^n$,
we write $V\subset\subset U$ if
$V\subset\bar{V}\subset U$ and $\bar{V}$ is
compact, and say $V$ is compactly contained in $U$.
The \emph{symbol} $\partial{D}_{i}$ denotes the boundary of $D_{i}$.
For any $x_{0}\in \mathbb{R}^n$ and $r>0$, the \emph{symbol} $B_{r}(x_0)$ denotes
 the open ball with center $x_{0}$, radius $r$ in $\mathbb{R}^n$.

Throughout this article, let
$\Lambda>0$  satisfy for $\alpha,\beta=1,\cdots, n,\, i,j=1,\cdots, m,$
\begin{align}\label{AUP}
\left|A_{ij}^{\alpha\beta}(x)\right|\leq \Lambda,\quad
\forall\, x\in \Omega_1,
\end{align}
and $\kappa_{2},$ $\kappa_{3}>0$ such that
\begin{align}
&\|h_1\|_{C^{1,\gamma}(B'_1)}+\|h_{2}\|_{C^{1,\gamma}(B'_{1})}\leq\kappa_{2},\label{1.50}\\
&\|{\sf A}\|_{C^{\gamma}(\Omega_1)}+\|{\sf B}\|_{C^{\gamma}(\Omega_1)}+\|{\sf C}\|_{C^{\gamma}(\Omega_1)}+\|{\sf D}\|_{C^{\gamma}(\Omega_1)}\leq\kappa_{3},\label{BCD}
\end{align}
where \begin{align*}
\|{\sf A}\|_{C^{\gamma}(\Omega_1)}&:=\max_{ \alp,\bt,i,j}\|A_{ij}^{\alp\bt}(\cdot)\|_{C^{\gamma}(\Omega_1)}, \quad
\|{\sf B}\|_{C^{\gamma}(\Omega_1)}:=\max_{\alp,i,j}\|B_{ij}^{\alp}(\cdot)\|_{C^{\gamma}(\Omega_1)},\\
\|{\sf C}\|_{C^{\gamma}(\Omega_1)}&:=\max_{\bt,i,j}\|C_{ij}^{\bt}(\cdot)\|_{C^{\gamma}(\Omega_1)},\quad
\|{\sf D}\|_{C^{\gamma}(\Omega_1)}:=\max_{i,j}\|D_{ij}(\cdot)\|_{C^{\gamma}(\Omega_1)}.
\end{align*}

\section{The gradient estimates for the general elliptic systems}\label{sc1}
In this section, our main task is to prove Theorem \ref{Them1}.
By using the idea of iteration in  \cite{bll,bll2,llby} and the treatment of $C^{1,\gamma}$
boundary in \cite{CL}, we establish the pointwise upper and lower bounds estimates of the gradient of the solution to general elliptic systems, under the assumption that the smoothness of  partial boundary of the region
is $C^{1,\gamma}.$
Before that, we first give the $C^{1,\gamma}$ estimates and $W^{1,p}$ estimates
required in the iteration process.

\subsection{$C^{1,\gamma}$ estimates and $W^{1,p}$ estimates}
Firstly, we give the definition of \emph{general elliptic type system} in a more general region.
\begin{definition}\label{defn2}
Let $0<\gamma<1$ and  $Q$ be a bounded domain in $\mathbb{R}^{n},$ $n\geq2$, with a $C^{1,\gamma}$
 boundary portion $\Gamma\subset \partial Q$.
Let ${A}_{ij}^{\alp\bt}, {B}_{ij}^{\alpha}$,
${C}_{ij}^{\beta},{D}_{ij},{F}_{i}^{(\alp)}\in C^{\gamma}(Q)$
and ${H}^{(i)}\in L^{\infty}(Q)$
for any
$\alp,\bt=1,\cdots, n,$  $i,j=1,\cdots,m$.
Let the matrix of coefficients $({A}_{ij}^{\alp\bt})$
satisfy  \eqref{LHC} and \eqref{AUP},
and ${B}_{ij}^{\alpha}$,
${C}_{ij}^{\beta},{D}_{ij}$ satisfy \eqref{BCD}.
Then the systems as follows
\begin{align}\label{system}
\left\{
  \begin{array}{ll}
\sum\limits_{\alp,\bt,j}\partial_{\alpha}\left({A}_{ij}^{\alpha\beta}\partial_{\beta}{w}^{(j)}
+{B}_{ij}^{\alpha}{w}^{(j)}\right)
+{C}_{ij}^{\beta}\partial_{\beta}
{w}^{(j)}+{D}_{ij}{w}^{(j)}={H}^{(i)}
-\partial_{\alpha} {F}_{i}^{(\alpha)}, & \hbox{in}~Q, \\
    {\sf w}:=({w}^{(1)},\cdots,{w}^{(m)})={\sf 0 }, & \hbox{on}~\Gamma,
  \end{array}
\right.
\end{align}
is called a \emph{general elliptic type system}.
\end{definition}

 The $C^{1,\gamma}$ estimates for the general elliptic type system as in Definition \ref{defn2} is:
\begin{theorem}\label{C1Gamma}($C^{1,\gamma}$ estimates)
Let $\kappa_{3}$ be as in \eqref{BCD} and ${\sf w}=({w}^{(1)},\cdots,{w}^{(m)})
\in W^{1,2}(Q\subset\mathbb{R}^n;\mathbb{R}^{m})\cap C^{1}(Q \cup \Gamma;\mathbb{R}^{m})$ be the
weak solution to the \emph{general elliptic type system} as in
 Definition \ref{defn2}. Then, for any
$Q'\subset\subset Q\cup\Gamma$,
there exists a positive constant $C$ depending on $n, m, \kappa_3, \gamma,
Q',Q,$ such that,
\begin{align}\label{WC1gamma}
\|{\sf w}\|_{C^{1,\gamma}(Q')}\leq C\left(\|{\sf w}\|_{L^{\infty}(Q)}+[{\sf F}]_{\gamma, Q}
+   \|{\sf H}\|_{L^{\infty}(Q)} \right).
\end{align}
\end{theorem}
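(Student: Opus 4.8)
The plan is to prove Theorem~\ref{C1Gamma} by a standard Campanato/Schauder approach, reducing the variable-coefficient, non-divergence-right-hand-side problem to the constant-coefficient interior and boundary estimates available in the literature (Giaquinta, \cite[Theorems 5.14, 5.21]{Gia}), via freezing the coefficients and a local flattening of the $C^{1,\gamma}$ portion $\Gamma$. First I would treat the interior estimate: around a point $x_0\in Q$ I freeze $A_{ij}^{\alpha\beta}(x_0)$ and write the system as a constant-coefficient system whose right-hand side absorbs the perturbation terms $\partial_\alpha\big((A_{ij}^{\alpha\beta}(x_0)-A_{ij}^{\alpha\beta}(x))\partial_\beta w^{(j)}\big)$, the lower-order terms $B_{ij}^\alpha w^{(j)}$, $C_{ij}^\beta\partial_\beta w^{(j)}$, $D_{ij}w^{(j)}$, and the data $H^{(i)}$, $-\partial_\alpha F_i^{(\alpha)}$. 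One then compares $w$ with the solution of the homogeneous frozen system on balls $B_\rho\subset B_R$, uses the decay estimates for the comparison solution together with the $C^\gamma$-modulus of continuity of the coefficients (which controls the perturbation term by $R^\gamma$ times the Dirichlet energy on $B_R$), and runs the usual iteration lemma for the quantity $\rho^{-n-2}\int_{B_\rho}|\nabla w-(\nabla w)_{B_\rho}|^2$. The conclusion is that $\nabla w$ lies in the Campanato space $\mathcal L^{2,n+2\gamma}$ locally, hence $w\in C^{1,\gamma}_{\mathrm{loc}}$, with the stated bound in terms of $\|w\|_{L^\infty}$, $[F]_{\gamma}$ and $\|H\|_{L^\infty}$. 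This is essentially Lemma~\ref{them5.14} quoted later in the paper, so I would cite it at this stage.

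Next I would handle the boundary estimate near a flat portion of $\Gamma$. Assume $\Gamma\cap B_R(x_0)$ lies in the hyperplane $\{x_n=0\}$ and $w=0$ there; I freeze the coefficients at $x_0$ as before and invoke the half-space Schauder estimate \cite[Theorem 5.21]{Gia} for the constant-coefficient Dirichlet problem, combining it with the interior estimate to control the full gradient up to the flat boundary on a smaller half-ball. The perturbation and lower-order terms are absorbed exactly as in the interior case, since the vanishing boundary condition is preserved under the decomposition into a homogeneous comparison solution plus a remainder with zero boundary data. This yields Corollary~\ref{them3}: the $C^{1,\gamma}$ estimate on half-balls adjacent to a flat boundary piece.

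The final step is to remove the flatness assumption by a local change of variables. Since $\Gamma$ is $C^{1,\gamma}$, near any $x_0\in\Gamma$ there is a $C^{1,\gamma}$ diffeomorphism $\Phi$ straightening $\Gamma$ to $\{y_n=0\}$; the pulled-back system is again of the form \eqref{system} with new coefficients that are only $C^\gamma$ (the Jacobian of $\Phi$ is $C^{0,\gamma}$, which is exactly why $C^{1,\gamma}$ regularity of the boundary, and no more, suffices), new $C^\gamma$ right-hand side in divergence form, and new $L^\infty$ zeroth-order data, and it still satisfies the strong ellipticity \eqref{LHC} and the bounds \eqref{AUP}, \eqref{BCD} with constants depending only on $n,m,\gamma,\kappa_3$ and the $C^{1,\gamma}$ norm of the chart. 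Applying the flat-boundary estimate to the transformed problem and transforming back, then covering $Q'\cap\Gamma$ by finitely many such charts and $Q'\setminus\Gamma$-neighborhoods by interior balls, and summing, gives \eqref{WC1gamma}. The main obstacle is bookkeeping rather than conceptual: one must verify carefully that after flattening the right-hand side genuinely retains the structure $H^{(i)}-\partial_\alpha F_i^{(\alpha)}$ with $F\in C^\gamma$ and $H\in L^\infty$ — the non-divergence term $C_{ij}^\beta\partial_\beta w^{(j)}$ and the term $B_{ij}^\alpha w^{(j)}$ generate, under the change of variables, extra contributions that must be re-sorted into the two allowed types, and one must track that all constants stay independent of $\varepsilon$ and depend on the data only through $\kappa_3$ and $Q',Q$. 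Keeping the perturbation term's smallness (the $R^\gamma$ factor) uniform through the covering argument is the point that requires the most care.
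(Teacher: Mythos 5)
Your proposal follows essentially the same route as the paper's proof: freezing the leading coefficients and comparing with the homogeneous constant-coefficient problem to obtain the interior Campanato estimate (Lemma~\ref{them5.14}), invoking \cite[Theorem 5.21]{Gia} for the half-space boundary case (Corollary~\ref{them3}), then straightening $\Gamma$ by a $C^{1,\gamma}$ chart, verifying the transformed system keeps the structure \eqref{system} with $C^\gamma$ leading coefficients and admissible $F,H$, and finishing by interpolation and a finite covering. The one step you gloss over but the paper spells out is the absorption of the $R^\gamma[\nabla w]_{\gamma}$ term on the right into the left-hand side via the interpolation inequality \cite[Lemma 6.35]{gt} before summing over the covering, which is exactly the ``most care'' point you flag at the end; otherwise the argument matches.
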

Here and thereafter,
the H$\mathrm{\ddot{o}}$lder semi-norm of
the matrix-valued function ${\sf F}=({F}_{i}^{(\alpha)})_{i, \alpha}$
be defined as
$
[{\sf F}]_{\gamma,Q}:=
\max_{i,\alpha}[{F}_{i}^{(\alpha)}]_{\gamma, Q},
$
the $L^{\infty}$ norm of the vector-valued function ${\sf H}:=({H}^{(1)},\cdots,{H}^{(m)})$
be defined as
$\|{\sf H}\|_{L^{\infty}(Q)}:=\max_{i}\|{H}^{(i)}\|_{L^{\infty}(Q)}.$

The $W^{1,p}$ estimates for the general elliptic type system as in Definition \ref{defn2} is:
\begin{theorem}\label{them2}($W^{1,p}$ estimates)
Let $0<\gamma<1$ and ${\sf w}=({w}^{(1)},\cdots,{w}^{(m)})
\in W^{1,2}(Q\subset\mathbb{R}^n;\mathbb{R}^{m})\cap C^{1}(Q \cup \Gamma;\mathbb{R}^{m})$ be the
weak solution of the \emph{general elliptic  type system} as in Definition \ref{defn2}.
 Then, for any $2\leq p<\infty$ and any domain $Q^\prime \subset\subset Q \cup \Gamma$,
there exists a positive constant $C$ depending on $\lam, \kappa_3, p, Q', Q$, such that,
\begin{equation}\label{1.39}
\|{\sf w}\|_{W^{1, p}(Q^\prime)}\leq C(\|{\sf w}\|_{L^{2}(Q)}+[{\sf F}]_{\gamma, Q}
+\|{\sf H}\|_{L^{\infty}(Q)}).
\end{equation}
In particular, if $p> n$, there exists a positive constant $C$
depending on $\lam, \tau, \kappa_3, p, Q', Q$, such that, for any $0<\tau\leq 1-n/p$,
\begin{equation*}
\|{\sf w}\|_{C^{\tau}(Q')}\leq C(\|{\sf w}\|_{L^{2}(Q)}
+[{\sf F}]_{\gamma, Q}+\|{\sf H}\|_{L^{\infty}(Q)}).
\end{equation*}
\end{theorem}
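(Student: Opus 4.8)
The plan is to prove the $W^{1,p}$ estimate \eqref{1.39} by the now-standard combination of interior and boundary $W^{1,p}$ estimates, reducing the general elliptic type system to one with the same principal part but cleaner lower-order structure, and then obtaining the $C^\tau$ statement from Morrey/Sobolev embedding. First I would fix a domain $Q''$ with $Q' \subset\subset Q'' \subset\subset Q \cup \Gamma$ and a cutoff $\eta \in C_c^\infty$ with $\eta \equiv 1$ on $Q'$ and $\supp \eta \subset Q''$; the function $\mathsf{w}$ is already known to lie in $C^1(Q\cup\Gamma)$, so by Theorem \ref{C1Gamma} (applied on $Q''$, say) we have $\|\mathsf{w}\|_{C^{1,\gamma}(Q'')} \le C(\|\mathsf{w}\|_{L^\infty(Q)} + [\mathsf{F}]_{\gamma,Q} + \|\mathsf{H}\|_{L^\infty(Q)})$, and $\|\mathsf{w}\|_{L^\infty(Q)}$ is in turn controlled by $\|\mathsf{w}\|_{L^2(Q)} + [\mathsf F]_{\gamma,Q} + \|\mathsf H\|_{L^\infty(Q)}$ by the same theorem together with a covering argument. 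So on $Q''$ we may treat $\mathsf w$, $\nabla \mathsf w$ as bounded, and the plan is to absorb the lower-order terms $B_{ij}^\alpha w^{(j)}$, $C_{ij}^\beta \partial_\beta w^{(j)}$, $D_{ij} w^{(j)}$ into the data: set $\tilde F_i^{(\alpha)} := F_i^{(\alpha)} - B_{ij}^\alpha w^{(j)}$ and $\tilde H^{(i)} := H^{(i)} - C_{ij}^\beta \partial_\beta w^{(j)} - D_{ij} w^{(j)}$, both of which are in $C^\gamma(Q'') \subset L^\infty(Q'')$ with norms bounded by the right-hand side of \eqref{1.39}, using \eqref{BCD} and \eqref{AUP}.

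Next, I would localize: $\mathsf{v} := \eta \mathsf{w}$ solves a system $\partial_\alpha(A_{ij}^{\alpha\beta}\partial_\beta v^{(j)}) = \tilde{\mathsf H}' - \partial_\alpha \tilde{\mathsf F}'$ on $Q''$ (extended by zero outside), where the new right-hand sides pick up extra terms involving $\nabla \eta$, $\nabla^2 \eta$ multiplied by $\mathsf w$ and $\nabla \mathsf w$ — again bounded, hence in $L^p$ for every $p$ — and $\mathsf v$ vanishes near the relevant piece of $\partial Q''$ (either the flattened portion of $\Gamma$ or a manufactured Dirichlet boundary). Here I would invoke the classical $W^{1,p}$ theory for second-order elliptic systems in divergence form with $C^\gamma$ (in particular continuous) principal coefficients satisfying the strong ellipticity condition \eqref{LHC}: interior $W^{1,p}$ estimates plus boundary $W^{1,p}$ estimates on the half-space for the Dirichlet problem, glued by the flattening-the-boundary technique already used for Theorem \ref{C1Gamma}. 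This gives $\|\mathsf v\|_{W^{1,p}(Q'')} \le C(\|\mathsf v\|_{L^p(Q'')} + \|\tilde{\mathsf F}'\|_{L^p(Q'')} + \|\tilde{\mathsf H}'\|_{L^p(Q'')})$ for every $2 \le p < \infty$, and restricting to $Q'$ and tracing the constants through the reductions above yields \eqref{1.39}. Finally, for $p > n$, Morrey's embedding $W^{1,p}(Q') \hookrightarrow C^{\tau}(Q')$ for $\tau \le 1 - n/p$ (after possibly shrinking $Q'$ slightly, or applying it on $Q''$ and restricting) gives the stated $C^\tau$ bound with the same right-hand side.

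The main obstacle, and the place where some care is genuinely needed, is the bootstrapping of regularity of $\mathsf w$ itself: a priori $\mathsf w \in W^{1,2} \cap C^1$, and to run the absorption step I need $\nabla \mathsf w$ bounded on the relevant interior/boundary neighborhood, which is exactly what Theorem \ref{C1Gamma} supplies — so the logical order must be (i) $L^2 \to L^\infty$ for $\mathsf w$, (ii) $C^{1,\gamma}$ for $\mathsf w$ via Theorem \ref{C1Gamma}, (iii) then the $W^{1,p}$ machinery on the localized problem. One must also be attentive to the fact that the right-hand side of \eqref{system} is \emph{not} purely in divergence form (the $\mathsf H$ term), so the boundary $W^{1,p}$ estimate invoked must be the version allowing an $L^p$ (indeed $L^\infty$) right-hand side in addition to the $\Div \mathsf F$ term; this is standard but should be cited precisely (e.g. the half-space Dirichlet estimates underlying \cite[Chapter 5]{Gia}, as already used for Theorem \ref{C1Gamma} and \ref{them2}'s companion results). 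A minor bookkeeping point is that each localization/flattening step shrinks the domain, so one should chain finitely many nested domains $Q' \subset\subset Q'' \subset\subset Q''' \subset\subset Q\cup\Gamma$ and absorb the loss into the constant $C$, which is harmless since $C$ is already allowed to depend on $Q'$ and $Q$.
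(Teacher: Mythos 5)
Your route is genuinely different from the paper's, and it contains a gap at the step you treated as a footnote. You propose to deduce the $W^{1,p}$ estimate as a corollary of Theorem \ref{C1Gamma}: first upgrade $\mathsf{w}$ to $C^{1,\gamma}(Q'')$ with norm controlled by $\|\mathsf w\|_{L^\infty(Q)}+[\mathsf F]_{\gamma,Q}+\|\mathsf H\|_{L^\infty(Q)}$, then absorb the lower-order terms into $\widetilde{\mathsf F},\widetilde{\mathsf H}$, and localize. The problem is the sentence ``$\|\mathsf w\|_{L^\infty(Q)}$ is in turn controlled by $\|\mathsf w\|_{L^2(Q)}+\cdots$ by the same theorem together with a covering argument.'' Theorem \ref{C1Gamma} bounds $\|\mathsf w\|_{C^{1,\gamma}(Q')}$ in terms of $\|\mathsf w\|_{L^\infty(Q)}$; a covering argument moves between subdomains but cannot by itself convert a sup-norm on the right into an $L^2$-norm. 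The $L^2\to L^\infty$ gain is precisely the nontrivial content you are trying to prove --- indeed, the second assertion of Theorem \ref{them2} (take $p>n$, $\tau$ small) \emph{is} such a bound, so as written your argument is circular. What would rescue it is a genuinely different mechanism: either a De Giorgi--Nash--Moser type $L^2\to L^\infty$ bound for the system, or the interpolation-plus-iteration-lemma device on a nested family $Q_\rho$, using $\|\mathsf w\|_{L^\infty(Q_{\rho'})}\le\epsilon[\nabla\mathsf w]_{\gamma,Q_{\rho'}}+C(\epsilon)\|\mathsf w\|_{L^2(Q_{\rho'})}$ together with the $\rho'-\rho$ dependence of the constant in Theorem \ref{C1Gamma} and a Simon-type iteration lemma (cf.\ \cite[Lemma 5.13]{Gia}). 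You gesture at interpolation later, but never set this up, and it is the crux.

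There is also a structural oddity worth noting: if the $L^2\to C^{1,\gamma}$ bound on $Q''$ were already in hand, as your first paragraph claims, then the desired $W^{1,p}(Q')$ estimate would follow in one line from $\|\mathsf w\|_{W^{1,p}(Q')}\le |Q'|^{1/p}\|\mathsf w\|_{C^1(Q')}$, and the entire localization/freezing-of-coefficients/half-space $W^{1,p}$ machinery you invoke afterwards would be unnecessary. The fact that you reach for it anyway is a signal that the premise is doing too much work. The paper avoids the issue altogether: it never invokes Theorem \ref{C1Gamma} inside the proof of Theorem \ref{them2}. Instead it freezes coefficients, converts the non-divergence term $\mathsf H$ into divergence form by solving $-\Delta v^{(i)}=T^{(i)}$, sets up a contraction mapping via the constant-coefficient $L^p$ theory of \cite[Theorems 7.1--7.2]{Gia}, and then \emph{bootstraps the integrability exponent} $2\to 2^*\to 2^{**}\to\cdots$ via Sobolev embedding until $p$ is reached, finally combining with a Caccioppoli inequality to replace $\|\mathsf w\|_{W^{1,2}}$ by $\|\mathsf w\|_{L^2}$ on the right. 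This never needs an $L^\infty$ bound on $\mathsf w$, and is why the two theorems can sit side by side and be applied together (as they are in Lemmas \ref{lem2} and \ref{lem1}) without any circularity. If you want to keep your shorter architecture, you must supply the $L^2\to L^\infty$ step honestly; otherwise, adopt the exponent-bootstrapping route.
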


For readers' convenience, the proofs of Theorem \ref{C1Gamma} and Theorem \ref{them2} are given in Section \ref{sec5}.

\subsection{The proof of Theorem \ref{Them1}}

\begin{definition}
Let
$A_{ij}^{\alp\bt}, B_{ij}^{\alp}, C_{ij}^{\bt},D_{ij},$ $\alp,\bt=1,\cdots,n,$ $i,j=1,\cdots, m$, be as in
Definition \ref{defn1} and $\varphi,$ $\psi$ be as in \eqref{1.51}.
Let
$${\sf v}_{\ell}=(v_{\ell}^{(1)}, v_{\ell}^{(2)},\cdots,v_{\ell}^{(m)}),~~~~
\ell=1,2,\cdots, m$$
 with $v_{\ell}^{(j)}=0$ for $j\ne \ell,$
$v_{\ell}^{(j)}=u^{(\ell)}$ for $j=\ell$,
 and be a weak solution of the following
boundary value problem:
\begin{align}\label{vl}
\left\{
  \begin{array}{ll}
    \sum_{\alp,\bt,j}\partial_{\alpha}\left(A_{ij}^{\alpha\beta}\partial_{\beta}v_{\ell}^{(j)}
    +B_{ij}^{\alpha}v_{\ell}^{(j)}\right)
+C_{ij}^{\beta}\partial_{\beta}v_{\ell}^{(j)}+D_{ij}v_{\ell}^{(j)}=0,&\,\hbox{in}~\Omega_1,
 \\
   {\sf v}_{\ell}=\left( 0,\cdots,0,\varphi^{(\ell)},0,\cdots,0 \right), &\,\hbox{on}~\Gamma_{1}^{+}, \\
   {\sf v}_{\ell}=\left( 0,\cdots,0,\psi^{(\ell)},0,\cdots,0 \right), &\, \hbox{on}~\Gamma_{1}^{-}.
  \end{array}
\right.
\end{align}
\end{definition}
It follows from Definition \ref{defn1} that
for the solution ${\sf u}=(u^{(1)},\cdots,u^{(m)})$ of
\emph{general elliptic system} as in Definition \ref{defn1}, one has
\begin{align}
\sf{u}={v_1}+{v_2}+\cdots+{v_{m}}\quad{and}\quad
\nabla {\sf u}=\sum\limits_{\ell=1}^m\nabla {\sf v}_{\ell}~~ \mbox{in}\, \Omega_{1}.\label{1.49}
\end{align}

In order to estimate $|\nabla {\sf v}_{\ell}|,$ $\ell=1,\cdots, m$,
we introduce a scalar function $\bar {u}\in C^{1,\gamma}(\mathbb{R}^{n})$ such that
$\bar {u}=1$ on $\Gamma_{1}^{+},$  $\bar{u}=0$ on $\Gamma_{1}^{-}$ and
\begin{align}\label{1.52}
\bar{u}(x):=\frac{x_{n}-h_{2}(x')+\varepsilon/2}{\varepsilon+h_1(x')-h_2(x')},\quad x\in\Omega_{1},
\end{align}
where $h_{1},$ $h_{2}\in C^{1,\gamma}(B'_1)$ satisfy \eqref{1.42}-\eqref{h'}.

By a direct calculation, we obtain that for $x:=(x',x_{n})\in \Omega_{1}$,
\begin{align}\label{baru}
|\partial_{\alpha} \bar{u}(x)|\leq \frac{C|x'|^{\gamma}}{\va+|x'|^{1+\gamma}},\,\, \alpha=1,2,\cdots,n-1,\quad
\partial_{n}\bar{u}(x)=\frac{1}{\delta(x')},
\end{align}
where
\begin{align}\label{delta}
\delta(x'):=\varepsilon+h_1(x')-h_2(x'),\quad\hbox{in}\,\Omega_1.
\end{align}
Using $\bar{u}$ to define a family of vector-valued auxiliary functions, for $\ell=1,2,\cdots,m$, we define
\begin{align}\label{ul}
 \wdt{\sf u}_{\ell}:=\Big(0,\cdots,0,\varphi^{(\ell)}(x',\frac{\varepsilon}{2}+h_{1}(x'))\bar{u}(x)
+\psi^{(\ell)}(x',-\frac{\varepsilon}{2}+h_2(x'))(1-\bar{u}(x)),0,\cdots,0\Big).
\end{align}
It is obvious that $\wdt{\sf u}_{\ell}=( 0,\cdots,0,\varphi^{(\ell)},0,\cdots,0)$ on
$\Gamma_{1}^{+}$
and $\wdt{\sf u}_{\ell}=( 0,\cdots,0,\psi^{(\ell)},0,\cdots,0)$ on $\Gamma_{1}^{-}$.

In view of \eqref{baru} and \eqref{ul}, for any $x\in \Omega_{1}$ and  $\alpha=1,2,\cdots,n-1$,
\begin{align}\label{par_1}
\left|\partial_{\alpha}\tilde{\sf u}_{\ell}(x)\right|&\leq\frac{C|x'|^{\gamma}}{\va+|x'|^{1+\gamma}}\left|\varphi^{(\ell)}(x',\varepsilon/2+h_{1}(x'))
-\psi^{(\ell)}(x',-\varepsilon/2+h_2(x'))\right|\nonumber\\
&+C\left(\|\nabla\varphi^{(\ell)}\|_{L^{\infty}(\Gamma_1^+)}+\|\nabla\psi^{(\ell)}\|_{L^{\infty}(\Gamma_1^-)}\right),
\end{align}
and
\begin{align}
\begin{aligned}\label{par_n}
&\frac{\left|\varphi^{(\ell)}(x',\varepsilon/2+h_{1}(x'))
-\psi^{(\ell)}(x',-\varepsilon/2+h_2(x'))\right|}{C(\va+|x'|^{1+\gamma})}\\
&\leq|\ptl_n\wdt{\sf u}_{\ell}(x)|\leq\frac{C\left|\varphi^{(\ell)}(x',\varepsilon/2+h_{1}(x'))
-\psi^{(\ell)}(x',-\varepsilon/2+h_2(x'))\right|}{\va+|x'|^{1+\gamma}}.
\end{aligned}
\end{align}
Next, we estimate  $|\nabla{\sf v}_{\ell}|,$ $\ell=1,\cdots,m$.
Let
\begin{align}\label{1.53}
{\sf w}_{\ell}={\sf v}_{\ell}-{\wdt{\sf u}}_{\ell},\quad \ell=1,2,\cdots,m,
\end{align}
then, by \eqref{vl} and \eqref{ul} we have ${\sf w}_{\ell}=(w_{\ell}^{(1)}, w_{\ell}^{(2)},\cdots,w_\ell^{(m)})$ satisfies
\begin{align}\label{equ_w}
\begin{aligned}
\left\{
  \begin{array}{ll}
\sum\limits_{\alp,\bt,j}
\left(\partial_{\alpha}\Big(A_{ij}^{\alpha\beta}\partial_{\beta}w_{\ell}^{(j)}
+B_{ij}^{\alpha}w_{\ell}^{(j)}\Big)
+C_{ij}^{\beta}\partial_{\beta}w_{\ell}^{(j)}+D_{ij}w_{\ell}^{(j)}
\right)=H^{(i)}-\sum\limits_{\alp}\partial_{\alpha} F_{i}^{\alpha}, & \,\hbox{in}\,\Omega_{1} \\
   { \sf w}_{\ell}=0, &\,\hbox{on}\,\Gamma_{1}^{\pm},
  \end{array}
\right.
\end{aligned}
\end{align}
where for any $1\leq \alp\leq n,$ $1\leq i\leq m$,
\begin{align}\label{fh}
\begin{aligned}
F_{i}^{(\alpha)}(x):
&=\sum\limits_{\bt,j}\left(A_{ij}^{\alpha\beta}(x)\partial_{\beta}\widetilde{u}_{\ell}^{(j)}(x)
+B_{ij}^{\alpha}(x)\widetilde{u}_{\ell}^{(j)}(x)\right),\\
 H^{(i)}(x):&=\sum\limits_{\bt,j}\left(-C_{ij}^{\beta}\partial_{\beta}\widetilde{u}_{\ell}^{(j)}(x)
 -D_{ij}(x)\widetilde{u}_{\ell}^{(j)}(x)\right).
\end{aligned}
\end{align}

\begin{lemma}\label{lem2}
Let ${\sf v}_{\ell}\in W^{1,2}
(\Omega_{1}\subset\mathbb{R}^{n}; \mathbb{R}^{m}),$ $1\leq\ell\leq m$, be a weak solution of \eqref{vl}
in the sense of \eqref{1.54},
thus ${\sf w}_{\ell}$  as in \eqref{1.53} satisfies \eqref{equ_w}.
Let $\varphi,$ $\psi$ be as in \eqref{1.51}, $\Gamma_{1}^{+}$, $\Gamma_{1}^{-}$
be as in \eqref{boundary} and $\Omega_{r},$ $r\in\{1/2,1\}$ be as in \eqref{Omega}.
Then exists a positive constant $C$ independent of $\va,$ $\ell$,
such that, for any $\ell=1,\cdots,m$,
\begin{align*}
\int_{\Omega_{1/2}}\left|\nabla {\sf w}_{\ell}\right|^{2}\,dx
\leq
 C\left(
 \left\|{\sf w}_{\ell}\right\|_{L^{2}(\Omega_{1})}^{2}
+\|\varphi^{(\ell)}\|^2_{C^{1,\gamma}(\Gamma_{1}^{+})}
+\|\psi^{(\ell)}\|_{C^{1,\gamma}(\Gamma_1^{-})}^2\right).
\end{align*}
\end{lemma}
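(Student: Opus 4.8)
The plan is to derive a Caccioppoli-type energy inequality for the system \eqref{equ_w} on the nested domains $\Omega_{1/2}\subset\subset\Omega_1$, and then control the resulting right-hand side using the explicit bounds \eqref{par_1}, \eqref{par_n} on $\nabla\wdt{\sf u}_\ell$ together with the $C^{1,\gamma}$ data bounds. First I would fix a cutoff function $\zeta\in C_c^\infty(\Omega_1)$ with $\zeta\equiv 1$ on $\Omega_{1/2}$, $0\le\zeta\le 1$, and $|\nabla\zeta|\le C$; since ${\sf w}_\ell=0$ on $\Gamma_1^\pm$, the test function $\phi^{(i)}=\zeta^2 w_\ell^{(i)}$ lies in $W_0^{1,2}(\Omega_1;\mathbb R^m)$ and is admissible in the weak formulation of \eqref{equ_w}. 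Plugging this in, the leading term $\sum_{\alp,\bt,j}\int A_{ij}^{\alp\bt}\ptl_\bt w_\ell^{(j)}\,\ptl_\alp(\zeta^2 w_\ell^{(i)})$ splits into $\int A_{ij}^{\alp\bt}\ptl_\bt w_\ell^{(j)}\ptl_\alp w_\ell^{(i)}\,\zeta^2$, which is bounded below by $\lambda\int\zeta^2|\nabla{\sf w}_\ell|^2$ using the strong ellipticity \eqref{LHC}, plus a cross term $2\int A_{ij}^{\alp\bt}\ptl_\bt w_\ell^{(j)}\,w_\ell^{(i)}\,\zeta\,\ptl_\alp\zeta$ which is absorbed by Young's inequality at the cost of $C\int|\nabla\zeta|^2|{\sf w}_\ell|^2\le C\|{\sf w}_\ell\|_{L^2(\Omega_1)}^2$.

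Next I would handle the lower-order terms on the left (those with $B_{ij}^\alp$, $C_{ij}^\bt$, $D_{ij}$): using \eqref{BCD}, \eqref{AUP} and repeated applications of Young's inequality $ab\le \tfrac{\lambda}{8}a^2+C b^2$, each of these contributes a term absorbable into $\tfrac{\lambda}{2}\int\zeta^2|\nabla{\sf w}_\ell|^2$ plus a multiple of $\|{\sf w}_\ell\|_{L^2(\Omega_1)}^2$. For the right-hand side $\int(H^{(i)}-\sum_\alp\ptl_\alp F_i^{(\alpha)})\zeta^2 w_\ell^{(i)}$: the $H^{(i)}$ term is estimated directly by $\|{\sf H}\|_{L^\infty}\|{\sf w}_\ell\|_{L^2}$; for the divergence term I would integrate by parts to move $\ptl_\alp$ off $F_i^{(\alpha)}$ onto $\zeta^2 w_\ell^{(i)}$, producing $\int F_i^{(\alpha)}\ptl_\alp w_\ell^{(i)}\zeta^2 + 2\int F_i^{(\alpha)} w_\ell^{(i)}\zeta\ptl_\alp\zeta$, the first again absorbed via Young into $\tfrac{\lambda}{8}\int\zeta^2|\nabla{\sf w}_\ell|^2 + C\int|{\sf F}|^2$, the second bounded by $C(\|{\sf F}\|_{L^2(\Omega_1)}^2+\|{\sf w}_\ell\|_{L^2(\Omega_1)}^2)$. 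Collecting everything, absorbing all $\int\zeta^2|\nabla{\sf w}_\ell|^2$ pieces to the left, and using $\zeta\equiv1$ on $\Omega_{1/2}$, I obtain
\[
\int_{\Omega_{1/2}}|\nabla{\sf w}_\ell|^2\,dx\le C\Big(\|{\sf w}_\ell\|_{L^2(\Omega_1)}^2+\|{\sf F}\|_{L^2(\Omega_1)}^2+\|{\sf H}\|_{L^\infty(\Omega_1)}^2\Big).
\]

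The final step is to estimate $\|{\sf F}\|_{L^2(\Omega_1)}$ and $\|{\sf H}\|_{L^\infty(\Omega_1)}$ in terms of the $C^{1,\gamma}$ norms of $\varphi^{(\ell)}$ and $\psi^{(\ell)}$. By \eqref{fh} and \eqref{AUP}, \eqref{BCD}, one has $|{\sf F}(x)|\le C(|\nabla\wdt{\sf u}_\ell(x)|+|\wdt{\sf u}_\ell(x)|)$ and $|{\sf H}(x)|\le C(|\nabla\wdt{\sf u}_\ell(x)|+|\wdt{\sf u}_\ell(x)|)$. Here $|\wdt{\sf u}_\ell|\le\|\varphi^{(\ell)}\|_{L^\infty}+\|\psi^{(\ell)}\|_{L^\infty}$ trivially, and for $\nabla\wdt{\sf u}_\ell$ I would invoke \eqref{par_1} and \eqref{par_n}: the first terms there are of the form $\tfrac{C|x'|^\gamma}{\va+|x'|^{1+\gamma}}|\varphi^{(\ell)}(x',\cdot)-\psi^{(\ell)}(x',\cdot)|$ and $\tfrac{C}{\va+|x'|^{1+\gamma}}|\varphi^{(\ell)}(x',\cdot)-\psi^{(\ell)}(x',\cdot)|$. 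The key observation is that by \eqref{h}, \eqref{1.42} and a Taylor/mean-value estimate using $\nabla h_i(\vec 0_{n-1})=0$ together with $h_i\in C^{1,\gamma}$, the vertical gap satisfies $|(\tfrac\va2+h_1(x'))-(-\tfrac\va2+h_2(x'))|=\delta(x')\le C(\va+|x'|^{1+\gamma})$, so $\tfrac{1}{\va+|x'|^{1+\gamma}}|\varphi^{(\ell)}(x',\tfrac\va2+h_1(x'))-\psi^{(\ell)}(x',-\tfrac\va2+h_2(x'))|$ is bounded, after adding and subtracting $\varphi^{(\ell)}(x',-\tfrac\va2+h_2(x'))$, by $C(\|\nabla\varphi^{(\ell)}\|_{L^\infty(\Gamma_1^+)}+[\varphi^{(\ell)}]_{C^{0,1}}+\|\varphi^{(\ell)}-\psi^{(\ell)}\|_{L^\infty}/(\va+|x'|^{1+\gamma}))$ — more carefully, one uses that $\varphi^{(\ell)},\psi^{(\ell)}$ are traces of functions that agree in the appropriate sense, or simply bounds $|\varphi^{(\ell)}(x',\tfrac\va2+h_1(x'))-\psi^{(\ell)}(x',-\tfrac\va2+h_2(x'))|$ by $C\,\delta(x')\,(\|\nabla\varphi^{(\ell)}\|_{L^\infty}+\|\nabla\psi^{(\ell)}\|_{L^\infty}) + |\varphi^{(\ell)}-\psi^{(\ell)}|$ evaluated along the top, the latter being $\le C\|\varphi^{(\ell)}\|_{C^{1,\gamma}}+C\|\psi^{(\ell)}\|_{C^{1,\gamma}}$ uniformly on $\Omega_{1/2}$ provided the data are compatible; in any case $\|{\sf F}\|_{L^\infty(\Omega_1)}+\|{\sf H}\|_{L^\infty(\Omega_1)}\le C(\|\varphi^{(\ell)}\|_{C^{1,\gamma}(\Gamma_1^+)}+\|\psi^{(\ell)}\|_{C^{1,\gamma}(\Gamma_1^-)})$, and since $|\Omega_1|$ is bounded, $\|{\sf F}\|_{L^2(\Omega_1)}$ is controlled by the same quantity. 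Substituting into the energy inequality gives the claimed bound.

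I expect the main obstacle to be precisely this last step: tracking that $\nabla\wdt{\sf u}_\ell$ is genuinely in $L^2$ (indeed $L^\infty$) with a bound by the $C^{1,\gamma}$ data, which hinges on the geometric estimate $\delta(x')\le C(\va+|x'|^{1+\gamma})$ — a consequence of \eqref{h} and $h_i\in C^{1,\gamma}$ — to kill the apparently singular denominators in \eqref{par_1} and \eqref{par_n}. The energy/Caccioppoli part is standard but one must be careful that the non-divergence right-hand side term $H^{(i)}$ is only in $L^\infty$, not better, so it is essential that the test function $\zeta^2 w_\ell^{(i)}$ is used (not $\zeta^2$ times a derivative) and that the domain has finite measure; no higher integrability of ${\sf w}_\ell$ is needed at this stage since Theorem \ref{C1Gamma} and Theorem \ref{them2} are invoked later in the iteration, not here.
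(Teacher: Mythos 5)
Your first two paragraphs — the Caccioppoli estimate with a cutoff $\zeta$ and the absorption of the lower-order and source terms — reproduce in cutoff form what the paper does by integrating directly over $\Omega_{1/2}$ and controlling the resulting lateral boundary integral \eqref{1.35} via the interior estimates of Theorems~\ref{C1Gamma} and~\ref{them2}. That part is sound (the two presentations are essentially equivalent), and you correctly arrive at the intermediate inequality
$\int_{\Omega_{1/2}}|\nabla{\sf w}_\ell|^2\le C\bigl(\|{\sf w}_\ell\|^2_{L^2(\Omega_1)}+\|{\sf F}\|^2_{L^2(\Omega_1)}+\|{\sf H}\|^2_{L^\infty(\Omega_1)}\bigr)$.

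The gap is the final step: the claim that $\|{\sf F}\|_{L^\infty(\Omega_1)}+\|{\sf H}\|_{L^\infty(\Omega_1)}\le C\bigl(\|\varphi^{(\ell)}\|_{C^{1,\gamma}}+\|\psi^{(\ell)}\|_{C^{1,\gamma}}\bigr)$ (or even the weaker $L^2$ bound on ${\sf F}$) is false with $\varepsilon$-independent $C$. Both ${\sf F}$ and ${\sf H}$ contain $\partial_n\wdt u_\ell^{(\ell)}=\bigl(\varphi^{(\ell)}-\psi^{(\ell)}\bigr)/\delta(x')$, which by \eqref{par_n} is comparable to $\bigl|\varphi^{(\ell)}(x',\cdot)-\psi^{(\ell)}(x',\cdot)\bigr|/(\varepsilon+|x'|^{1+\gamma})$. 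When $\varphi^{(\ell)}(\vec 0_{n-1},\varepsilon/2)\ne\psi^{(\ell)}(\vec 0_{n-1},-\varepsilon/2)$ — precisely the regime of interest, cf.\ the lower bound in Theorem~\ref{Them1} — this is of order $1/\varepsilon$ at $x'=0$, so it is nowhere near $L^\infty$-bounded; moreover for $n=2$ its $L^2(\Omega_1)$ norm grows like $\varepsilon^{-\gamma/(2(1+\gamma))}$. Your geometric observation $\delta(x')\le C(\varepsilon+|x'|^{1+\gamma})$ points the wrong way: bounding the \emph{denominator} from above makes the quotient \emph{larger}, so it cannot "kill the apparently singular denominator" as you hoped. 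The paper avoids ever putting $\partial_n\wdt{\sf u}_\ell$ into $L^2$ or $L^\infty$: it integrates $\sum_\alpha\partial_\alpha F_i^{(\alpha)}$ by parts (with no boundary contribution on $\Gamma_1^\pm$ since ${\sf w}_\ell$ vanishes there), and then exploits the structural fact $\partial_{nn}\wdt{\sf u}_\ell=0$ so that only the \emph{tangential} derivatives $\partial_\alpha\wdt{\sf u}_\ell$, $\alpha\le n-1$, need to be placed in $L^2(\Omega_1)$ — and those carry an extra factor $|x'|^\gamma$ in the numerator (see \eqref{par_1}), which makes the integral in \eqref{par_n-1} converge uniformly in $\varepsilon$. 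Without this structural cancellation your argument does not close.
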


\begin{proof}
For simplicity, we assume that $\psi\equiv0$.
Multiply \eqref{equ_w} by ${\sf w}_{\ell}$, making use of the integration by parts in $\Omega_{1/2}$,
\begin{align*}
&\sum\limits_{\alp\bt,i,j}
\int_{\Omega_{1/2}}
\left(A_{ij}^{\alpha,\beta}(x)\partial_{\beta}w_{\ell}^{(j)}\right)
\partial_{\alpha}w_{\ell}^{(i)}\,dx\\
=
&\sum\limits_{\alp,\bt,i,j}
\left(-\int_{\Omega_{1/2}}B_{ij}^{\alpha}(x)w_{\ell}^{(j)}\partial_{\alpha}w_{\ell}^{(i)}\,dx
+\int_{\Omega_{1/2}} C_{ij}^{\beta}(x)\partial_{\beta}w_{\ell}^{(j)}w_{\ell}^{(i)}\,dx
\right)
\nonumber\\
\quad
&+\sum\limits_{\alp, \bt,i,j}\left(
\int_{\Omega_{1/2}}D_{ij}(x)w_{\ell}^{(j)}w_{\ell}^{(i)}\,dx
-\int_{\Omega_{1/2}}H^{(i)}w_{\ell}^{(i)}
+\int_{\Omega_{1/2}}\partial_{\alpha}F_{i}^{\alpha}w_{\ell}^{(i)}\,dx\right)\nonumber\\
\quad
&+\sum\limits_{\alp,\bt,i,j}\left(
\int_{\substack{|x'|=1/2,\\-\frac{\varepsilon}{2}+h_{2}(x')<x_{n}<\frac{\varepsilon}{2}+h_{1}(x')}}
\left(A_{ij}^{\alpha\beta}(x)\partial_{\beta}w_{\ell}^{(j)}+B_{ij}^{\alpha}(x)w_{\ell}^{(j)}\right)w_{\ell}^{(i)}\nu^{(\alp)}\,ds
\right),
\end{align*}
where $\vec{\nu}:=(\nu^{(1)},\cdots,\nu^{(n)})\in \mathbb{R}^{n}$ is the  unit outer normal vector of
vertical boundary on both sides of $\Omega_{1/2}$.

From the strong ellipticity condition  \eqref{LHC} and Cauchy's inequality, we have
\begin{align}\label{1.1}
\lambda\int_{\Omega_{1/2}}\left|\nabla {\sf w}_{\ell}\right|^{2}\,dx&\leq
\sum\limits_{\alp,\bt,i,j}\left(
\int_{\Omega_{1/2}}\left(A_{ij}^{\alpha\beta}(x)\partial_{\beta}w_{\ell}^{(j)}\right)\partial_{\alpha}w_{\ell}^{(i)}\,dx
\right)
\nonumber\\
&\leq \frac{\lambda}{4}\int_{\Omega_{1/2}}|\nabla {\sf w}_{\ell}|^2\,dx+C\int_{\Omega_{1/2}}|{\sf w}_{\ell}|^2\,dx
+\sum\limits_{i}\left|\int_{\Omega_{1/2}}H^{(i)}w_{\ell}^{(i)}\,dx\right|\nonumber\\
&\quad+C\int_{\substack{|x'|=1/2,\\-\frac{\varepsilon}{2}+h_{2}(x')<x_{n}<\frac{\varepsilon}{2}+h_{1}(x')}}
\left(|\nabla {\sf w}_{\ell}|^2+|{\sf w}_{\ell}|^2\right)\,dx
+\sum\limits_{\alp,i}\left|\int_{\Omega_{1/2}}\partial_{\alpha} F_{i}^{\alpha}w_{\ell}^{(i)}\,dx\right|.
\end{align}
By using Cauchy's inequality again, \eqref{ul}, and \eqref{fh}, we have
\begin{align}\label{1.2}
\sum\limits_{i}\left|\int_{\Omega_{1/2}}H^{(i)}w_{\ell}^{(i)}\,dx\right|\leq&
C\int_{\Omega_1}\left(\left|\nabla\wdt{\sf u}_{\ell}\right||{\sf w}_{\ell}|+|\wdt{\sf u}_{\ell}||{\sf w}_{\ell}|\right)\,dx\nonumber\\
\leq & C\left(\int_{\Omega_1}(|\nabla\wdt{\sf u}_{\ell}|^2+|\wdt{\sf u}_{\ell}|^2)\,dx
+\int_{\Omega_1}|{\sf w}_{\ell}|^2\,dx\right)\nonumber\\
\leq& C\left( \left\|{\sf w}_{\ell} \right\|^2_{L^{2}(\Omega_{1})}
+\|\varphi^{(\ell)}\|^2_{C^{1,\gamma}(\Gamma_{1}^{+})}
\right).
\end{align}
Note that  $\Omega_{2/3}\backslash\overline{\Omega_{1/3}} \subset\subset\Omega_{1}\backslash\overline{\Omega_{1/4}}$,
 applying Theorem \ref{C1Gamma} and \eqref{1.39} in Theorem \ref{them2} for \eqref{equ_w} with \eqref{fh},  one has
\begin{align*}
\|\nabla {\sf w}_{\ell}\|_{L^{\infty}(\Omega_{2/3}\backslash\overline{\Omega_{1/3}})}
\leq
\|{\sf w}_{\ell}\|_{C^{1,\gamma}(\Omega_{2/3}\backslash\overline{\Omega_{1/3}})}
\leq &
C\left(\|{\sf w}_{\ell}\|_{L^{\infty}(\Omega_{1}\backslash\overline{\Omega_{1/4}})}
+\|\varphi^{(\ell)}\|_{C^{1,\gamma}(\Gamma_{1}^{+})}
\right)\nonumber\\
\leq&
C\left(\|{\sf w}_{\ell}\|_{L^{2}(\Omega_1)}+\|\varphi^{(\ell)}\|_{C^{1,\gamma}(\Gamma_{1}^{+})}
\right).
\end{align*}
This implies that for $x=(x',x_{n})\in \Omega_{2/3}\backslash\overline{\Omega_{1/3}},$
\begin{align*}
  \left|{\sf w}_{\ell}(x',x_n)\right|\leq C\left(\|{\sf w}_{\ell}\|_{L^{2}(\Omega_{1})}
  +\|\varphi^{(\ell)}\|_{C^{1,\gamma}(\Gamma_{1}^{+})}  \right),
\end{align*}
it follows that
\begin{align}\label{1.35}
\int_{\substack{|x'|=1/2,\\-\frac{\varepsilon}{2}+h_{2}(x')<x_{n}<\frac{\varepsilon}{2}+h_{1}(x')}}
\left(|\nabla {\sf w}_{\ell}|^2+|{\sf w}_{\ell}|^2\right)\,dx
\leq C\left(\|{\sf w}_{\ell}\|^2_{L^{2}(\Omega_{1})}+\|\varphi^{(\ell)}\|^2_{C^{1,\gamma}(\Gamma_{1}^{+})}  \right).
\end{align}
By \eqref{par_1}, we deduce
\begin{align}\label{par_n-1}
&\sum\limits_{\alp=1}^{n-1}
\int_{\Omega_{1}}|\ptl_{\alp}\wdt{{\sf u}}_{\ell}|^2\,dx\nonumber\\
\leq&
C\int_{|x'|<1}\delta(x')
\left(\frac{|x'|^{2\gamma}}{(\va+|x'|^{1+\gamma})^{2}}
\left|\varphi^{\ell}(x',\frac{\varepsilon}{2}+h_{1}(x'))\right|
^2+\|\nabla\varphi^{(\ell)}\|^2_{L^{\infty}(\Gamma_1^{+})}
\right)\,dx\nonumber\\
\leq& C\left\|\varphi^{(\ell)}\right\|^2_{C^{1,\gamma}(\Gamma_{1}^{+})}.
\end{align}
In the view of $\partial_{nn}\wdt{\sf u}_{\ell}=0$ and  \eqref{1.35}, \eqref{par_n-1},
by applying the integration by parts in $\Omega_{1/2}$ we have
\begin{align*}
\sum\limits_{\alp,i}\left|\int_{\Omega_{1/2}}\partial_{\alpha} F_{i}^{(\alpha)}w_{\ell}^{(i)}\,dx\right|
\leq&
\sum\limits_{\alp,i}\left|\int_{\Omega_{1/2}} F_{i}^{(\alpha)}\partial_{\alpha}w_{\ell}^{(i)}\,dx\right|
+\left|\int_{\substack{|x'|=1/2,\\-\frac{\varepsilon}{2}+h_{2}(x')<x_{n}<\frac{\varepsilon}{2}+h_{1}(x')}}
F_{i}^{(\alpha)} w_{\ell}^{(i)}\nu^{\alp}\,dx \right|\nonumber\\
\leq&
C\int_{\Omega_{1/2}}
\left(\sum\limits_{\alp=1}^{n-1}\left|\ptl_{\alp}\wdt{\sf u}_{\ell}\right| |\nabla {\sf w}_{\ell}|+|\wdt{\sf u}_{\ell}||\nabla {\sf w}_{\ell}|\right)\,dx\nonumber\\
\quad &
+C \int_{\substack{|x'|=1/2,\\-\frac{\varepsilon}{2}+h_{2}(x')<x_{n}<\frac{\varepsilon}{2}+h_{1}(x')}}
\left( \sum\limits_{\alp=1}^{n-1}\left|\ptl_{\alp}\wdt{\sf u}_{\ell}\right||{\sf w}_{\ell}|+|\wdt{\sf u}_{\ell}||{\sf w}_{\ell}|\right)\,ds\nonumber\\
\leq&
\frac{\lambda}{4}\int_{\Omega_{1/2}}|\nabla {\sf w}_{\ell}|^2\,dx
+C\left(\|\varphi^{(\ell)}\|^2_{C^{1,\gamma}(\Gamma_{1}^{+})}+ \|{\sf w}_{\ell}\|^2_{L^{2}(\Omega_{1})}\right)
,
\end{align*}
this, combining with  \eqref{1.1}, \eqref{1.2}, and \eqref{1.35} we obtain
\begin{align*}
\int_{\Omega_{1/2}}\left|\nabla {\sf w}_{\ell}\right|^{2}\,dx
\leq C
\left(\left\|{\sf w}_{\ell}\right\|_{L^{2}(\Omega_{1})}^{2}
+
\|\varphi^{(\ell)}\|^2_{C^{1,\gamma}(\Gamma_{1}^{+})}
\right).
\end{align*}
We have completed the proof of Lemma \ref{lem2}.
\end{proof}
Let $1\leq s\leq 1/2$ and $\Omega_{1/2}$ be as in \eqref{Omega}.
Let $h_1, h_2\in C^{1,\gamma}(B'_{1})$ satisfy \eqref{1.42}-\eqref{1.50}
and $\va$ be as in \eqref{vare}, define set as
\begin{align}\label{hatomega}
\widehat{\Omega}_{s}(z):=\{(x',x_{n})\in \Omega_{1/2}\,|-\frac{\varepsilon}{2}+h_{2}(x')<x_{n}<\frac{\varepsilon}{2}+h_{1}(x'),
\,|x'-z'|<s\}.
\end{align}
In order to iteration as in  \cite{bll,CL} to prove that
the gradients of the auxiliary functions $\wdt{\sf u}_{\ell}$  are the major singular terms of $|\nabla v_{\ell}|$,
we need the following estimates:
namely, for a fixed  point
\begin{align}\label{z}
z=(z',z_{n})\in \Omega_{1/2},
\end{align}
we consider the H$\mathrm{\ddot{o}}$lder semi-norm estimates
of $\nabla\wdt{\sf u}_{\ell}$ in $\widehat{\Omega}_{s}(z)$.
In the following, we always assume that $\varepsilon$ as in  \eqref{vare} and $|z'|$ are sufficiently small.

\begin{proposition}\label{prop1}
Let $h_{1},$ $h_{2}$ satisfy  \eqref{1.42}--\eqref{1.50},
$\Gamma_{1}^{+},$ $\Gamma_{1}^{-}$ be as in \eqref{boundary} and $\varphi,$ $\psi$
be as in \eqref{1.51}.
Let $z=(z',z_{n})$ be as in \eqref{z}, $\delta(z')$ be as in \eqref{delta},
and $\widehat{\Omega}_{s}(z)$ be as in \eqref{hatomega} for $0<s\leq C\delta(z')$.
Then there exists a positive constants $C$
independent of $ \va$, such that, for any $\ell=1,\cdots, m$,
\begin{align}
\begin{aligned}\label{1.55}
\left[\nabla\wdt{\sf u}_{\ell}\right]_{\gamma,\widehat{\Omega}_{s}(z)}
\leq& C\left|\varphi^{(\ell)}(z', \frac{\varepsilon}{2}+h_1(z'))-\psi^{(\ell)}(z', -\frac{\varepsilon}{2}+h_2(z'))
\right|\\
&\quad\left(\delta(z')^{-1-\frac{1}{1+\gamma}}s^{1-\gamma}
+ \delta(z')^{-\gamma-\frac{1}{1+\gamma}} \right)
+C\left(\|\varphi^{(\ell)}\|_{C^{1,\gamma}(\Gamma_1^+)}+\|\psi^{(\ell)}\|_{C^{1,\gamma}(\Gamma_1^-)}\right)\\
&\quad\left(\delta(z')^{-1-\frac{1}{1+\gamma}} s^{2-\gamma}
+\delta(z')^{-1}s^{1-\gamma}
+ \delta(z')^{-\gamma-\frac{1}{1+\gamma}} s
+\delta(z')^{-\gamma}\right).
\end{aligned}
\end{align}
\end{proposition}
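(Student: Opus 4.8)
The plan is to reduce the estimate to a statement about a single scalar function and then carry out a long but entirely elementary computation with H\"older seminorms. Since only the $\ell$-th component of $\wdt{\sf u}_\ell$ is nonzero, set $\Phi(x'):=\varphi^{(\ell)}(x',\va/2+h_1(x'))$ and $\Psi(x'):=\psi^{(\ell)}(x',-\va/2+h_2(x'))$; these are functions of $x'$ only, and being compositions of the $C^{1,\gamma}$ data $\varphi^{(\ell)},\psi^{(\ell)}$ with the $C^{1,\gamma}$ maps $h_1,h_2$ they satisfy $\|\Phi\|_{C^{1,\gamma}}+\|\Psi\|_{C^{1,\gamma}}\le C\mathcal N$, where $\mathcal N:=\|\varphi^{(\ell)}\|_{C^{1,\gamma}(\Gamma_1^+)}+\|\psi^{(\ell)}\|_{C^{1,\gamma}(\Gamma_1^-)}$. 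With $\bar u$ as in \eqref{1.52}, the $\ell$-th component of $\wdt{\sf u}_\ell$ is $g(x):=\Psi(x')+(\Phi(x')-\Psi(x'))\bar u(x)$, so $[\nabla\wdt{\sf u}_\ell]_{\gamma,\widehat\Omega_s(z)}=[\nabla g]_{\gamma,\widehat\Omega_s(z)}$ and it suffices to bound the latter.

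First I would record the geometric facts valid on $\widehat\Omega_s(z)$ under the standing hypothesis $s\le C\delta(z')$. From \eqref{h}, \eqref{h'} and \eqref{1.50} one has $\delta(x')\approx\va+|x'|^{1+\gamma}$ and $|\nabla h_i(x')|\le C|x'|^\gamma$; together with $|x'|\le|z'|+s$, $|z'|\le 1/2$ and $s\le C\delta(z')$, and distinguishing the regimes $|z'|^{1+\gamma}\lesssim\va$ and $\va\lesssim|z'|^{1+\gamma}$, this gives $\tfrac12\delta(z')\le\delta(x')\le 2\delta(z')$ and the crucial conversion $(|z'|+s)^\gamma\le C\,\delta(z')^{\gamma/(1+\gamma)}$. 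Feeding this into \eqref{baru} and into the explicit formulas $\partial_n\bar u=1/\delta(x')$ and $\partial_\alpha\bar u=-\partial_\alpha h_2/\delta-\bar u(\partial_\alpha h_1-\partial_\alpha h_2)/\delta$ for $\alpha<n$ (using $0\le x_n-h_2(x')+\va/2\le\delta(x')$), I obtain on $\widehat\Omega_s(z)$: $|\partial_\alpha\bar u|\le C\delta(z')^{-1/(1+\gamma)}$, $|\partial_n\bar u|\approx\delta(z')^{-1}$ (hence $\|\nabla\bar u\|_{L^\infty}\le C\delta(z')^{-1}$), $\|\nabla h_i\|_{L^\infty}\le C\delta(z')^{\gamma/(1+\gamma)}$, $[\nabla h_i]_{\gamma}\le\kappa_2$, and—since $\widehat\Omega_s(z)$ has diameter $\lesssim\delta(z')$ and $\bar u$ is Lipschitz there—$[\bar u]_{\gamma}\le C\delta(z')^{-\gamma}$ and $[\delta]_{\gamma}\le C\delta(z')^{\gamma/(1+\gamma)}s^{1-\gamma}$.

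The heart of the proof is the estimate $[\nabla\bar u]_{\gamma,\widehat\Omega_s(z)}\le C\big(\delta(z')^{-1-1/(1+\gamma)}s^{1-\gamma}+\delta(z')^{-\gamma-1/(1+\gamma)}\big)$. I would apply the product and quotient rules $[fg]_\gamma\le\|f\|_\infty[g]_\gamma+\|g\|_\infty[f]_\gamma$ and $[1/f]_\gamma\le\|1/f\|_\infty^2[f]_\gamma$ to the formulas for $\partial_n\bar u$ and $\partial_\alpha\bar u$, insert the bounds just listed, and clean up surplus powers of $s$ by means of $s\le C\delta(z')$ together with the elementary observation that $\delta(z')\le C_0$ forces $\delta(z')^a\le C\delta(z')^b$ whenever $b\le a$. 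The two exponents in the claimed bound come out respectively of the identity $\gamma/(1+\gamma)-2=-1-1/(1+\gamma)$ (in the $[1/\delta]_\gamma$ contributions) and of $\delta(z')^{-1}[\nabla h_i]_\gamma\le C\delta(z')^{-\gamma-1/(1+\gamma)}$ (in the $[\nabla h_i]_\gamma$ contribution).

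Finally I would assemble $\nabla g=\nabla'\Psi+\bar u\,\nabla'(\Phi-\Psi)+(\Phi-\Psi)\nabla\bar u$, where $\nabla'$ denotes the tangential gradient (zero in the $x_n$-slot), and bound each summand by the product rule. The first two summands, after absorbing bounded factors into $\delta(z')^{-\gamma}$, contribute only $C\mathcal N\delta(z')^{-\gamma}$. For the singular summand I would write $\|\Phi-\Psi\|_{L^\infty(\widehat\Omega_s(z))}\le|\Phi(z')-\Psi(z')|+C\mathcal N s$ and $[\Phi-\Psi]_{\gamma,\widehat\Omega_s(z)}\le C\mathcal N s^{1-\gamma}$, and then use $[(\Phi-\Psi)\nabla\bar u]_\gamma\le\|\Phi-\Psi\|_{L^\infty(\widehat\Omega_s(z))}[\nabla\bar u]_\gamma+\|\nabla\bar u\|_{L^\infty}[\Phi-\Psi]_{\gamma,\widehat\Omega_s(z)}$; substituting the bound for $[\nabla\bar u]_\gamma$ from the previous step, the $|\Phi(z')-\Psi(z')|$-part reproduces exactly the first bracket of \eqref{1.55}, the term $\|\nabla\bar u\|_{L^\infty}[\Phi-\Psi]_{\gamma,\widehat\Omega_s(z)}$ gives $C\mathcal N\delta(z')^{-1}s^{1-\gamma}$, and the $s$-part of $\|\Phi-\Psi\|_{L^\infty}$ gives the remaining norm-weighted terms $C\mathcal N(\delta(z')^{-1-1/(1+\gamma)}s^{2-\gamma}+\delta(z')^{-\gamma-1/(1+\gamma)}s)$; together with the $\delta(z')^{-\gamma}$ from the first two summands this is precisely the second bracket of \eqref{1.55}. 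Since $\Phi(z')-\Psi(z')=\varphi^{(\ell)}(z',\va/2+h_1(z'))-\psi^{(\ell)}(z',-\va/2+h_2(z'))$, collecting everything gives \eqref{1.55}. The only genuine difficulty I anticipate is the bookkeeping: carrying the many powers of $\delta(z')$ and $s$ through the product-rule estimates and applying the two conversions $s\le C\delta(z')$ and $(|z'|+s)^\gamma\le C\delta(z')^{\gamma/(1+\gamma)}$ so that every term collapses onto one of the precise combinations on the right-hand side; no idea beyond the explicit form of $\bar u$ and the seminorm product rule is required.
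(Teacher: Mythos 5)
Your proposal is correct and follows essentially the same route as the paper's proof: both approaches reduce matters to H\"older seminorm estimates of $\nabla\bar u$ on $\widehat\Omega_s(z)$ (using $\delta(x')\approx\delta(z')$, $|\nabla h_i|\lesssim\delta(z')^{\gamma/(1+\gamma)}$, $[\delta]_\gamma\lesssim\delta(z')^{\gamma/(1+\gamma)}s^{1-\gamma}$) and then propagate to $\wdt{\sf u}_\ell$ via the product decomposition of $\nabla\wdt{\sf u}_\ell$. The only stylistic difference is that you package the calculation with the abstract seminorm product and quotient rules $[fg]_\gamma\le\|f\|_\infty[g]_\gamma+\|g\|_\infty[f]_\gamma$ and $[1/f]_\gamma\le\|1/f\|_\infty^2[f]_\gamma$, while the paper does the same bookkeeping by explicit pointwise difference estimates (splitting $\partial_\alpha\bar u$ into $\Pi_1+\Pi_2+\Pi_3$ and $\partial_\alpha\wdt{\sf u}_\ell$ into $\mathrm{I}+\mathrm{II}$); the resulting intermediate bound $[\nabla\bar u]_{\gamma}\lesssim\delta(z')^{-1-1/(1+\gamma)}s^{1-\gamma}+\delta(z')^{-\gamma-1/(1+\gamma)}$ and the final assembly are the same in both.
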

\begin{proof}
Since
$
s\leq C\delta(z')$ and $ |z'|\leq C\delta(z')^{\frac{1}{1+\gamma}},
$
 for any $x=(x',x_n)\in \widehat{\Omega}_{s}(z)$,
\begin{align*}
|x'|\leq|x'-z'|+|z'|<s+|z'|\leq C\delta(z')^{\frac{1}{1+\gamma}},
\end{align*}
and combining with  mean value theorem we have
\begin{align}\label{diff_hi}
 |h_i(x')-h_i(\tilde{x}')|\leq C|x'_{\theta_i}|^{\gamma}|x'-\tilde{x}'|\leq
C \delta(z')^{\frac{\gamma}{1+\gamma}}|x'-\tilde{x}'|,\quad i=1,2,
\end{align}
for any $x',\tilde{x}'\in\widehat{\Omega}_{s}(z),$ $x'\neq\tilde{x}' $.
In view of \eqref{delta}, we obtain
\begin{align}\label{diff_delta_}
|\delta(x')-\delta(\tilde{x}')|\leq |h_1(x')-h_1(\tilde{x}')|+|h_2(x')-h_2(\tilde{x}')|\leq
C\delta(z')^{\frac{\gamma}{1+\gamma}}|x'-\tilde{x}'|.
\end{align}
In particular, taking $\tilde{x}'=z'$, and recalling that $|x'-z'|< s\leq C\delta(z')$, we have
\begin{align} \label{upper_delta}
\delta(x')\leq \delta(z')+|h_1(x')-h_1(z')|+|h_2(x')-h_2(z')|\leq C\delta(z'),
\end{align}
and for sufficiently small $\varepsilon$ and $|z'|$,
\begin{align}\label{low_delta}
\delta(x')\geq\delta(z')-|h_1(x')-h_1(z')|-|h_2(x')-h_2(z')|\geq \frac{1}{2}\delta(z').
\end{align}

Next we estimate $\left|\partial_{\alpha}\bar{u}(x)
-\partial_{\alpha}\bar{u}(\tilde{x})\right|,$ $x,\tilde{x}\in\widehat{\Omega}_{s}(z),$ $x\neq\tilde{x}$, for $ \alpha=1,2,\cdots,n$.
For $\alpha=1,2,\cdots,n-1$, recalling  \eqref{1.52}, we have
\begin{align}
\partial_{\alpha}\bar{u}(x',x_n)&=\frac{-\partial_{\alpha}h_2(x')}{\delta(x')}+\frac{-x_n}{\delta^2(x')}+
\frac{(h_2(x')-{\varepsilon/2})\partial_{\alpha}\delta(x')}{\delta^{2}(x')}\nonumber\\
&=:\Pi_1(x)+\Pi_2(x)+\Pi_3(x).\label{chi1}
\end{align}
Since $h_1(x'),$ $h_2(x')\in C^{1,\gamma}(B'_1)$, \eqref{diff_delta_}, and \eqref{low_delta}, we obtain
\begin{align}\label{chi2}
|\Pi_{1}(x)-\Pi_{1}(\tilde{x})|&
\leq\frac{|\partial_{\alpha}h_2(x')-\partial_{\alpha}h_2(\tilde{x}'|)}{\delta(x')}
+\left|\partial_{\alpha}h_2(\tilde{x}')\right|\left|\frac{1}{\delta(\tilde{x}')}-\frac{1}{\delta(x')}\right|\nonumber\\
&\leq C\left(\delta(z')^{-1}|x'-\tilde{x}'|^{\gamma}+\delta(z')^{-\frac{2}{1+\gamma}} |x'-\tilde{x}'|\right),
\end{align}
for any $x,\tilde{x}\in\widehat{\Omega}_{s}(z),$ $x\neq\tilde{x}$.
By using $|\tilde{x}_n|\leq \delta(z')$, we can obtain
\begin{align}\label{chi3}
|\Pi_{2}(x)-\Pi_2(\tilde{x})|&\leq \frac{|\partial_{\alpha}\delta(x')|}{\delta^{2}(x')}|x_n-\tilde{x}_n|
+\frac{|\tilde{x}_n|}{\delta^{2}(x')}|\partial_{\alpha}\delta(x')-\partial_{\alpha}\delta(\tilde{x}')|\nonumber\\
&\leq C\left(\delta(z')^{-1-\frac{1}{1+\gamma}}|x_n-\tilde{x}_{n}|+\delta(z')^{-1}|x'-\tilde{x}'|^{\gamma}\right).
\end{align}
It follows from \eqref{diff_hi}, \eqref{upper_delta}, and \eqref{low_delta} that
\begin{align*}
|\Pi_{3}(x)-\Pi_{3}(\tilde{x})|\leq&
\frac{|\partial_{\alpha}\delta(x')|}{\delta(x')^2}|h_{2}(x')-h_{2}(\tilde{x}')|
+\frac{\delta(\tilde{x}')}{\delta(x')^{2}}|\partial_{\alpha}\delta(x')
-\partial_{\alpha}\delta(\tilde{x}')|\nonumber\\
&+\delta(\tilde{x}')|\partial_{\alpha}\delta(\tilde{x}')|\left|\frac{1}{\delta(x')^{2}}-
\frac{1}{\delta(\tilde{x}')^2}\right|\nonumber\\
&\leq C\left(\delta(z')^{-\frac{2}{1+\gamma}}|x'-\tilde{x}'|+\delta(z')^{-1}|x'-\tilde{x}'|^{\gamma}\right).
\end{align*}
This, combining with \eqref{chi1}, \eqref{chi2}, and \eqref{chi3}, which implies
that for $\alpha=1,2,\cdots,n-1,$
\begin{align}\label{1.7}
&\left|\partial_{\alpha}\bar{u}(x)-\partial_{\alpha}\bar{u}(\tilde{x})\right|\nonumber\\
\leq & C\left(\delta(z')^{-\frac{2}{1+\gamma}} |x'-\tilde{x}'|+
 \delta(z')^{-1}|x'-\tilde{x}'|^{\gamma}
+ \delta(z')^{-1-\frac{1}{1+\gamma}}|x_n-\tilde{x}_{n}| \right),
\end{align}
and for $\alpha=n$, by using \eqref{diff_delta_} and \eqref{low_delta}, we obtain
\begin{align}\label{par_nbaru}
\left|\partial_{n}\bar{u}(x)-\partial_{n}\bar{u}(\tilde{x})\right|\leq C\left|\frac{1}{\delta(x')}-\frac{1}{\delta(\tilde{x}')}\right|
\leq C\delta(z')^{-1-\frac{1}{1+\gamma}}|x'-\tilde{x}'|.
\end{align}

Now we prove \eqref{1.55}. Take the case when $\psi\equiv0$ for instance.
Firstly, for $\alpha=n$, since $\varphi(x)\in C^{1, \gamma}(\Gamma_1^+)$, \eqref{baru}, \eqref{low_delta}, and \eqref{par_nbaru}, we have
\begin{align*}
\left| \partial_{n}{\wdt{\sf u}}_{\ell}(x)-\partial_n{\wdt{\sf u}}_{\ell}(\tilde{x})\right|
\leq&
\left|\varphi^{(\ell)}(x', \frac{\varepsilon}{2}+h_1(x'))\right|\left| \partial_{n}\bar{u}(x)-\partial_{n}\bar{u}(\tilde{x}) \right|\nonumber\\
&+|\partial_{n}\bar{u}(\tilde{x})|\left|\varphi^{(\ell)}(x', \frac{\varepsilon}{2}+h_1(x'))-\varphi^{(\ell)}(\tilde{x}',\frac{\varepsilon}{2}+h_1(\tilde{x}')\right|
\nonumber\\
\leq&
C\left|\varphi^{(\ell)}(z', \frac{\varepsilon}{2}+h_1(z'))\right|\delta(z')^{-1-\frac{1}{1+\gamma}}|x'-\tilde{x}'|\nonumber\\
&+C\|\nabla\varphi^{(\ell)}\|_{L^{\infty}(\Gamma_1^+)}\delta(z')^{-1}|x'-\tilde{x}'| \left(
\delta(z')^{-\frac{1}{1+\gamma}}s
+1\right),
\end{align*}
where we used the fact that $|x'-z'|<s$. Similarly, by using $|x'-\tilde{x}'|\leq s$ we obtain
\begin{align}\label{1.10}
[\partial_{n}\wdt{\sf u}_{\ell}]_{\gamma,\widehat{\Omega}_{s}(z')}
\leq &C\left|\varphi^{(\ell)}(z', \frac{\varepsilon}{2}+h_1(z'))\right|
 \delta(z')^{-1-\frac{1}{1+\gamma}} s^{1-\gamma}\nonumber\\
\quad
&+C\|\nabla\varphi^{(\ell)}\|_{L^\infty(\Gamma_1^+)}
(\delta(z')^{-1-\frac{1}{1+\gamma}}s^{2-\gamma}
+\delta(z')^{-1}s^{1-\gamma}).
\end{align}
For $\alpha=1,2,\cdots,n-1$,
\begin{align}\label{pardecom}
\left| \partial_{\alpha}{\wdt{\sf u}}_{\ell}(x)-\partial_{\alpha}{\wdt{\sf u}}_{\ell}(\tilde{x})\right|
\leq &\left|\partial_{\alpha}\varphi^{(\ell)}(x', \frac{\varepsilon}{2}+h_1(x'))\bar{u}(x)-\partial_{\alpha}\varphi^{(\ell)}(\tilde{x}', \frac{\varepsilon}{2}+h_1(\tilde{x}'))\bar{u}(\tilde{x})\right|\nonumber\\
&+\left| \varphi^{(\ell)}(x', \frac{\varepsilon}{2}+h_1(x'))\partial_{\alpha}\bar{u}(x)- \varphi^{(\ell)}(x', \frac{\varepsilon}{2}+h_1(x'))\partial_{\alpha}\bar{u}(\tilde{x})  \right|\nonumber\\
=&:\mathrm{I}+\mathrm{II}.
\end{align}
Using \eqref{low_delta} and \eqref{par_nbaru}, we obtain
\begin{align*}
\mathrm{I}&\leq C \|\nabla\varphi^{(\ell)} \|_{L^{\infty}(\Gamma_1^{+})} \left( \frac{|x_{n}-\tilde{x}_{n}|}{\delta(x')}
+|\tilde{x}_{n}||\frac{1}{\delta(x')}-\frac{1}{\delta(\tilde{x}')}|\right)
+C\|\nabla\varphi^{(\ell)}\|_{C^{0,\gamma}(\Gamma_1^+)}|x'-\tilde{x}'|^{\gamma}\nonumber\\
&\leq C\|\nabla\varphi^{(\ell)}\|_{C^{0,\gamma}(\Gamma_1^+)}\left(\delta(z')^{-1}|x_{n}-\tilde{x}_{n}|
+\delta(z')^{-\frac{1}{1+\gamma}}|x'-\tilde{x}'|+|x'-\tilde{x}'|^{\gamma}\right).
\end{align*}
In view of $|x'-\tilde{x}'|<s$ and $|x_{n}-\tilde{x}_{n}|\leq 2\delta(z')$, we obtain
\begin{align}\label{1.8}
\sup\limits_{x,\tilde{x}\in\widehat{\Omega}_{s}(z'),x\neq\tilde{x}}\frac{\mathrm{I}}{|x-\tilde{x}|^{\gamma}}
\leq C\left\|\nabla\varphi^{(\ell)}\right\|_{C^{0,\gamma}(\Gamma_1^+)}
\left(\delta(z')^{-\gamma}+\delta(z')^{-\frac{1}{1+\gamma}}s^{1-\gamma}\right).
\end{align}
It follows from \eqref{baru} and \eqref{1.7} that
\begin{align*}
\mathrm{II}
\leq& C\left(|\varphi^{(\ell)}(z', \frac{\varepsilon}{2}+h_1(z'))|+ s\|\nabla\varphi^{(\ell)}\|_{L^\infty(\Gamma_{1}^+)} \right)\nonumber\\
&\left(\delta(z')^{-\frac{2}{1+\gamma}} |x'-\tilde{x}'|+
 \delta(z')^{-1}|x'-\tilde{x}'|^{\gamma}
+ \delta(z')^{-1-\frac{1}{1+\gamma}}|x_n-\tilde{x}_{n}| \right)\nonumber\\
&+C\|\nabla\varphi^{(\ell)}\|_{L^\infty(\Gamma_1^+)}\delta(z')^{-1+\frac{\gamma}{1+\gamma}}
|x'-\tilde{x}'|.
\end{align*}
Obviously,
\begin{align*}
\sup\limits_{x, \tilde{x}\in\widehat{\Omega}_{s}(z'),x\neq\tilde{x}}&\frac{\mathrm{II}}{|x-\tilde{x}|^{\gamma}}
\leq
C\left|\varphi^{(\ell)}(z', \frac{\varepsilon}{2}+h_1(z'))\right|
\left(\delta(z')^{-\frac{2}{1+\gamma}} s^{1-\gamma}
+ \delta(z')^{-\gamma-\frac{1}{1+\gamma}} \right)\nonumber\\
&
+C\|\nabla\varphi^{(\ell)}\|_{L^\infty(\Gamma_1^+)}
\left(
\delta(z')^{-\frac{2}{1+\gamma}} s^{2-\gamma}
+\delta(z')^{-\frac{1}{1+\gamma}} s^{1-\gamma}
+ \delta(z')^{-\gamma-\frac{1}{1+\gamma}} s
\right)
.
\end{align*}
This, combining with \eqref{pardecom} and \eqref{1.8}, we have for $\alpha=1,2,\cdots, n-1$,
\begin{align*}
[\partial_{\alpha}{\wdt{\sf u}}_{\ell}]&_{\gamma,\widehat{\Omega}_{s}(z')}
\leq
C\left|\varphi^{(\ell)}(z', \frac{\varepsilon}{2}+h_1(z'))\right|
\left(\delta(z')^{-\frac{2}{1+\gamma}} s^{1-\gamma}
+ \delta(z')^{-\gamma-\frac{1}{1+\gamma}}
\right)
\nonumber\\
&
+C\|\varphi^{(\ell)}\|_{C^{1,\gamma}(\Gamma_1^+)}
\left(\delta(z')^{-\frac{2}{1+\gamma}} s^{2-\gamma}
+\delta(z')^{-\frac{1}{1+\gamma}}s^{1-\gamma}
+ \delta(z')^{-\gamma-\frac{1}{1+\gamma}} s
+\delta(z')^{-\gamma}
\right)
.
\end{align*}
From \eqref{1.10} and above formula, we have proved Proposition \ref{prop1} immediately.
\end{proof}
Let $z=(z',z_{n})$ be as in \eqref{z}, $\delta(z')$ be as in \eqref{delta},
and $\widehat{\Omega}_{s}(z)$ be as in \eqref{hatomega} for $0<s\leq C\delta(z')$.
Let $A_{ij}^{\alp\bt}$ be as in Definition \ref{defn1},
$\wdt{\sf u}_{\ell}$ be as in \eqref{ul} for $\ell=1,\cdots, m$,
 and $|\widehat{\Omega}_{s}(z)|$
be the volume of region $\widehat{\Omega}_{s}(z)$. For
$i=1, 2, \cdots, m$ and $\alpha=1,\cdots,n$, we define
\begin{align}\label{1.12}
\mathcal{M}_{i}^{(\alpha)}:=(\mathfrak{a}_{i}^{\alpha})_{j\beta}=
\Big(\frac{1}{|\widehat{\Omega}_{s}(z)|}\int_{\widehat{\Omega}_{s}(z)}
\sum\limits_{\bt,j}
\big(A_{ij}^{\alpha\beta}(y)\partial_{\beta}\tilde{u}_{\ell}^{(j)}(y)
\big)\,dy\Big)_{j\beta}.
\end{align}
Clearly, by using \eqref{equ_w} we have ${\sf w}_{\ell},$ $\ell=1,\cdots, m$ as in \eqref{1.53} satisfies
\begin{align}
\begin{aligned}\label{Matrix}
\sum\limits_{\alp,\bt,i,j}
\partial_{\alpha}\big(A_{ij}^{\alpha\beta}\partial_{\beta}w_{\ell}^{(j)}+B_{ij}^{\alpha}w_{\ell}^{(j)}\big)
+C_{ij}^{\beta}\partial_{\beta}w_{\ell}^{(j)}+&D_{ij}w_{\ell}^{(j)}\\
=&H^{(i)}-\sum\limits_{\alp}\partial_{\alpha} (F_{i}^{(\alpha)}-\mathcal{M}_{i}^{(\alpha)}),~~\hbox{in}~~\Omega_{1}.
\end{aligned}
\end{align}

\begin{lemma}\label{lemmabddlocal}
Let $z=(z',z_{n})\in \Omega_{1/2}$ as in \eqref{Omega}, $\delta(z')$ be as in \eqref{delta}
and $\widehat{\Omega}_{\delta(z')}(z)$ be as in \eqref{hatomega}.
Let $h_{1},$ $h_{2}$ satisfy  \eqref{1.42}-\eqref{1.50},
$\Gamma_{1}^{+},$ $\Gamma_{1}^{-}$ be as in \eqref{boundary} and $\varphi,$ $\psi$
be as in \eqref{1.51}.
Let $\va$ be as in \eqref{vare} and ${\sf w}_{\ell}$ be as in
\eqref{Matrix}, $\ell=1,\cdots, m$. Then there exists a positive constant $C$ independent of
$\va,\ell$, such that,
for $0\leq|z'|\leq \va^{\frac{1}{1+\gamma}}$,
\begin{align}\label{1.46}
\int_{\widehat{\Omega}_{\delta(z')}(z)}\left|\nabla {\sf w}_{l}\right|^{2}\ dx
\leq&
C\va^{n-\frac{2}{1+\gamma}}
\left(
\left|\varphi^{(\ell)}(z', \frac{\varepsilon}{2}+h_1(z'))
-\psi^{(\ell)}(z',-\frac{\va}{2}+h_2(z'))\right|^2\right)\nonumber\\
    \quad&+C\va^{n-\frac{2}{1+\gamma}+2}
    \left(
    \left\|{\sf w}_{\ell}\right\|^2_{L^{2}(\Omega_1)}
    +\left\|\varphi^{(\ell)}\right\|_{C^{1,\gamma}(\Gamma_{1}^{+})}^2
+\left\|\psi^{(\ell
)}\right\|_{C^{1,\gamma}(\Gamma_{1}^{-})}^2\right),
\end{align}
and for $\va^{\frac{1}{1+\gamma}}<|z'|\leq 1/2$,
\begin{align}\label{1.47}
\int_{\widehat{\Omega}_{\delta(z')}(z)}\left|\nabla {\sf w}_{\ell}\right|^{2}\ dx
\leq&
C|z'|^{(1+\gamma)(n-\frac{2}{1+\gamma})}\left(\left|\varphi^{(\ell)}(z', \frac{\varepsilon}{2}+h_1(z'))
-\psi^{(\ell)}(z',-\frac{\va}{2}+h_2(z'))\right|^2\right)\nonumber\\
\quad&
+C|z'|^{(1+\gamma)(n-\frac{2}{1+\gamma}+2)}
\left(
\left\|{\sf w}_{\ell}\right\|^2_{L^{2}(\Omega_1)}
+\left\|\varphi^{(\ell)}\right\|_{C^{1,\gamma}(\Gamma_{1}^{+})}^2
+\left\|\psi^{(\ell)}\right\|_{C^{1,\gamma}(\Gamma_{1}^{-})}^2
\right).
\end{align}
\end{lemma}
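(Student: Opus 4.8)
The plan is to derive \eqref{1.46} and \eqref{1.47} from a localized Caccioppoli-type energy estimate for the system \eqref{Matrix} satisfied by ${\sf w}_\ell$, fed by the H\"older semi-norm bound of Proposition \ref{prop1} and the pointwise bounds \eqref{par_1}--\eqref{par_n} on $\wdt{\sf u}_\ell$. Two reductions come first. Since the cross-sectional radius has been taken equal to the thickness, the set $\widehat\Omega_{\delta(z')}(z)$, and any fixed dilate $\widehat\Omega_s(z)$ with $\delta(z')\le s\le C\delta(z')$, is comparable to a Euclidean ball of radius $\sim\delta(z')$; moreover \eqref{upper_delta}--\eqref{low_delta} give $\tfrac12\delta(z')\le\delta(x')\le C\delta(z')$ and $|x'|\le C\delta(z')^{1/(1+\gamma)}$ on $\widehat\Omega_s(z)$, so each vertical fiber has length comparable to $\delta(z')$. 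Finally, by \eqref{h'} and \eqref{1.50} one has $\delta(z')\approx\va$ when $|z'|\le\va^{1/(1+\gamma)}$ and $\delta(z')\approx|z'|^{1+\gamma}$ when $\va^{1/(1+\gamma)}<|z'|\le1/2$, and $(1+\gamma)\big(n-\tfrac{2}{1+\gamma}\big)=(1+\gamma)n-2$; hence it suffices to prove, for $0\le|z'|\le1/2$,
\[
\int_{\widehat\Omega_{\delta(z')}(z)}|\nabla{\sf w}_\ell|^2\,dx\le C\delta(z')^{\,n-\frac{2}{1+\gamma}}\big|\varphi^{(\ell)}(z',\tfrac\va2+h_1(z'))-\psi^{(\ell)}(z',-\tfrac\va2+h_2(z'))\big|^2+C\delta(z')^{\,n-\frac{2}{1+\gamma}+2}\big(\|{\sf w}_\ell\|_{L^2(\Omega_1)}^2+\|\varphi^{(\ell)}\|_{C^{1,\gamma}(\Gamma_1^+)}^2+\|\psi^{(\ell)}\|_{C^{1,\gamma}(\Gamma_1^-)}^2\big),
\]
and then substitute the value of $\delta(z')$ in each regime.

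\textbf{Energy estimate.} I would test \eqref{Matrix} against $\xi^2{\sf w}_\ell$, where $\xi=\xi(x')$ is a cutoff with $\xi\equiv1$ on $\{|x'-z'|<t\}$, $\supp\xi\subset\{|x'-z'|<s\}$, $0\le\xi\le1$, $|\nabla\xi|\le C/(s-t)$, and $\delta(z')\le t<s\le2\delta(z')$; this is admissible because ${\sf w}_\ell=0$ on $\Gamma_1^\pm$ (no boundary contribution on the top and bottom faces) and $\xi\equiv0$ near the lateral boundary. Integrating by parts, applying the strong ellipticity \eqref{LHC} to the principal part, Young's inequality together with \eqref{AUP} and \eqref{BCD} to the lower-order and right-hand-side terms, and the Poincar\'e inequality $\int|{\sf w}_\ell(x',\cdot)|^2\,dx_n\le C\delta(z')^2\int|\partial_n{\sf w}_\ell(x',\cdot)|^2\,dx_n$ along each fiber (valid since the fiber has length $\le C\delta(z')$ and ${\sf w}_\ell$ vanishes at both endpoints), one obtains
\[
\int_{\widehat\Omega_t(z)}|\nabla{\sf w}_\ell|^2\,dx\le\theta\int_{\widehat\Omega_s(z)}|\nabla{\sf w}_\ell|^2\,dx+\frac{C}{(s-t)^2}\int_{\widehat\Omega_s(z)}|{\sf w}_\ell|^2\,dx+C\int_{\widehat\Omega_{2\delta(z')}(z)}\big(|{\sf F}-\mathcal M|^2+\delta(z')^2|{\sf H}|^2\big)\,dx,
\]
with $\theta<1$, where ${\sf F}$, ${\sf H}$ are as in \eqref{fh} and $\mathcal M=(\mathcal M_i^{(\alpha)})$ as in \eqref{1.12} with $s=\delta(z')$. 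A standard absorption (hole-filling) iteration removes the first term, and the residual $(s-t)^{-2}\int|{\sf w}_\ell|^2$ is brought to the correct power of $\delta(z')$ by the rescaling/iteration scheme of \cite{bll,CL}: dilating the ball of radius $\delta(z')$ to unit size turns $\widehat\Omega_{2\delta(z')}(z)$ into a region between two graphs of bounded $C^{1,\gamma}$ norm, preserves \eqref{LHC}, multiplies the lower-order coefficients by positive powers of $\delta(z')$, and a local $L^2$-estimate for the rescaled system --- combined with Lemma \ref{lem2} applied at the top scale --- yields $\delta(z')^{-2}\int_{\widehat\Omega_{2\delta(z')}(z)}|{\sf w}_\ell|^2\le C\delta(z')^{\,n-\frac{2}{1+\gamma}+2}\big(\|{\sf w}_\ell\|_{L^2(\Omega_1)}^2+\|\varphi^{(\ell)}\|_{C^{1,\gamma}(\Gamma_1^+)}^2+\|\psi^{(\ell)}\|_{C^{1,\gamma}(\Gamma_1^-)}^2\big)$ plus lower-order $|{\sf F}-\mathcal M|$ and $|{\sf H}|$ contributions. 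This leaves the two integrals on the right-hand side above.

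\textbf{The data terms.} Since $\mathcal M_i^{(\alpha)}$ is the average of ${\sf F}_i^{(\alpha)}=\sum_{\beta,j}\big(A_{ij}^{\alpha\beta}\partial_\beta\wdt u_\ell^{(j)}+B_{ij}^\alpha\wdt u_\ell^{(j)}\big)$ over a set of diameter $\le C\delta(z')$, one has $\|{\sf F}-\mathcal M\|_{L^\infty(\widehat\Omega_{2\delta(z')}(z))}\le C\delta(z')^\gamma[{\sf F}]_{\gamma,\widehat\Omega_{2\delta(z')}(z)}$. Estimating $[{\sf F}]_\gamma$ by the product rule, using \eqref{AUP} and \eqref{BCD} for the coefficients, \eqref{par_1}--\eqref{par_n} (and $|\wdt u_\ell|\le\max\{|\varphi^{(\ell)}|,|\psi^{(\ell)}|\}$) for $\|\wdt u_\ell\|_\infty$ and $\|\nabla\wdt u_\ell\|_\infty$, and Proposition \ref{prop1} with $s=2\delta(z')$ for $[\nabla\wdt u_\ell]_\gamma$, one verifies that once $s\approx\delta(z')$ the dominant term of $[{\sf F}]_{\gamma,\widehat\Omega_{2\delta(z')}(z)}$ is $C\big|\varphi^{(\ell)}(z',\tfrac\va2+h_1(z'))-\psi^{(\ell)}(z',-\tfrac\va2+h_2(z'))\big|\delta(z')^{-\gamma-\frac1{1+\gamma}}$, all remaining terms being $\le C\big(\|\varphi^{(\ell)}\|_{C^{1,\gamma}(\Gamma_1^+)}+\|\psi^{(\ell)}\|_{C^{1,\gamma}(\Gamma_1^-)}\big)\delta(z')^{-\gamma}$. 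Since $|\widehat\Omega_{2\delta(z')}(z)|\le C\delta(z')^n$, multiplying by $\delta(z')^{2\gamma}|\widehat\Omega_{2\delta(z')}(z)|$ gives $\int_{\widehat\Omega_{2\delta(z')}(z)}|{\sf F}-\mathcal M|^2\le C\big|\varphi^{(\ell)}(z',\cdot)-\psi^{(\ell)}(z',\cdot)\big|^2\delta(z')^{\,n-\frac2{1+\gamma}}+C\big(\|\varphi^{(\ell)}\|_{C^{1,\gamma}(\Gamma_1^+)}^2+\|\psi^{(\ell)}\|_{C^{1,\gamma}(\Gamma_1^-)}^2\big)\delta(z')^{\,n}$; likewise \eqref{fh} with \eqref{par_1}--\eqref{par_n} give $\|{\sf H}\|_{L^\infty(\widehat\Omega_{2\delta(z')}(z))}\le C\big|\varphi^{(\ell)}(z',\cdot)-\psi^{(\ell)}(z',\cdot)\big|/\delta(z')+C\big(\|\varphi^{(\ell)}\|_{C^{1,\gamma}(\Gamma_1^+)}+\|\psi^{(\ell)}\|_{C^{1,\gamma}(\Gamma_1^-)}\big)$, whence $\delta(z')^2\int_{\widehat\Omega_{2\delta(z')}(z)}|{\sf H}|^2$ is $\le C|\varphi^{(\ell)}(z',\cdot)-\psi^{(\ell)}(z',\cdot)|^2\delta(z')^{\,n}+C(\cdots)\delta(z')^{\,n+2}$, dominated by the ${\sf F}-\mathcal M$ term since $\delta(z')\le1$. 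Inserting these bounds into the reduction above, and substituting $\delta(z')\approx\va$ (resp.\ $\delta(z')\approx|z'|^{1+\gamma}$) in the regime $|z'|\le\va^{1/(1+\gamma)}$ (resp.\ $\va^{1/(1+\gamma)}<|z'|\le1/2$), gives \eqref{1.46} and \eqref{1.47}.

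\textbf{Main difficulty.} The delicate point is closing the energy estimate in the narrow region: the Poincar\'e constant in the thin $x_n$-direction is comparable, $\sim\delta(z')$, to the length scale over which $\xi$ must vary, so the cutoff contribution $\int|\nabla\xi|^2|{\sf w}_\ell|^2$ is a priori of the same order as the Dirichlet energy and cannot simply be absorbed; producing the genuinely favorable power $\delta(z')^{\,n-2/(1+\gamma)+2}$ in front of $\|{\sf w}_\ell\|_{L^2(\Omega_1)}^2$ requires exploiting the shrinking of the cross-sections $|\widehat\Omega_t(z)|\sim t^{n-1}\delta(z')$ together with the rescaling/iteration of \cite{bll,CL}, not a single Caccioppoli step. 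The remaining effort --- keeping track of the several powers of $\delta(z')$ coming out of Proposition \ref{prop1} and checking that, after $s\approx\delta(z')$, they collapse to the clean exponents $\delta(z')^{\,n-2/(1+\gamma)}$ and $\delta(z')^{\,n}$ --- is routine but lengthy.
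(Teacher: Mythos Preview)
Your overall strategy coincides with the paper's: a Caccioppoli step for \eqref{Matrix} with a tangential cutoff, the thin Poincar\'e inequality in the $x_n$-direction, Proposition~\ref{prop1} and \eqref{par_1}--\eqref{par_n} to size the data, and Lemma~\ref{lem2} to control the top scale. Your estimates of $\int|{\sf F}-\mathcal M|^2$ and $\int|{\sf H}|^2$ at the scale $s\approx\delta(z')$ are correct and match \eqref{1.6} and \eqref{1.44} once $s$ is specialized.

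The gap is in the iteration itself. With your stated range $\delta(z')\le t<s\le 2\delta(z')$ the Poincar\'e constant $\sim\delta(z')$ and the cutoff scale $s-t\le\delta(z')$ are comparable, so the Caccioppoli/Poincar\'e combination yields only
\[
F(t)\le\Big(\frac{c\,\delta(z')}{s-t}\Big)^{2}F(s)+\text{sources},
\]
with a coefficient that is \emph{not} $<1$ for any fixed choice in that range; there is no $\theta<1$ to ``hole-fill'' away. The paper (following \cite{bll,CL}) instead runs the iteration over a long chain of radii $t_\tau=\delta(z')+2c\,\tau\,\delta(z')$, $\tau=0,1,\dots,k$, with $k\sim\delta(z')^{-\gamma/(1+\gamma)}$, so that each step has $s-t=2c\,\delta(z')$ and the coefficient becomes exactly $1/4$. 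After $k$ steps one reaches $t_k\sim\delta(z')^{1/(1+\gamma)}$ and uses Lemma~\ref{lem2} to bound $F(t_k)$; the prefactor $(1/4)^k$ is exponentially small in a negative power of $\delta(z')$, which is what actually produces the favorable factor $\delta(z')^{\,n-2/(1+\gamma)+2}$ in front of $\|{\sf w}_\ell\|_{L^2(\Omega_1)}^2$. Your alternative claim that ``a local $L^2$-estimate for the rescaled system combined with Lemma~\ref{lem2}'' gives
\[
\delta(z')^{-2}\int_{\widehat\Omega_{2\delta(z')}(z)}|{\sf w}_\ell|^2\le C\,\delta(z')^{\,n-\frac{2}{1+\gamma}+2}\big(\|{\sf w}_\ell\|_{L^2(\Omega_1)}^2+\cdots\big)
\]
is circular: after Poincar\'e the left side is $\int_{\widehat\Omega_{2\delta(z')}}|\nabla{\sf w}_\ell|^2$, which is precisely (a dilate of) the quantity you are trying to bound, and neither rescaling nor Lemma~\ref{lem2} supplies any power of $\delta(z')$ on its own. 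One also has to check that the source terms $G_{12}(t_{\tau+1})$ grow only polynomially in $\tau$ (as in \eqref{1.6}), so that $\sum_\tau(1/4)^\tau G_{12}(t_{\tau+1})$ converges; your single-scale computation captures the $\tau=0$ term but not this summation.
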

\begin{proof}
For simplicity, we assume that $\psi\equiv0$.
Indeed, for $0 < t < s < 1/2$, let $\eta$ be a cut-off function satisfying $0\leq\eta(x')\leq 1$,
\begin{align*}
\eta(x')=
\left\{
  \begin{array}{ll}
    1, & \hbox{if}~|x'-z'|<t  \\
    0, & \hbox{if}~|x'-z'|>s
  \end{array}
\right. ,\quad |\nabla\eta(x')|\leq \frac{2}{s-t}.
\end{align*}
Multiplying \eqref{Matrix} by $\eta^{2}{\sf w}_{\ell}$ and using the integration by parts, one has
\begin{align}
\begin{aligned}\label{ibp}
&\sum\limits_{\alp,\bt,i,j}\int_{\widehat{\Omega}_{s}(z)}\eta^2 A_{ij}^{\alpha\beta}(x)\partial_{\beta}w_{\ell}^{(j)}\partial_{\alpha}w_{\ell}^{(i)}\,dx\\
=&-\sum\limits_{\alp,\bt,i,j}\int_{\widehat{\Omega}_{s}(z)} A_{ij}^{\alpha\beta}(x) \partial_{\beta}w_{\ell}^{(j)} w_{\ell}^{(i)} 2\eta\partial_{\alpha}\eta\,dx
-\sum\limits_{\alp,i,j}\int_{\widehat{\Omega}_{s}(z)} B_{ij}^{\alpha}(x)w_{\ell}^{(j)}\partial_{\alpha}(\eta^2 w_{\ell}^{(i)})\\
&-\sum\limits_{\bt,i,j}\int_{\widehat{\Omega}_{s}(z)}C_{ij}^{\beta}(x)\partial_{\beta}w_{\ell}^{(j)}(\eta^{2}w_{\ell}^{(i)})
-\sum\limits_{i,j}\int_{\widehat{\Omega}_{s}(z)}D_{ij}(x)w_{\ell}^{(j)}(\eta^2 w_{\ell}^{(i)})\,dx\\
&+\sum\limits_{i}\int_{\widehat{\Omega}_{s}(z)}H^{(i)}(\eta^2w_{\ell}^{(i)})\,dx
+\sum\limits_{\alp,i}\int_{\widehat{\Omega}_{s}(z)} (F_{i}^{(\alpha)}-\mathcal{M}_{i}^{(\alpha)})\partial_{\alpha}(\eta^2 w_{\ell}^{(i)})
\,dx.
\end{aligned}
\end{align}
Now we can bound with \eqref{AUP}, Young's inequality $2ab\leq\zeta a^2+\frac{b^2}{\zeta},$ and the properties of $\eta$,
\begin{align}\label{A1}
&\sum\limits_{\alp,\bt,i,j}\left|  \int_{\widehat{\Omega}_{s}(z)}
 A_{ij}^{\alpha\beta}(x) \partial_{\beta}w_{\ell}^{(j)}
 w_{\ell}^{(i)} 2\eta\partial_{\alpha}\eta\,dx\right|\nonumber\\
\quad\leq&
\zeta_{1}\Lambda\int_{\widehat{\Omega}_{s}(z)}\eta^2|\nabla {\sf w}_{\ell}|^2\,\ud x
+\frac{4\Lambda}{\zeta_1(s-t)^2}\int_{\widehat{\Omega}_{s}(z)}|{\sf w}_{\ell}|^2\,\ud x,
\end{align}
combining with \eqref{BCD}, we deduce
\begin{align}\label{B1}
\sum\limits_{\alp,i,j}\left| \int_{\widehat{\Omega}_{s}(z)} B_{ij}^{\alpha}(x)w_{\ell}^{(j)}\partial_{\alpha}(\eta^2 w_{\ell}^{(i)})\,dx\right|
=&\sum\limits_{\alp,i,j}\left| \int_{\widehat{\Omega}_{s}(z)} B_{ij}^{\alpha}(x)w_{\ell}^{(j)}(2\eta\partial_{\alpha}\eta w_{\ell}^{(i)}+\eta^2\ptl_{\alp}w_{\ell}^{(i)})\,dx\right|
\nonumber\\
\leq& \frac{\zeta_2\kappa_{3}}{2}\int_{\widehat{\Omega}_{s}(z)}\eta^2|\nabla {\sf w}_{\ell}|^2\,dx
+\frac{C}{\zeta_{2}(s-t)^2}\int_{\widehat{\Omega}_{s}(z)}|{\sf w}_{\ell}|^2\,dx.
\end{align}
Similarly, using Young's inequality again, we have
\begin{align}\label{C1}
\sum\limits_{\bt,i,j}\left|
\int_{\widehat{\Omega}_{s}(z)}C_{ij}^{\beta}(x)\partial_{\beta}w_{\ell}^{(j)}\eta^{2}w_{\ell}^{(i)}\,dx  \right|
\leq&\frac{\zeta_{3}\kappa_{3}}{2}\int_{\widehat{\Omega}_{s}(z)}\eta^2|\nabla {\sf w}_{\ell}|^2\,dx+\frac{C}{\zeta_{3}(s-t)^2}
\int_{\widehat{\Omega}_{s}(z)}|{\sf w}_{\ell}|^2\,dx.
\end{align}
Noticing $0<s-t<1,$ we obtain
\begin{align}\label{D1}
\sum\limits_{i,j}\left|\int_{\widehat{\Omega}_{s}(z)}D_{ij}(x)w_{\ell}^{(j)}\eta^2 w_{\ell}^{(i)}\,dx\right|\leq\frac{C}{(s-t)^2}\int_{\widehat{\Omega}_{s}(z)}|{\sf w}_{\ell}|^2\,dx.
\end{align}
Recalling \eqref{ul} and \eqref{fh}, we also have
\begin{align}\label{h1}
\sum\limits_{i}\left|\int_{\widehat{\Omega}_{s}(z)}H^{(i)}\eta^2w_{\ell}^{(i)}\,dx\right|
\leq
\frac{C}{(s-t)^2}\int_{\widehat{\Omega}_{s}(z)}|{\sf w}_{\ell}|^2\,dx
+(s-t)^2\sum\limits_{i}\int_{\widehat{\Omega}_{s}(z)}|H^{(i)}|^2\,dx.
\end{align}
For the last term in the right hand side of \eqref{ibp}, using the Young's inequality again we obtain
\begin{align}\label{f1}
&\sum\limits_{\alp,i}\left|\int_{\widehat{\Omega}_{s}(z)}
(F_{i}^{(\alpha)}-\mathcal{M}_{i}^{(\alpha)})\partial_{\alpha}(\eta^2 w_{\ell}^{(i)})
\,dx  \right|\nonumber\\
\leq&\frac{\zeta_{4}}{2}\int_{\widehat{\Omega}_{s}(z)}\eta^{2}|\nabla {\sf w}_{\ell}|^2\,dx
+\frac{C}{\zeta_{4}}\sum\limits_{\alp,i}\int_{\widehat{\Omega}_{s}(z)} |F_{i}^{(\alpha)}-\mathcal{M}_{i}^{(\alpha)}|^2\,dx
+\frac{C}{\zeta_{4}(s-t)^2}\int_{\widehat{\Omega}_{s}(z)}|{\sf w}_{\ell}|^2\,dx.
\end{align}
Choosing $0<\zeta_{i}<1, i=1,2,3,4$ satisfy
\begin{align*}
\Lambda\zeta_{1}+\frac{\kappa_{3}}{2}\zeta_2+\frac{\kappa_{3}}{2}\zeta_{3}+\frac{1}{2}\zeta_{4}<\frac{\lambda}{2}.
\end{align*}
In view of the strong ellipticity condition \eqref{LHC},  we obtain
\begin{align*}
\lambda\int_{\widehat{\Omega}_{s}(z)}\eta^{2}|\nabla {\sf w}_{\ell}|^{2}\,dx
\leq&
\sum\limits_{\alp,\bt,i,j}
\int_{\widehat{\Omega}_{s}(z)}\eta^{2} A_{ij}^{\alpha\beta}(x)\partial_{\beta} w_{\ell}^{(j)}\partial_{\alpha}w_{\ell}^{(i)}\,dx,
\end{align*}
this, combining with \eqref{ibp}-\eqref{f1}, yields
\begin{align}\label{1.4}
\int_{\widehat{\Omega}_{t}(z)} |\nabla {\sf w}_{\ell}|^2\,dx
\leq& \frac{C}{(s-t)^{2}}\int_{\widehat{\Omega}_{s}(z)}|{\sf w}_{\ell}|^2\,dx
+C(s-t)^2\sum\limits_{i}\int_{\widehat{\Omega}_{s}(z)}|H^{(i)}|^2\,dx\nonumber\\
\quad&+C\sum\limits_{\alp,i}\int_{\widehat{\Omega}_{s}(z)} |F_{i}^{(\alpha)}-\mathcal{M}_{i}^{(\alpha)}|^2\,dx.
\end{align}
It follows from \eqref{par_1} and \eqref{par_n} that
\begin{align}\label{1.44}
\sum\limits_{i}\int_{\widehat{\Omega}_{s}(z)}|H^{(i)}|^2\,dx
\leq&
 C\left|\varphi^{(\ell)}(z',\varepsilon/2+h_{1}(z'))\right|^2 \int_{|x'-z'|<s}
 \left(\frac{1}{\va+|x'|^{1+\gamma}}\right)\,dx'\nonumber\\
\quad&+ \|\nabla\varphi^{(\ell)}\|_{L^{\infty}(\Gamma_1^+)}^2 \int_{|x'-z'|<s}
\left(\frac{|x'-z'|^2}{\va+|x'|^{1+\gamma}}\right)\,dx'.
\end{align}
\textbf{Case 1.} For $|z'|\leq \varepsilon^{\frac{1}{1+\gamma}}$ and $0< s<\varepsilon^{\frac{1}{1+\gamma}}$, we have
$\varepsilon\leq \delta(z')\leq C\varepsilon$. By a direct calculation, we have
\begin{align}\label{1.5}
\int_{\widehat{\Omega}_{s}(z)}|{\sf w}_{\ell}|^{2}\,dx
=\int_{\widehat{\Omega}_{s}(z)}\left|
\int_{-\frac{\varepsilon}{2}+h_2(x')}^{x_{n}}\partial_{n}w_{\ell}(x',x_{n})\,dx_{n}
\right|^2\,dx
\leq C\varepsilon^2\int_{\widehat{\Omega}_{s}(z)}|\nabla {\sf w}_{\ell}|^2\,dx,
\end{align}
and from \eqref{1.44}, we have
\begin{align*}
\sum\limits_{i}\int_{{\widehat{\Omega}_{s}(z)}}|H^{(i)}|^2\,dx
\leq
 C\left|\varphi^{(\ell)}(z',\frac{\va}{2}+h_{1}(z'))\right|^2 \frac{s^{n-1}}{\va}
 +C\|\nabla\varphi^{(\ell)}\|_{L^{\infty}(\Gamma_1^+)}^2\frac{s^{n+1}}{\va}:=G_{11}(s).
\end{align*}
By using \eqref{BCD}, \eqref{fh}, and \eqref{1.12} for any $\alp=1,\cdots,n,$ $i=1,\cdots, m$, we have
\begin{align*}
\left|F_{i}^{(\alpha)}-\mathcal{M}_{i}^{(\alpha)}\right|^2
\leq&\frac{C\kappa_{3}}{|\widehat{\Omega}_{s}(z)|^2}
\left([\nabla \wdt{\sf u}_{\ell}]_{\gamma;\widehat{\Omega}_{s}(z)}
\int_{\widehat{\Omega}_{s}(z)}
|x-y|^{\gamma}\,dy
+\delta(z')^{-1}\int_{\widehat{\Omega}_{s}(z)} |x-y|^{\gamma}\,dy
\right)^2+C\kappa_{3}\|\varphi^{(\ell)}\|_{L^{\infty}(\Gamma_{1}^{+})}
 \nonumber\\
\leq& C  \left( [\nabla \wdt{\sf u}_{\ell}]^2_{\gamma;\widehat{\Omega}_{s}(z)}
 +\delta(z')^{-2}\right)\left(\delta(z')^{2\gamma}+s^{2\gamma}\right),
 \end{align*}
thus, from Proposition \ref{prop1} we have
\begin{align}\label{1.6}
\sum\limits_{\alp,i}
\int_{\widehat{\Omega}_{s}(z)}
\left|F_{i}^{\alpha}-\mathcal{M}_{i}^{\alpha}\right|^2\,dx
\leq&
 C\left(\left|\varphi^{(\ell)}(z', \frac{\varepsilon}{2}+h_1(z'))\right|^2
+s^2\|\varphi^{(\ell)}\|_{C^{1,\gamma}(\Gamma_{1}^{+})}^2\right)\nonumber\\
&\quad \left(
\frac{s^{n+1}}{\varepsilon^{1+\frac{2}{1+\gamma}}}
+\frac{s^{n-1}}{\varepsilon^{\frac{2}{1+\gamma}-1}}
+\frac{s^{n+1-2\gamma}}{\varepsilon^{1+\frac{2}{1+\gamma}-2\gamma}}
+\frac{s^{n-1+2\gamma}}{\varepsilon^{1+\frac{2\gamma^{2}}{1+\gamma}}}
\right)
=:G_{12}(s).
\end{align}
Denote
$
F(t):=\int_{\widehat{\Omega}_{t}(z_1)}|\nabla{\sf w}_{\ell}|^{2} \ dx.
$
It follows from \eqref{1.4}, \eqref{1.5}, and \eqref{1.6} that
\begin{align}\label{1.13}
F(t)\leq \left(\frac{c_{1}\varepsilon}{s-t}\right)^2 F(s)+C(s-t)^2G_{11}(s)+CG_{12}(s).
\end{align}
Similarly as in \cite{CL}, let $k=(4c_{1}\varepsilon^{\frac{\gamma}{1+\gamma}})^{-1}$ and $t_{\tau}
=\delta(z')+2c_{1}\tau\varepsilon$, $\tau=0, 1, 2, \cdots, k$. It is easy to see from the definition of $G_{11}(s)$ and $G_{12}(s)$ that
\begin{align*}
G_{11}(t_{\tau+1})\leq
C\va^{n-2}\left(
\left|\varphi^{(\ell)}(z',\varepsilon/2+h_{1}(z'))\right|^2
+\va^{2}\|\nabla\varphi^{(\ell)}\|_{L^{\infty}(\Gamma_1^+)}^2
\right)(\tau+1)^{n+1},
\end{align*}
and
\begin{align*}
G_{12}(t_{\tau+1})\leq C\va^{n-\frac{2}{1+\gamma}}\left(\left|\varphi^{(\ell)}(z', \frac{\varepsilon}{2}+h_1(z'))\right|^2
+\va^{2}\left\|\varphi^{(\ell)}\right\|_{C^{1,\gamma}(\Gamma_{1}^+)}^2\right)(\tau+1)^{n+3} .
\end{align*}
Taking $s=t_{i+1}$ and $t=t_{i}$ in \eqref{1.13}, we have the following iteration formula
\begin{align*}
F(t_{\tau})\leq&\frac{1}{4}F(t_{\tau+1})
+C\va^{n}\left(
\left|\varphi^{(\ell)}(z',\varepsilon/2+h_{1}(z'))\right|^2
+\va^{2}\|\nabla\varphi^{(\ell)}\|_{L^{\infty}(\Gamma_1^+)}^2
\right)(\tau+1)^{n+1}
\nonumber\\
&\quad+C\va^{n-\frac{2}{1+\gamma}}\left(\left|\varphi^{(\ell)}(z', \frac{\varepsilon}{2}+h_1(z'))\right|^2
+\va^{2}\left\|\varphi^{(\ell)}\right\|_{C^{1,\gamma}(\Gamma_{1}^+)}^2
\right)(\tau+1)^{n+3}\\
\leq& C\va^{n-\frac{2}{1+\gamma}}\left(|\varphi^{(\ell)}(z', \frac{\varepsilon}{2}+h_1(z'))|^2
+\va^{2}\left\|\varphi^{(\ell)}\right\|_{C^{1,\gamma}(\Gamma_{1}^+)}^2
\right)(\tau+1)^{n+3},
\end{align*}
after $k$ iterations, and by virtue of Lemma \ref{lem2}, we have
\begin{align*}
F(t_{0})\leq (\frac{1}{4})^{k}F(t_{k})
&+C\va^{n-\frac{2}{1+\gamma}}\left(|\varphi^{(\ell)}(z', \frac{\varepsilon}{2}+h_1(z'))|^2+\va^{2}\left\|\varphi^{(\ell)}\right\|_{C^{1,\gamma}(\Gamma_{1}^+)}^2
\right)
 \sum_{i=0}^{k-1}(\frac{1}{4})^i(i+1)^{n+3} \nonumber\\
\leq& C\va^{n-\frac{2}{1+\gamma}}\left(|\varphi^{(\ell)}(z', \frac{\varepsilon}{2}+h_1(z'))|^2
+\va^{2}(\left\|\varphi^{(\ell)}\right\|_{C^{1,\gamma}(\Gamma_{1}^+)}^2+\left\|{\sf w}_{l}\right\|^2_{L^{2}(\Omega_1)})
\right).
\end{align*}
This implies Lemma \ref{lemmabddlocal} with $|z'|\leq \varepsilon^{\frac{1}{1+\gamma}}$ .

{\bf Case 2.} For $\varepsilon^{\frac{1}{1+\gamma}}\leq|z'|\leq\,\frac{1}{2}$ and $0<s<|z'|$, we have $\frac{1}{C}|z'|^{1+\gamma}\leq\delta(z')\leq\,C|z'|^{1+\gamma}$. Estimates \eqref{1.5} and \eqref{1.6}  become, respectively,
\begin{align*}
\int_{\widehat{\Omega}_{s}(z)}|{\sf w}_{\ell}|^{2}\ dx
\leq&\,C|z'|^{2(1+\gamma)}\int_{\widehat{\Omega}_{s}(z')}|\nabla{{\sf w}_{\ell}}|^{2}\ dx,
 \quad\mbox{if}~\,0<s<\frac{2}{3}|z'|,\\
\sum\limits_{i}\int_{{\widehat{\Omega}_{s}(z)}}|H^{(i)}|^2\,dx
\leq&
C\left|\varphi^{(\ell)}(z',\frac{\va}{2}+h_{1}(z'))\right|^2 \frac{s^{n-1}}{|z'|^2}
 +C\|\nabla\varphi^{(\ell)}\|_{L^{\infty}(\Gamma_1^+)}^2\frac{s^{n+1}}{|z'|^2}:=G_{21}(s),
\end{align*}
and
\begin{align*}
\sum\limits_{\alp,i}\int_{\widehat{\Omega}_{s}(z)}
\left|F_{i}^{(\alpha)}-\mathcal{M}_{i}^{(\alpha)}\right|^2\,dx
\leq & C\left(\big|\varphi^{(\ell)}(z', \frac{\varepsilon}{2}+h_1(z'))\big|^2
 +s^2\left\|\varphi^{(\ell)}\right\|_{C^{1,\gamma}(\Gamma_{1}^{+})}^2\right)\nonumber\\
 \quad&\left(
\frac{s^{n+1}}{|z'|^{3+\gamma}}
+\frac{s^{n-1}}{|z'|^{-1-\gamma}}
+\frac{s^{n+1-2\gamma}}{|z'|^{1-\gamma-2\gamma^2}}
+\frac{s^{n-1+2\gamma}}{|z'|^{\gamma-1+2\gamma^2}}
\right)
\nonumber\\
=&:G_{22}(s).
\end{align*}
Let $k=(4c_{2}|z'|^{\gamma})^{-1}$ and $t_{\tau}=\delta(z')+2c_{2}\iota\,|z'|^{1+\gamma}$, $\tau=0,1,2,\cdots,k$, one has
\begin{align*}
G_{21}(s)\leq &
C|z'|^{(1+\gamma)(n-2)}\\
\quad&\left(\left|\varphi^{(\ell)}(z',\varepsilon/2+h_{1}(z'))\right|^2
 +C|z'|^{2(1+\gamma)} \|\nabla\varphi^{(\ell)}\|_{L^{\infty}(\Gamma_1^+)}^2\right)(\tau+1)^{n+1},
\end{align*}
and
\begin{align*}
G_{22}(t_{\tau+1}) &\leq C|z'|^{(1+\gamma)(n-\frac{2}{1+\gamma})}\nonumber\\
&\quad\left(\big|\varphi^{(\ell)}(z', \frac{\varepsilon}{2}+h_1(z'))\big|^2
+C|z'|^{2(1+\gamma)}\left\|\varphi^{(\ell)}\right\|_{C^{1,\gamma}(\Gamma_1^+)}^2\right)(\tau+1)^{n+3} .
\end{align*}
Then, we obtain that, for $0<t<s<\frac{2}{3}|z'|$,
\begin{align*}
F(t_{\tau})&\leq\frac{1}{4}F(t_{\tau+1})
+ C|z'|^{(1+\gamma)(n-\frac{2}{1+\gamma})}\nonumber\\
&\quad\left(\big|\varphi^{(\ell)}(z', \frac{\varepsilon}{2}+h_1(z'))\big|^2
+C|z'|^{2(1+\gamma)}
\left\|\varphi^{(\ell)}\right\|_{C^{1,\gamma}(\Gamma_1^+)}^2\right)(\tau+1)^{n+3},
\end{align*}
after $k$ iterations, and using Lemma \ref{lem2} again, we have
\begin{align*}
F(t_{0})
&\leq C|z'|^{(1+\gamma)(n-\frac{2}{1+\gamma})}\nonumber\\
&\quad\left(\big|\varphi^{(\ell)}(z', \frac{\varepsilon}{2}+h_1(z'))\big|^2
+C|z'|^{2(1+\gamma)}(\left\|\varphi^{(\ell)}\right\|_{C^{1,\gamma}(\Gamma_{1}^+)}^2+\|{\sf w}_{\ell}\|^2_{L^{2}(\Omega_1)})\right).
\end{align*}
This implies Lemma \ref{lemmabddlocal} with $|z'|\geq\varepsilon^{\frac{1}{1+\gamma}}$.
\end{proof}

\begin{lemma}\label{lem1}

Let $z=(z',z_{n})\in \Omega_{1/2}$ as in \eqref{Omega}, $\delta(z')$ be as in \eqref{delta}
and $\widehat{\Omega}_{\delta(z')}(z)$ be as in \eqref{hatomega}.
Let $h_{1},$ $h_{2}$ satisfy  \eqref{1.42}-\eqref{1.50},
$\Gamma_{1}^{+},$ $\Gamma_{1}^{-}$ be as in \eqref{boundary} and $\varphi,$ $\psi$
be as in \eqref{1.51}.
Let $\va$ be as in \eqref{vare} and ${\sf w}_{\ell}$ be as in
\eqref{Matrix}, $\ell=1,\cdots, m$. Then there exists a positive constant $C$ independent of
$\va$, such that, for $ |z'|\leq \va^{\frac{1}{1+\gamma}}$,
\begin{align*}
\left|\nabla {\sf w}_{\ell}(z',z_n)\right|
\leq&C\va^{-\frac{1}{1+\gamma}}\left|\varphi^{(\ell)}(z', \frac{\varepsilon}{2}+h_1(z'))-\psi^{l}(z', -\frac{\varepsilon}{2}+h_2(z'))\right|
\\
&+C\va^{-\frac{\gamma}{1+\gamma}}
\left(
\|{\sf w}_{\ell}\|_{L^2(\Omega_1)}
+
\|\varphi^{(\ell)}\|_{C^{1,\gamma}(\Gamma_1^{+})}
+\|\psi^{(\ell)}\|_{C^{1,\gamma}(\Gamma_1^{-})}
\right),
\end{align*}
and for $\va^{\frac{1}{1+\gamma}}<|z'|<\frac{1}{2}$,
\begin{align*}
\left|\nabla {\sf w}_{\ell}(z',z_n)\right|
\leq& C|z'|^{-1}\left|\varphi^{(\ell)}(z', \frac{\varepsilon}{2}+h_1(z'))-\psi^{(\ell)}(z', -\frac{\varepsilon}{2}+h_2(z'))\right|
  \\
&+C|z'|^{-\gamma}
\left(
\|{\sf w}_{\ell}\|_{L^2(\Omega_1)}
+\|\varphi^{(\ell)}\|_{C^{1,\gamma}(\Gamma_1^{+})}
+\|\psi^{(\ell)}\|_{C^{1,\gamma}(\Gamma_1^{-})}
\right).
\end{align*}
Consequently, by \eqref{par_1} and \eqref{par_n}, we have for sufficiently small $\va$ and $z\in \Omega_{1/2}$,
\begin{align}\label{1.48}
|\nabla {\sf v}_{\ell}(z)|\leq&
\frac{C\left|\varphi^{(\ell)}(z', \frac{\varepsilon}{2}+h_1(z'))-\psi^{(\ell)}(z', -\frac{\varepsilon}{2}+h_2(z'))\right|}
{\va+|z'|^{1+\gamma}}
\nonumber  \\
&+C\left(
\|{\sf v}_{\ell}\|_{L^2(\Omega_1)}
+\|\varphi^{(\ell)}\|_{C^{1,\gamma}(\Gamma_1^{+})}
+\|\psi^{(\ell)}\|_{C^{1,\gamma}(\Gamma_1^{-})}\right).
\end{align}
Moreover, if $\varphi^{(\ell)}(\vec{0}_{n-1},\frac{\va}{2})\ne \psi^{(\ell)}(\vec{0}_{n-1},-\frac{\va}{2})$, then
there exists a positive constant $C$ independent of $\va$, such that,
for any $ z_{n}\in (-\frac{\va}{2},\frac{\va}{2})$,
\begin{align*}
\left|\nabla{\sf v}_{\ell}(\vec{0}_{n-1}, z_{n})\right|
\geq\frac{|\varphi^{(\ell)}(\vec{0}_{n-1},z_n)-\psi^{(\ell)}(\vec{0}_{n-1},z_n)|}{C\va}.
\end{align*}
\end{lemma}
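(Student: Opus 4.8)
The plan is to prove \textbf{Lemma \ref{lem1}} in three stages: first derive the pointwise bound for $|\nabla{\sf w}_\ell|$ from the local energy estimates of Lemma \ref{lemmabddlocal}, then combine with the auxiliary-function bounds \eqref{par_1}–\eqref{par_n} to get \eqref{1.48}, and finally establish the lower bound along the segment $\{(\vec{0}_{n-1},z_n)\}$. For the first stage, I fix $z=(z',z_n)\in\Omega_{1/2}$ and rescale $\widehat\Omega_{\delta(z')}(z)$ to a region of unit size: in the case $|z'|\le\va^{1/(1+\gamma)}$ one sets $y=(x-z)/\delta(z')$ with $\delta(z')\sim\va$, and in the case $|z'|>\va^{1/(1+\gamma)}$ one uses $\delta(z')\sim|z'|^{1+\gamma}$. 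Writing $\hat{\sf w}_\ell(y):={\sf w}_\ell(z+\delta(z')y)$, the system \eqref{Matrix} transforms into a general elliptic type system of the form \eqref{system} on a unit-scale domain $\widehat Q$ whose coefficients still satisfy \eqref{LHC}, \eqref{AUP}, \eqref{BCD} uniformly (the $C^\gamma$ seminorms of the rescaled coefficients are only \emph{smaller}), with right-hand side data $\widehat{\sf F}(y)=\delta(z')({\sf F}-\mathcal M)(z+\delta(z')y)$ and $\widehat{\sf H}(y)=\delta(z')^2{\sf H}(z+\delta(z')y)$. Then I apply the interior/boundary $C^{1,\gamma}$ estimate of Theorem \ref{C1Gamma} (the portion of $\partial\widehat Q$ coming from $\Gamma_1^\pm$ is $C^{1,\gamma}$ with controlled norm, since $\delta(z')^{-1}$ times the $C^{1,\gamma}$-norm of the rescaled defining functions stays bounded by \eqref{1.42}–\eqref{1.50}), yielding
\[
\|\nabla\hat{\sf w}_\ell\|_{L^\infty(\widehat Q')}\le C\Big(\|\hat{\sf w}_\ell\|_{L^\infty(\widehat Q)}+[\widehat{\sf F}]_{\gamma,\widehat Q}+\|\widehat{\sf H}\|_{L^\infty(\widehat Q)}\Big).
\]
Undoing the scaling gives $|\nabla{\sf w}_\ell(z)|=\delta(z')^{-1}|\nabla\hat{\sf w}_\ell(0)|$, while $\|\hat{\sf w}_\ell\|_{L^\infty(\widehat Q)}$ is controlled via $\|\hat{\sf w}_\ell\|_{L^2}$ plus its $C^{1,\gamma}$ seminorm (a standard interpolation/absorption, or directly by Theorem \ref{them2} with $p>n$), and $\|\hat{\sf w}_\ell\|_{L^2(\widehat Q)}^2=\delta(z')^{-n}\int_{\widehat\Omega_{\delta(z')}(z)}|{\sf w}_\ell|^2\le C\delta(z')^{-n+2}\int_{\widehat\Omega_{\delta(z')}(z)}|\nabla{\sf w}_\ell|^2$ by a Poincaré inequality in the thin direction. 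Feeding in the energy bounds \eqref{1.46}–\eqref{1.47} and the seminorm control of $\widehat{\sf F},\widehat{\sf H}$ (which reduce, after scaling, to Proposition \ref{prop1} and \eqref{par_1}–\eqref{par_n} evaluated at scale $s=\delta(z')$), a bookkeeping of the powers of $\va$ and $|z'|$ produces exactly the two displayed pointwise bounds for $|\nabla{\sf w}_\ell|$; the key cancellation to check is that the exponent $n-\tfrac{2}{1+\gamma}$ in \eqref{1.46} combines with $\delta(z')^{-n}$ from the $L^2$-normalization and $\delta(z')^{-2}$ from $|\nabla{\sf w}_\ell|=\delta(z')^{-1}|\nabla\hat{\sf w}_\ell|$ squared to give precisely $\delta(z')^{-2/(1+\gamma)}=\va^{-2/(1+\gamma)}$.

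For \eqref{1.48} I simply write $\nabla{\sf v}_\ell=\nabla{\sf w}_\ell+\nabla\wdt{\sf u}_\ell$ and bound each term: the estimate just obtained for $\nabla{\sf w}_\ell$, together with $\delta(z')\sim\va+|z'|^{1+\gamma}$ (valid on $\Omega_{1/2}$, combining the two regimes $|z'|\lessgtr\va^{1/(1+\gamma)}$), gives the first term with denominator $\va+|z'|^{1+\gamma}$ and a lower-order remainder; for $\nabla\wdt{\sf u}_\ell$ I invoke \eqref{par_1} and \eqref{par_n} directly, noting $|x'|^\gamma/(\va+|x'|^{1+\gamma})\le C/(\va+|x'|^{1+\gamma})^{1/(1+\gamma)}\le$ the stated form after adjusting constants, and absorbing $\|{\sf w}_\ell\|_{L^2}\le\|{\sf v}_\ell\|_{L^2}+\|\wdt{\sf u}_\ell\|_{L^2}\le\|{\sf v}_\ell\|_{L^2}+C(\|\varphi^{(\ell)}\|_{C^{1,\gamma}}+\|\psi^{(\ell)}\|_{C^{1,\gamma}})$ into the right-hand side. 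This is routine once the $\nabla{\sf w}_\ell$ bound is in hand.

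For the lower bound along $x'=\vec{0}_{n-1}$: here $\delta(\vec{0}_{n-1})=\va$ and by \eqref{par_n}, $|\partial_n\wdt{\sf u}_\ell(\vec{0}_{n-1},z_n)|\ge c\,|\varphi^{(\ell)}(\vec{0}_{n-1},\va/2)-\psi^{(\ell)}(\vec{0}_{n-1},-\va/2)|/\va$, which is large when the data differ. The strategy is to show $|\nabla{\sf w}_\ell(\vec{0}_{n-1},z_n)|$ is a \emph{lower-order} correction, of size $O(\va^{-\gamma/(1+\gamma)})=o(\va^{-1})$, as just proved — hence $|\nabla{\sf v}_\ell|\ge|\partial_n\wdt{\sf u}_\ell|-|\nabla{\sf w}_\ell|\ge c|\varphi^{(\ell)}(\vec 0,\va/2)-\psi^{(\ell)}(\vec 0,-\va/2)|/\va - C\va^{-\gamma/(1+\gamma)}(\|{\sf w}_\ell\|_{L^2}+\|\varphi^{(\ell)}\|_{C^{1,\gamma}}+\|\psi^{(\ell)}\|_{C^{1,\gamma}})$, and for $\va$ small the first term dominates, giving the claimed bound with a possibly smaller constant $C$. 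One caveat to handle: the difference $\varphi^{(\ell)}(\vec 0,\va/2)-\psi^{(\ell)}(\vec 0,-\va/2)$ must itself be comparable (up to $\va$-dependent corrections controlled by the $C^{1,\gamma}$ norms) to the nonzero quantity $\varphi^{(\ell)}(\vec{0}_{n-1},\va/2)-\psi^{(\ell)}(\vec{0}_{n-1},-\va/2)$ appearing in the statement — but these are literally the same quantity, so no issue arises; the only place care is needed is absorbing $\va^{-\gamma/(1+\gamma)}\|\varphi^{(\ell)}\|_{C^{1,\gamma}}$ against $\va^{-1}|\varphi^{(\ell)}(\vec 0,\va/2)-\psi^{(\ell)}(\vec 0,-\va/2)|$, which works because the left side is $o(\va^{-1})$ while the right side is a fixed positive multiple of $\va^{-1}$.

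\textbf{Main obstacle.} The genuinely delicate step is the first one: making the rescaling argument uniform in $\va$ and $z'$, and in particular verifying that after flattening the $C^{1,\gamma}$ boundary $\Gamma_1^\pm$ and rescaling to unit size, the constant $C$ in Theorem \ref{C1Gamma} / Theorem \ref{them2} does not degenerate. This requires checking that the rescaled defining functions $h_i^z(y'):=\delta(z')^{-1}\big(h_i(z'+\delta(z')^{1/(1+\gamma)}y')-h_i(z')-\cdots\big)$ (with the appropriate anisotropic scaling of the $x'$ and $x_n$ variables, since the domain is thin) have $C^{1,\gamma}$-norms bounded independently of $z'$ — this is where \eqref{h'} (the two-sided bound $\kappa_0|x'|^\gamma\le|\nabla h_1|$, $|\nabla h_2|\le\kappa_1|x'|^\gamma$) and \eqref{diff_delta_}–\eqref{low_delta} enter crucially, exactly as in Proposition \ref{prop1}. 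Getting the exponents of $\delta(z')$ to match on both sides — so that the $s^{1-\gamma}$, $s^{2-\gamma}$ etc. terms of Proposition \ref{prop1} at $s=\delta(z')$ collapse to the clean powers $\va^{-\gamma/(1+\gamma)}$ and $|z'|^{-\gamma}$ claimed — is the computational heart of the proof and where most of the effort lies; everything downstream is assembly.
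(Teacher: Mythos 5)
Your proposal follows essentially the same route as the paper: rescale $\widehat\Omega_{\delta(z')}(z)$ isotropically by $\delta(z')$ to a unit-scale domain $\mathcal{Q}_1$, apply Theorems \ref{them2} and \ref{C1Gamma} to the rescaled system together with a Poincar\'e inequality to control $\|\nabla\widehat{\sf w}_\ell\|_{L^\infty}$, undo the scaling, feed in \eqref{1.46}--\eqref{1.47} and Proposition \ref{prop1}, and finish both the upper bound \eqref{1.48} and the lower bound by the triangle inequality against $\nabla\wdt{\sf u}_\ell$ via \eqref{par_1}--\eqref{par_n}. One small slip in your ``main obstacle'' paragraph: the change of variables in the paper is the \emph{isotropic} one $x'-z'=\delta(z')y'$, $x_n=\delta(z')y_n$ (not an anisotropic $\delta(z')^{1/(1+\gamma)}$ scaling of $x'$), since the thin region has characteristic scale $\delta(z')$ in every direction once restricted to $|x'-z'|<\delta(z')$; the rescaled defining functions then have $C^{1,\gamma}$ norms controlled uniformly exactly as you anticipate.
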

\begin{proof}
For simplicity, we assume that $\psi\equiv0$. Given $z=(z',z_n)\in\Omega_{1/2}$, making the following change of variables on $\widehat{\Omega}_{\delta(z')}(z)$, as in \cite{CL},
\begin{equation*}
\left\{
\begin{aligned}
&x'-z'=\delta(z') y',\\
&x_n=\delta(z') y_n,
\end{aligned}
\right.
\end{equation*}
then $\widehat{\Omega}_{\delta(z')}(z)$ becomes $\mathcal{Q}_{1}$ of nearly unit size, where
\begin{align}
\begin{aligned}\label{LQ}
\mathcal{Q}_r=\Big\{y\in \R^n :& -\frac{\va}{2\dt(z')} +\frac{1}{\dt(z')}h_2(\dt(z') y' +z') \\
&  < y_n < \frac{\va}{2\dt(z')} +\frac{1}{\dt(z')}h_1(\dt(z') y' +z'), \, |y' |< r  \Big\},
\end{aligned}
\end{align}
for $r\leq1$, and the top and bottom boundaries become
\[
\widehat{\Gamma}^+_r:=\left\{ y\in \R^n \, : \,
y_n=\frac{\varepsilon}{2\delta(z')}+\frac{1}{\delta(z')}h_{1}(\delta(z') y'+z'),\quad|y'|<r\right\},\]
and
\[\widehat{\Gamma}^-_r:=\left\{y\in \R^n \, : \,y_n
=-\frac{\va}{2\delta(z')}+\frac{1}{\delta(z')}h_{2}(\delta(z') y'+z'), \quad |y'|<r \right\},\]
respectively. Let
\begin{align*}
\widehat{\sf w}_{\ell}(y',y_n):={\sf w}_{\ell}(\delta(z') y'+z', \delta(z') y_{n}),\,\,
 \widehat{\sf u}_{\ell}(y',y_n):=\widetilde{\sf u}_{\ell}(\delta(z') y'+z', \delta(z')y_{n}),\quad
 \mbox{for}\, (y', y_n)\in \mathcal{Q}_{1}.
\end{align*}
 It follows from \eqref{equ_w} that $\widehat{\sf w}_{\ell}(y)$ satisfies
\begin{align}\label{equ_hatw}
\left\{
  \begin{array}{ll}
\partial_{\alpha}\left(\widehat{A}_{ij}^{\alpha\beta}\partial_{\beta}\widehat{w}_{\ell}^{(j)}
+\widehat{B}_{ij}^{\alpha}\widehat{w}_{\ell}^{(j)}\right)
+\widehat{C}_{ij}^{\beta}\partial_{\beta}\widehat{w}_{\ell}^{(j)}
+\widehat{D}_{ij}\widehat{w}_{\ell}^{(j)}=\widehat{H}^{(i)}-\partial_{\alpha} \widehat{F}_{i}^{\alpha}, & \,\hbox{in}\,\mathcal{Q}_{1}, \\
    \widehat{\sf w}_{\ell}=0, & \,\hbox{on}\,\widehat{\Gamma}_{1}^{\pm},
  \end{array}
\right.
\end{align}
where
\begin{align}
\widehat{A}_{ij}^{\alpha\beta}(y):=A_{ij}^{\alpha\beta}(\delta y'+z', \delta y_{n}),&\quad
\widehat{B}_{ij}^{\alpha}(y):=\delta B_{ij}^{\alpha}(\delta y'+z', \delta y_{n}),\nonumber\\
\widehat{C}_{ij}^{\beta}(y):=\delta C_{ij}^{\beta}(\delta y'+z', \delta y_{n}),&\quad
\widehat{D}_{ij}(y):=\delta^2 D_{ij}(\delta y'+z', \delta y_{n}),\label{1.25}\\
\widehat{F}_{i}^{\alpha}(y):=
 \widehat{A}_{ij}^{\alpha\beta}(y)\partial_{\beta}\widehat{u}_{\ell}^{(j)}
+\widehat{B}_{ij}^{\alpha}(y)\widehat{u}_{\ell}^{(j)},&\quad
\widehat{H}^{(i)}(y):=\widehat{C}_{ij}^{\beta}(y)\partial_{\beta}\widehat{u}_{\ell}^{(j)}
-\widehat{D}_{ij}(y)\widehat{u}_{\ell}^{(j)}.\nonumber
\end{align}
It follows from Theorem \ref{them2} that
\begin{align}\label{1.26}
\|\widehat{\sf w}_{l}\|_{L^{\infty}(\mathcal{Q}_{1/2})}\leq C
\left(\|\widehat{\sf w}_{l}\|_{L^{2}(\mathcal{Q}_{1})}
+ [ \widehat{\sf F}]_{\gamma, \mathcal{Q}_1}+\|\widehat{\sf H}\|_{L^{\infty}(\mathcal{Q}_{1})}
\right).
\end{align}
Applying Theorem \ref{C1Gamma} for \eqref{equ_hatw} with \eqref{1.25} on $\mathcal{Q}_{1/2}$, we have
\begin{align*}
   \|\widehat{\sf w}_{\ell}\|_{C^{1,\gamma}(\mathcal{Q}_{1/4})}\leq C\left( \|\widehat{\sf w}_{\ell}\|_{L^{\infty}(\mathcal{Q}_{1/2})}
   + [ \widehat{\sf F}]_{\gamma, \mathcal{Q}_1}+\|\widehat{\sf H}\|_{L^{\infty}(\mathcal{Q}_{1})}
   \right).
\end{align*}
This, combining with \eqref{1.26} and using the Poincar\'{e} inequality, yields
\begin{align*}
\|\nabla {\widehat{\sf w}}_{\ell}\|_{L^{\infty}(\mathcal{Q}_{1/4})}\leq  C
\left(  \|\nabla{\widehat{\sf w}}_{l\ell}\|_{L^2(\mathcal{Q}_{1})}
+ [ \widehat{\sf F}]_{\gamma, \mathcal{Q}_1}+\|\widehat{\sf H}\|_{L^{\infty}(\mathcal{Q}_{1})}
\right).
\end{align*}
In the following proofs, we will briefly refer to $\delta(z')$ as $\delta$. Recalling back to the original region $\widehat{\om}_{\dt}(z)$,
we have
\begin{align*}
&\|\nabla {\widehat{\sf w}}_{\ell}\|_{L^{\infty}(\mathcal{Q}_{1/4})}
=\delta\|\nabla {{\sf w}}_{\ell}\|_{L^{\infty}(\widehat{\om}_{\dt/4}(z))},
\quad
\|\nabla{\widehat{\sf w}}_{\ell}\|_{L^2(\mathcal{Q}_{1})}
=\dt^{1-\frac{n}{2}}\|\nabla {\sf w}_{\ell}\|_{L^{2}(\widehat{\om}_{\dt}(z))},
\\
&\|\nabla{ \widehat{\sf u}}_{\ell}\|_{L^{\infty}(\mathcal{Q}_{1})}
=\dt\|\nabla{\widetilde{\sf u}}_{\ell}\|_{L^{\infty}(\widehat{\om}_{\dt}(z))},
\quad
[\nabla {\widehat{\sf u}}_{\ell}]_{\gamma, \mathcal{Q}_1}
=\delta^{1+\gamma}[\nabla{\widetilde{\sf u}}_{\ell}]_{\gamma,\widehat{\om}_{\dt}(z)},
\end{align*}
thus,
\begin{align*}
[\widehat{\sf F}]_{\gamma, \mathcal{Q}_1}
\leq C\delta^{1+\gamma}
\left(
\|\nabla\wdt{\sf u}_{\ell}\|_{L^{\infty}(\widehat{\om}_{\dt}(z))}
+[\nabla \wdt{\sf u}_{\ell}]_{\gamma,\widehat{\om}_{\dt}(z)}
+\|\wdt{\sf u}_{\ell}\|_{L^{\infty}(\widehat{\om}_{\dt}(z))}
\right),
\end{align*}
and
\begin{align*}
\|\widehat{\sf H}\|_{L^{\infty}(\mathcal{Q})}\leq C\delta^2
\left(
\|\nabla\wdt{\sf u}_{\ell}\|_{L^{\infty}(\widehat{\om}_{\dt}(z))}
+\|\wdt{\sf u}_{\ell}\|_{L^{\infty}(\widehat{\om}_{\dt}(z))}
\right).
\end{align*}
It follows that
\begin{align*}
&\|\nabla {\sf w}_{\ell}\|_{L^{\infty}(\widehat{\om}_{\dt/4}(z))}\\
\leq&
C\delta^{-\frac{n}{2}}
\|\nabla {\sf w}_{\ell}\|_{L^{2}(\widehat{\om}_{\dt}(z))}
+C\delta^{\gamma}\left(
[\nabla\widetilde{\sf u}_{\ell}]_{\gamma,\widehat{\om}_{\dt}(z)}
+\|\nabla\widetilde{\sf u}_{\ell}\|_{L^{\infty}(\widehat{\om}_{\dt}(z))}
+\|\wdt{\sf u}_{\ell}\|_{L^{\infty}(\widehat{\om}_{\dt}(z))}
\right).
\end{align*}
\textbf{Case 1.} For $0\leq |z'|\leq \va^{\frac{1}{1+\gamma}}$.

By \eqref{1.46} and Proposition \ref{prop1}, we have
\begin{align*}
\delta^{-\frac{n}{2}}\|\nabla{\sf w}\|_{L^{2}(\widehat{\om}_{\dt(z')}(z))}
\leq
\frac{C}{\va^{\frac{1}{1+\gamma}}}
\left|\varphi^{(\ell)}(z', \frac{\varepsilon}{2}+h_1(z'))\right|
+C(\left\|\varphi^{(\ell)}\right\|_{C^{1,\gamma}(\Gamma_{1}^{+})}
+\left\|{\sf w}_{\ell}\right\|_{L^{2}(\Omega_1)}),
\end{align*}
and
\begin{align*}
\delta^{\gamma}[\nabla\widetilde{\sf u}_{\ell}]_{\gamma,\widehat{\om}_{\dt}(z)}\leq
\frac{C}{\va^{\frac{1}{1+\gamma}}}
\left|\varphi^{(\ell)}(z', \varepsilon/2+h_1(z'))\right|
+C\left\|\varphi^{(\ell)}\right\|_{C^{1,\gamma}(\Gamma_{1}^{+})}
.
\end{align*}
By using \eqref{ul}, \eqref{par_1}, and \eqref{par_n}, we can obtain
\begin{align*}
\delta^{\gamma}\|\wdt{\sf u}_{\ell}\|_{L^{\infty}(\widehat{\om}_{\dt}(z))}
\leq \|\varphi^{(\ell)}\|_{L^{\infty}(\Gamma_{1^+})},
\end{align*}
and
\begin{align*}
\delta^{\gamma}\|\nabla \wdt{\sf u}_{\ell}\|_{L^{\infty}(\widehat{\om}_{\dt}(z))}
\leq
\frac{C}{\va^{1-\gamma}}
\left|\varphi^{(\ell)}(z',\varepsilon/2+h_{1}(z'))\right|
+C\|\nabla\varphi^{(\ell)}\|_{L^{\infty}(\Gamma_1^+)}.
\end{align*}
Therefore,
\begin{align*}
\left\|\nabla {\sf w}_{\ell}\right\|_{L^{\infty}\widehat{\om}_{\dt(z')/4}(z)}
\leq
\frac{C}{\va^{\frac{1}{1+\gamma}}}
\left|\varphi^{(\ell)}(z', \varepsilon/2+h_1(z'))\right|
+C(\left\|\varphi^{(\ell)}\right\|_{C^{1,\gamma}(\Gamma_{1}^{+})}
+\left\|{\sf w}_{\ell}\right\|_{L^{2}(\Omega_1)}).
\end{align*}

\textbf{Case 2.} For $\va^{\frac{1}{1+\gamma}}<|z'|<1/2$.
By \eqref{1.47} and Proposition \ref{prop1}, we have
\begin{align*}
\delta^{-\frac{n}{2}}\|\nabla{\sf w}_{\ell}\|_{L^{2}(\widehat{\om}_{\dt(z')}(z))}
\leq
\frac{C}{|z'|}
\left|\varphi^{(\ell)}(z', \frac{\varepsilon}{2}+h_1(z'))\right|
+C(\left\|\varphi^{(\ell)}\right\|_{C^{1,\gamma}(\Gamma_{1}^{+})}
+\left\|{\sf w}_{\ell}\right\|_{L^{2}(\Omega_1)}),
\end{align*}
and
\begin{align*}
\delta^{\gamma}[\nabla\widetilde{\sf u}_{\ell}]_{\gamma,\widehat{\om}_{\dt}(z)}\leq
\frac{C}{|z'|}
\left|\varphi^{(\ell)}(z', \varepsilon/2+h_1(z'))\right|
+C\left\|\varphi^{(\ell)}\right\|_{C^{1,\gamma}(\Gamma_{1}^{+})}
.
\end{align*}
By using \eqref{ul}, \eqref{par_1}, and \eqref{par_n}, we can obtain
\begin{align*}
\delta^{\gamma}\|\wdt{\sf u}_{\ell}\|_{L^{\infty}(\widehat{\om}_{\dt}(z))}
\leq \|\varphi^{(\ell)}\|_{L^{\infty}(\Gamma_{1^+})},
\end{align*}
and
\begin{align*}
\delta^{\gamma}\|\nabla \wdt{\sf u}_{\ell}\|_{L^{\infty}(\widehat{\om}_{\dt}(z))}
\leq
\frac{C}{|z'|^{1-\gamma^2}}
\left|\varphi^{(\ell)}(z',\varepsilon/2+h_{1}(z'))\right|
+C\|\nabla\varphi^{(\ell)}\|_{L^{\infty}(\Gamma_1^+)}.
\end{align*}
It follows that
\begin{align*}
\left\|\nabla {\sf w}_{\ell}\right\|_{L^{\infty}(\widehat{\om}_{\dt(z')/4}(z))}
\leq
\frac{C}{|z'|}
\left|\varphi^{(\ell)}(z', \varepsilon/2+h_1(z'))\right|
+C\left(\left\|\varphi^{(\ell)}\right\|_{C^{1,\gamma}(\Gamma_{1}^{+})}
+\left\|{\sf w}_{\ell}\right\|_{L^{2}(\Omega_1)}\right).
\end{align*}
Noticed that $|\nabla {\sf v}_{\ell}|\leq |\nabla {\sf w}_{\ell}|+|\nabla \wdt{\sf u}_{\ell}|$. By \eqref{par_1}, \eqref{par_n}, \eqref{1.46}, and \eqref{1.47}, we obtain \eqref{1.48}.

It is clear that if $\varphi^{(\ell)}\ne 0$, then
\begin{align*}
|\nabla {\sf v}_{\ell}(\vec{0}_{n-1},x_n)|\geq
|\nabla\wdt{\sf u}_{\ell}(\vec{0}_{n-1},x_n)|
-|\nabla{\sf w}_{\ell}(\vec{0}_{n-1},x_n)|
\geq\frac{|\varphi^{(\ell)}(\vec{0}_{n-1},x_n)|}{C\va}.
\end{align*}
The Lemma \ref{lem1} is proved with $\psi\equiv0$.
\end{proof}

\begin{proof}[Proof of Theorem \ref{Them1}]
By Lemma \ref{lem2}, Lemma \ref{lemmabddlocal}, and  Lemma \ref{lem1},
we have for $x\in \Omega_{1/2}$ as in \eqref{Omega},
\begin{align*}
|\nabla {\sf u}(x)|\leq \sum_{\ell=1}^{m}|\nabla{\sf v}_{\ell}(x)|
\leq&\frac{C\left|\varphi(x',\varepsilon/2+h_{1}(x'))
-\psi(x',-\varepsilon/2+h_2(x'))\right|}{\va+|x'|^{1+\gamma}}\\
+&C\left(\|\varphi\|_{C^{1,\gamma}(\Gamma_{1}^+)}+\|\psi\|_{C^{1,\gamma}(\Gamma_1^-)} +\|{\sf u}\|_{L^{2}(\Omega_1)}\right).
\end{align*}
If $\varphi^{(\ell)}(\vec{0}_{n-1},\va/2)\ne \psi^{(\ell)}(\vec{0}_{n-1},-\va/2)$
 for some integer $\ell$, then by Lemma \ref{lem1}, we can obtain
\begin{align*}
|\nabla{\sf u}(\vec{0}_{n-1},x_n)|\geq\frac{\left|\varphi^{(\ell)}(\vec{0}_{n-1},\va/2)
-\psi^{(\ell)}(\vec{0}_{n-1},-\va/2) \right|}{C\va}\quad
\forall\,x_{n}\in (-\frac{\va}{2},\frac{\va}{2}).
\end{align*}
The proof of Theorem \ref{Them1} is completed.
\end{proof}

\section{Proof of Corollary \ref{cor1}}\label{sc3}
Since the Lam\'{e} systems as in \eqref{lame} has many more applications in practice, such as
 shear modulus in high contrast linear elastic composites.
 This section establishes
 the gradient estimates of Lam\'{e} systems under the assumptions in Definition \ref{defn1}.
 We give a sketched proof of Corollary \ref{cor1} and only list its main ingredients.
In order to make the proof more clear and concise, we will prove the case when $n=2$.

Let ${\sf v}_1=(v_1^{(1)}, v_{1}^{(2)})=(u^{(1)},0)$ be a weak solution of
\begin{align}\label{Lv1}
\left\{
  \begin{array}{ll}
     \mathcal{L}_{\lambda_1,\mu_{1}} {\sf v}_{1}
     =\nabla \cdot(\mathbb{C}^{0} e({\sf v}_{1}))=0,&\,\hbox{in}\,\Omega_1,
 \\
   {\sf v}_{1}=(\varphi^{(1)},0 ), &\, \hbox{on}\,\Gamma_{1}^{+}, \\
   {\sf v}_{1}=(\psi^{(1)},0), &\, \hbox{on}\,\Gamma_{1}^{-},
  \end{array}
\right.
\end{align}
and
${\sf v}_{2}=(v_{2}^{(1)},v_{2}^{(2)})=(0,u^{(2)})$
 be a weak solution of
\begin{align}\label{Lv2}
\left\{
  \begin{array}{ll}
     \mathcal{L}_{\lambda_1,\mu_{1}} {\sf v}_{2}
     =\nabla \cdot(\mathbb{C}^{0} e({\sf v}_{2}))=0,&\,\hbox{in}\,\Omega_1,
 \\
   {\sf v}_{2}=(0, \varphi^{(2)}), &\,\hbox{on}\,\Gamma_{1}^{+}, \\
   {\sf v}_{2}=( 0,\psi^{(1)}), &\,\hbox{on}~\Gamma_{1}^{-}.
  \end{array}
\right.
\end{align}
It obvious that
\begin{align}\label{LD}
\sf{u}={v_1}+{v_2}\quad \mbox{and}\quad \nabla{\sf u}=\nabla{\sf v}_1+\nabla{\sf v}_2.
\end{align}
Then we still construct the auxiliary function $\wdt{\sf u}_{\ell}$ for $\ell =1,2$
 as shown in \eqref{ul} in Section \ref{sc1}:
 \begin{align}
 \begin{aligned}\label{Lu12}
\wdt{\sf u}_{1}:&= \Big(\varphi^{(1)}(x_1,\frac{\varepsilon}{2}+h_{1}(x_1))\bar{u}(x)
+\psi^{(1)}(x_1,-\frac{\varepsilon}{2}+h_2(x_1))(1-\bar{u}(x)),~0\Big),\\
\wdt{\sf u}_{2}:&= \Big(0,~\varphi^{(2)}(x_1,\frac{\varepsilon}{2}+h_{1}(x_1))\bar{u}(x)
+\psi^{(2)}(x_1,-\frac{\varepsilon}{2}+h_2(x_1))(1-\bar{u}(x))\Big).
\end{aligned}
 \end{align}
 So $|\nabla \wdt{\sf u}_{\ell}|$ also has the gradient estimates as in \eqref{par_1}-\eqref{par_n},
 and  the H\"{o}lder semi-norm estimates of $\nabla \wdt{\sf u}_{\ell}$ as in Proposition \ref{prop1}.

Denote ${\sf w}_{\ell}={\sf v}_{\ell}-\wdt{\sf u}_{\ell}$ for any $\ell=1,2$, which satisfies the following boundary value problem:
\begin{align}\label{Lwl}
\begin{aligned}
\left\{
  \begin{array}{ll}
\mathcal{L}_{\lambda_1, \mu_{1}}{\sf w}_{\ell}
=-\mathcal{L}_{\lambda_1,\mu_1}\wdt{\sf u}_{\ell}, &\quad \hbox{in}~\Omega_{1}, \\
   {\sf w}_{\ell}=0, & \quad\hbox{on}~\Gamma_{1}^{+},\\
   {\sf w}_{\ell}=0, &\quad \hbox{on}~\Gamma_{1}^{-}.
  \end{array}
\right.
\end{aligned}
\end{align}
Because the result in Corollary \ref{cor1} independent of $\ell$,
 we might as well consider only the case of $\ell=1$.

\begin{lemma}
Under the hypotheses of Lemma \ref{lem2}, and in addition that $w_{1}$ is the weak solution of \eqref{Lwl},
then there exists a positive constant  $C$ independent of $\va$, such that,
\begin{align}\label{L1.4}
\int_{\Omega_{1/2}}|\nabla w_{1}|^2\,dx
\leq C\left(\left\|{\sf w}_{1}\right\|_{L^{2}(\Omega_{1})}^{2}
+
\|\varphi^{(1)}\|^2_{C^{1,\gamma}(\Gamma_{1}^{+})}
+
\|\psi^{(1)}\|^2_{C^{1,\gamma}(\Gamma_{1}^{-})}
\right).
\end{align}
\end{lemma}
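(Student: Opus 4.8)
The plan is to run the energy argument of Lemma \ref{lem2} for the operator $\mathcal{L}_{\lambda_1,\mu_1}$, replacing the use of the strong ellipticity condition \eqref{LHC} by the coercivity of the elastic tensor $\mathbb{C}^0$ together with the first Korn identity, and exploiting that here the lower order coefficients vanish, so that the right-hand side of \eqref{Lwl} is purely in divergence form, $-\mathcal{L}_{\lambda_1,\mu_1}\wdt{\sf u}_1=\operatorname{div}(-\mathbb{C}^0 e(\wdt{\sf u}_1))$. As in Lemma \ref{lem2} I would first reduce to $\psi\equiv0$. Testing \eqref{Lwl} against ${\sf w}_1$ and integrating by parts over $\Omega_{1/2}$, the contributions on $\Gamma_{1/2}^{+}\cup\Gamma_{1/2}^{-}$ vanish because ${\sf w}_1=0$ there, leaving
\begin{align*}
\int_{\Omega_{1/2}}\mathbb{C}^0 e({\sf w}_1):e({\sf w}_1)\,dx
=\int_{\Omega_{1/2}}\mathbb{C}^0 e(\wdt{\sf u}_1):e({\sf w}_1)\,dx
+\int_{\Gamma}\big(\mathbb{C}^0(e({\sf w}_1)-e(\wdt{\sf u}_1))\vec{\nu}\big)\cdot{\sf w}_1\,ds,
\end{align*}
where $\Gamma:=\{|x_1|=1/2\}\cap\partial\Omega_{1/2}$ is the vertical boundary. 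Since $\mu_1>0$ and $\lambda_1+\mu_1>0$ (with $n=2$), one has $\mathbb{C}^0M:M\geq 2c_0|M|^2$ for symmetric $M$ with $c_0:=\min\{\mu_1,\mu_1+\lambda_1\}>0$, so the left side dominates $2c_0\int_{\Omega_{1/2}}|e({\sf w}_1)|^2$; combined with the Korn identity $\int_{\Omega_{1/2}}|\nabla{\sf w}_1|^2=2\int_{\Omega_{1/2}}|e({\sf w}_1)|^2-\int_{\Omega_{1/2}}(\operatorname{div}{\sf w}_1)^2-\int_{\Gamma}(\cdots)\,ds$ (which holds with no $\va$-dependent constant precisely because ${\sf w}_1$ vanishes on $\Gamma_{1/2}^{\pm}$, the boundary integrand being bilinear in ${\sf w}_1$ and $\nabla{\sf w}_1$) and dropping $-\int(\operatorname{div}{\sf w}_1)^2\leq0$, the task reduces to estimating the interior term $\int_{\Omega_{1/2}}\mathbb{C}^0 e(\wdt{\sf u}_1):e({\sf w}_1)$ and the integrals over $\Gamma$.

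The boundary integrals over $\Gamma$ are controlled exactly as in Lemma \ref{lem2}: on $\Gamma$ one has $\delta(x_1)\geq c>0$, so these terms are bounded by $C\int_{\Gamma}(|\nabla{\sf w}_1|^2+|{\sf w}_1|^2)\,ds$, and since $\Omega_{2/3}\setminus\overline{\Omega_{1/3}}\subset\subset\Omega_1\setminus\overline{\Omega_{1/4}}$ sits at positive distance from the neck $x_1=0$, the interior $C^{1,\gamma}$ estimate (Theorem \ref{C1Gamma}) and the $W^{1,p}$ estimate (Theorem \ref{them2}) applied to \eqref{Lwl} on this ``fat'' region give $\int_{\Gamma}(|\nabla{\sf w}_1|^2+|{\sf w}_1|^2)\,ds\leq C(\|{\sf w}_1\|_{L^2(\Omega_1)}^2+\|\varphi^{(1)}\|_{C^{1,\gamma}(\Gamma_1^+)}^2)$; here one uses that $e(\wdt{\sf u}_1)$ is $C^{\gamma}$ on $\Omega_1\setminus\overline{\Omega_{1/4}}$ with $\va$-uniform seminorm, because $\delta(x_1)$ is bounded below away from the neck.

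The hard part is the interior source term, because $e(\wdt{\sf u}_1)$ carries the entry $\partial_2\wdt u_1^{(1)}=(\varphi^{(1)}-\psi^{(1)})/\delta(x_1)$, which is of order $\va^{-1}$ near $x_1=0$ and whose square is not $\va$-uniformly integrable. The remedy, as in the proof of Lemma \ref{lem2}, is that this entry is independent of $x_2$ (equivalently $\partial_{22}\wdt{\sf u}_1=0$): in the expansion of $\mathbb{C}^0 e(\wdt{\sf u}_1):e({\sf w}_1)$ it multiplies $e_{12}({\sf w}_1)=\tfrac12(\partial_1 w_1^{(2)}+\partial_2 w_1^{(1)})$, the $\partial_2 w_1^{(1)}$ piece integrates to zero in $x_2$ since $w_1^{(1)}=0$ on $\Gamma_{1/2}^{\pm}$, and in the $\partial_1 w_1^{(2)}$ piece I would integrate by parts in $x_1$, transferring the derivative onto $(\varphi^{(1)}-\psi^{(1)})/\delta(x_1)$; this produces one more boundary term on $\Gamma$ (handled as above) and an interior remainder of size $\lesssim\big(\delta(x_1)^{-1}+|x_1|^{\gamma}\delta(x_1)^{-2}\big)\,\|\varphi^{(1)}\|_{C^{1,\gamma}(\Gamma_1^+)}\,|{\sf w}_1|$, which is absorbed by combining the one-dimensional Poincar\'e inequality $\int|{\sf w}_1|^2\,dx_2\leq\delta(x_1)^2\int|\partial_2{\sf w}_1|^2\,dx_2$ on each vertical column with the $\va$-uniform bounds $\int_{|x_1|<1/2}\delta(x_1)^{-1}|x_1|^{2\gamma}\,dx_1\leq C$ and $\int_{|x_1|<1/2}\delta(x_1)\,dx_1\leq C$. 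All the remaining entries of $e(\wdt{\sf u}_1)$ involve only $\partial_1\wdt u_1^{(1)}$, for which \eqref{par_1} gives the pointwise bound and \eqref{par_n-1} gives $\int_{\Omega_1}|\partial_1\wdt{\sf u}_1|^2\leq C\|\varphi^{(1)}\|_{C^{1,\gamma}(\Gamma_1^+)}^2$; pairing all of these against $\nabla{\sf w}_1$ and using Young's inequality lets one absorb $\tfrac{c_0}{2}\int_{\Omega_{1/2}}|\nabla{\sf w}_1|^2$ into the left-hand side, which yields \eqref{L1.4}. (Alternatively one may invoke Lemma \ref{lem2} directly, since \eqref{Lwl} is the special case of Definition \ref{defn1} with $m=n=2$, $B=C=D=0$ and $A$ the elastic tensor; the argument above is simply its Lam\'e-adapted specialization.)
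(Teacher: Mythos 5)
Your argument follows the same route as the paper's: test \eqref{Lwl} against ${\sf w}_1$ over $\Omega_{1/2}$, integrate by parts, exploit that $\partial_2\wdt u_1^{(1)}$ is independent of $x_2$ (equivalently $\partial_{22}\wdt u_1^{(1)}=0$) so the dangerous $O(\va^{-1})$ source entries reduce, after one further integration by parts, to terms involving only $\partial_1\wdt u_1^{(1)}$, and then absorb everything using \eqref{par_1}, \eqref{par_n-1}, the vertical-boundary bound \eqref{1.35}, Young's inequality, and the one-dimensional Poincar\'e inequality in $x_2$. You are more explicit in two places where the paper is terse, and both points are worth noting. First, you justify the coercivity $\int_{\Omega_{1/2}}\mathbb{C}^0 e({\sf w}_1):e({\sf w}_1)\gtrsim\int_{\Omega_{1/2}}|\nabla{\sf w}_1|^2$ via the first Korn identity with its boundary term tracked explicitly; the paper's \eqref{L1.3} attributes this directly to the strong ellipticity condition \eqref{LHC}, which does not give the inequality pointwise (since $\mathbb{C}^0M:M$ vanishes on antisymmetric $M$), so a Korn-type step is indeed needed and your version supplies it. Second, you retain the vertical-boundary contribution $\int_{|x_1|=1/2}\big(\mathbb{C}^0(e({\sf w}_1)-e(\wdt{\sf u}_1))\vec{\nu}\big)\cdot{\sf w}_1\,ds$ already at the first integration by parts, whereas the paper's identity \eqref{L1.1} omits it (boundary integrals reappear only in the estimate \eqref{L1.2}); your bookkeeping is the more careful of the two. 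Apart from these presentational tightenings, the decomposition of $e(\wdt{\sf u}_1)$, the role of $\partial_{22}\wdt u_1^{(1)}=0$, and every estimate invoked coincide with the paper's proof, and your closing remark that the lemma also follows by specializing Lemma \ref{lem2} is valid.
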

\begin{proof}
Multiplying \eqref{Lwl} by ${\sf w}_{1}$ and making use of  the integration by parts
in $\Omega_{1/2}$, in view of ${\sf w}_{1}=0$ on $\Gamma_{1}^{\pm}$, we have
\begin{align}\label{L1.1}
&\int_{\Omega_{1/2}}
\Big(\mathbb{C}^0e({\sf w}_{1}), e({\sf w}_{1})\Big)dx
=\int_{\Omega_{1/2}}\big(\nabla\cdot(\mathbb{C}^0
e(\wdt{\sf u}_{1}))\big)\cdot {\sf w}_{1} \, dx.
\end{align}
For the right hand side of \eqref{L1.1},  noticing that $\ptl_{22} \wdt{u}_{1}^{(1)}=0$ in $\Omega_{1/2}$,
 and by using integration by parts, \eqref{par_1} and \eqref{1.35} , one has
\begin{align}
&\left|
\int_{\Omega_{1/2}}\big(\nabla\cdot(\mathbb{C}^0
e(\wdt{\sf u}_{1}))\big)\cdot {\sf w}_{1} \, dx\right|\nonumber\\
\leq&C\left|\int_{\Omega_{1/2}}
\ptl_1(\ptl_1 \wdt{u}^{(1)}_{1})w_1^{(1)}\,dx
+\int_{\Omega_{1/2}}
\ptl_2(\ptl_1 \wdt{u}_{1}^{(1)}) w_1^{(2)} \,dx\right|\nonumber\\
\leq&C\int_{\Omega_{1/2}}\left|\ptl_1\wdt{u}_{1}^{(1)}\right|\left|\nabla w_{1}\right|\,dx
+\int_{\substack{|x_1|=1/2\\ -\frac{\va}{2}+h_2(x_1)<x_2<\frac{\va}{2}+h_1(x_1)}}
\left|w_1\right|\left|\ptl_{1}\wdt{u}_{1}^{(1)}\right|dx_2\nonumber\\
\leq&
\frac{\lambda}{2}\int_{\Omega_{1/2}}|\nabla w_{1}|^2\,dx
+C\left(\left\|{\sf w}_{1}\right\|_{L^{2}(\Omega_{1})}^{2}
+
\|\varphi^{(1)}\|^2_{C^{1,\gamma}(\Gamma_{1}^{+})}
+
\|\psi^{(1)}\|^2_{C^{1,\gamma}(\Gamma_{1}^{-})}
\right).\label{L1.2}
\end{align}
For the left hand side of \eqref{L1.1}, it follows from
strong ellipticity condition as in \eqref{LHC}
there exists a positive constant $\lambda$, such that,
\begin{align}\label{L1.3}
\lambda\int_{\Omega_{1/2}}|\nabla {\sf w}_{1}|^2\,dx\leq
\int_{\Omega_{1/2}}
\Big(\mathbb{C}^0e({\sf w}_{1}), e({\sf w}_{1})\Big)dx.
\end{align}
By \eqref{L1.1}-\eqref{L1.3}, we obtain \eqref{L1.4}.
\end{proof}
From the definition of $\wdt{\sf u}_1$ , we can get
\begin{equation*}
\mathbb{C}^0e(\wdt{\sf u}_1)=\begin{pmatrix}
~(\lam_1+2\mu_1)\ptl_1\wdt{\sf u}_{1}^{(1)} &
\mu_1 \ptl_2\wdt{\sf u}_{1}^{(1)} ~\\
~~\\
~\mu_1 \ptl_2\wdt{\sf u}_{1}^{(1)} &
 \lam_1 \ptl_1\wdt{\sf u}_{1}^{(1)}~
\end{pmatrix}.
\end{equation*}
Let
\begin{align}\label{L1.9}
\mathcal{M}:=\int_{\widehat{\Omega}_{s}(z_1)}\!\!\!\!\!\!\!\!\!\!\!\!\!\!\!\!\!
 {}-{} \,\,\mathbb{C}^0e(\bar{u}_1^1(y)) \, dy:=\frac{1}{|\widehat{\Omega}_{s}(z_1)|}\int_{\widehat{\Omega}_{s}(z_1)} \mathbb{C}^0e(\bar{u}_1^1(y)) \ dy .
\end{align}
It follows that from \eqref{Lwl} that ${\sf w}_1$ satisfy
\begin{align}\label{Lwm1}
\mathcal{L}_{\lambda_1, \mu_{1}}{\sf w}_{1}
=-\mathcal{L}_{\lambda_1,\mu_1}(\wdt{\sf u}_{1}-\mathcal{M}).
\end{align}

\begin{lemma}
Under the hypotheses of Lemma \ref{lemmabddlocal} and in addition
that $w_{1}$ is the weak solution of \eqref{Lwl},
then there exists a positive constant  $C$ independent of $\va$, such that,
for $0\leq|z_1|\leq \va^{\frac{1}{1+\gamma}}$,
\begin{align}\label{L1.5}
\int_{\widehat{\Omega}_{\delta(z_1)}(z)}\left|\nabla {\sf w}_{1}\right|^{2}\ dx
\leq&
C\va^{\frac{2\gamma}{1+\gamma}}
\left(
\left|\varphi^{(1)}(z_{1}, \frac{\varepsilon}{2}+h_1(z_1))
-\psi^{(1)}(z_{1},-\frac{\va}{2}+h_2(z_{1}))\right|^2\right)\nonumber\\
    \quad&
    +C\va^{\frac{2\gamma}{1+\gamma}+2}
    \left(
    \left\|{\sf w}_{1}\right\|^2_{L^{2}(\Omega_1)}
    +\left\|\varphi^{(1)}\right\|_{C^{1,\gamma}(\Gamma_{1}^{+})}^2
+\left\|\psi^{(1)}\right\|_{C^{1,\gamma}(\Gamma_{1}^{-})}^2\right),
\end{align}
and for $\va^{\frac{1}{1+\gamma}}<|z_1|\leq 1/2$,
\begin{align}\label{L1.6}
\int_{\widehat{\Omega}_{\delta(z_1)}(z)}
\left|\nabla {\sf w}_{1}\right|^{2}\ dx
\leq&
C|z_1|^{2\gamma}\left(\left|\varphi^{(1)}(z_1, \frac{\varepsilon}{2}+h_1(z_1))
-\psi^{(1)}(z_1,-\frac{\va}{2}+h_2(z_1))\right|^2\right)\nonumber\\
\quad&
+C|z_1|^{2\gamma+2(1+\gamma)}
\left(
\left\|{\sf w}_{1}\right\|^2_{L^{2}(\Omega_1)}
+\left\|\varphi^{(1)}\right\|_{C^{1,\gamma}(\Gamma_{1}^{+})}^2
+\left\|\psi^{(1)}\right\|_{C^{1,\gamma}(\Gamma_{1}^{-})}^2
\right).
\end{align}
\end{lemma}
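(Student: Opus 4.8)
\emph{Proof sketch.} The plan is to reproduce, for the constant-coefficient Lam\'e operator in dimension $n=2$, the iteration argument used in the proof of Lemma \ref{lemmabddlocal}, now starting from the divergence-form representation \eqref{Lwm1}. The point is that ${\sf w}_1$ solves a \emph{homogeneous} Lam\'e system with a single divergence-form right-hand side $-\mathcal{L}_{\lambda_1,\mu_1}(\wdt{\sf u}_1-\mathcal{M})=-\nabla\cdot(\mathbb{C}^0 e(\wdt{\sf u}_1-\mathcal{M}))$; there are no zeroth- or first-order terms and no non-divergence source, so the only error quantity to be tracked is $\mathbb{C}^0 e(\wdt{\sf u}_1)-\mathcal{M}$. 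Observe also that for $n=2$ the exponents in \eqref{L1.5}--\eqref{L1.6} coincide with those of Lemma \ref{lemmabddlocal}, since $\va^{\,n-2/(1+\gamma)}=\va^{\,2\gamma/(1+\gamma)}$ and $|z_1|^{(1+\gamma)(n-2/(1+\gamma))}=|z_1|^{2\gamma}$.

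First I would fix $z=(z_1,z_2)\in\Omega_{1/2}$, abbreviate $\delta=\delta(z_1)$, and for $0<t<s$ take the radial cut-off $\eta=\eta(x_1)$ equal to $1$ on $\{|x_1-z_1|<t\}$, supported in $\{|x_1-z_1|<s\}$, with $|\nabla\eta|\le 2/(s-t)$, exactly as in Lemma \ref{lemmabddlocal}. Testing \eqref{Lwm1} against $\eta^2{\sf w}_1$, integrating by parts (the boundary term on $\Gamma_1^\pm$ vanishes since ${\sf w}_1=0$ there), applying the strong ellipticity \eqref{LHC} to the symmetric gradient of $\eta{\sf w}_1$, and using Young's inequality to absorb the $\eta^2|\nabla{\sf w}_1|^2$ contributions coming from the $\nabla\eta$-commutator and from the source, I obtain the Caccioppoli-type bound
\[
\int_{\widehat{\Omega}_{t}(z)}|\nabla{\sf w}_1|^2\,dx
\le \frac{C}{(s-t)^2}\int_{\widehat{\Omega}_{s}(z)}|{\sf w}_1|^2\,dx
+ C\int_{\widehat{\Omega}_{s}(z)}|\mathbb{C}^0 e(\wdt{\sf u}_1)-\mathcal{M}|^2\,dx ,
\]
the analog of \eqref{1.4} with $H\equiv 0$.

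Next I would estimate the two right-hand terms. Since ${\sf w}_1$ vanishes on the top and bottom boundaries of the width-$\delta$ strip, the same one-dimensional Poincar\'e computation as in \eqref{1.5} gives $\int_{\widehat{\Omega}_{s}(z)}|{\sf w}_1|^2\le C\delta^2\int_{\widehat{\Omega}_{s}(z)}|\nabla{\sf w}_1|^2$, with $\delta\sim\va$ in Case $1$ ($|z_1|\le\va^{1/(1+\gamma)}$) and $\delta\sim|z_1|^{1+\gamma}$ in Case $2$. For the oscillation term I would use the explicit matrix $\mathbb{C}^0 e(\wdt{\sf u}_1)$ displayed just before \eqref{L1.9}, the definition \eqref{L1.9} of $\mathcal{M}$ as its average over $\widehat{\Omega}_s(z)$, and the elementary bound $|\mathbb{C}^0 e(\wdt{\sf u}_1)(x)-\mathcal{M}|\le C[\nabla\wdt{\sf u}_1]_{\gamma,\widehat{\Omega}_s(z)}(\diam \widehat{\Omega}_s(z))^{\gamma}$, into which I insert the H\"older semi-norm estimate for $\nabla\wdt{\sf u}_1$ from Proposition \ref{prop1} (applicable since $s\le C\delta$). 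This reproduces functions $G_{12}(s)$ (Case $1$) and $G_{22}(s)$ (Case $2$) of the same shape as in the proof of Lemma \ref{lemmabddlocal}, whose values on the radii $t_\tau=\delta+2c\tau\va$ (resp.\ $\delta+2c\tau|z_1|^{1+\gamma}$), $\tau=0,\dots,k$, are bounded by $C\va^{2\gamma/(1+\gamma)}\big(|\varphi^{(1)}(z_1,\tfrac\va2+h_1(z_1))-\psi^{(1)}(z_1,-\tfrac\va2+h_2(z_1))|^2+\va^2(\|\varphi^{(1)}\|_{C^{1,\gamma}(\Gamma_1^+)}^2+\|\psi^{(1)}\|_{C^{1,\gamma}(\Gamma_1^-)}^2)\big)(\tau+1)^{N}$ (resp.\ its $|z_1|$-analog) for a fixed integer $N$.

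Finally I would carry out the hole-filling iteration. Writing $F(t):=\int_{\widehat{\Omega}_t(z)}|\nabla{\sf w}_1|^2$ and combining the Caccioppoli bound with the Poincar\'e estimate gives $F(t)\le(c\delta/(s-t))^2F(s)+CG_{12}(s)$; choosing the increment $s-t$ proportional to $\va$ in Case $1$ (resp.\ to $|z_1|^{1+\gamma}$ in Case $2$) makes the prefactor at most $1/4$, so iterating $k\sim\va^{-\gamma/(1+\gamma)}$ (resp.\ $|z_1|^{-\gamma}$) times and summing the resulting geometric series --- with $(1/4)^kF(t_k)$ absorbed into $\|{\sf w}_1\|_{L^2(\Omega_1)}^2$ via Lemma \ref{lem2} --- yields \eqref{L1.5} and \eqref{L1.6}. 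I expect the only step requiring genuine care, rather than a literal transcription of Lemma \ref{lemmabddlocal}, is the ellipticity/absorption step for the vector-valued system: one must verify that $\eta{\sf w}_1$ lies in $W^{1,2}_0$ of the (scaled) narrow strip so that \eqref{LHC}, equivalently Korn's second inequality, applies to it, and that after the choice $s-t\sim\delta$ the $\nabla\eta$-commutator terms are genuinely of lower order --- which is precisely where the width-$\delta$ geometry of $\widehat{\Omega}_s(z)$ is used.
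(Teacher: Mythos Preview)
Your proposal is correct and follows essentially the same route as the paper: test \eqref{Lwm1} against $\eta^2{\sf w}_1$ to obtain the Caccioppoli inequality \eqref{L.11}, feed in the one-dimensional Poincar\'e bound and the oscillation estimate for $\mathbb{C}^0 e(\wdt{\sf u}_1)-\mathcal{M}$ coming from Proposition~\ref{prop1}, and then iterate exactly as in Lemma~\ref{lemmabddlocal}. The paper offloads the Caccioppoli step to \cite[Proposition~2.1]{CL} rather than writing it out, but the substance --- including the ellipticity/Korn issue you flag for $\eta{\sf w}_1\in W^{1,2}_0$ of the strip --- is the same.
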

\begin{proof}
For simplicity, we assume that $\psi\equiv0$.
Similar to the proof of Lemma \ref{lemmabddlocal},
for a fixed point $z=(z_1,z_2)$,  consider the following cut-off function $\eta(x_1)$:
 for $0<t<s<1/2$, let $\eta$ be a cut-off function satisfying
\begin{align*}
\eta(x_1)=\begin{cases}
1 & \text{if} ~ |x_1-z_1|<t, \\
0 & \text{if} ~ |x_1-z_1|>s,\\
\end{cases}
\quad\text{and}\quad |\eta'(x_1)|\leq\frac{2}{s-t}.
\end{align*}
Multiplying  \eqref{Lwm1} by $\eta^{2}{\sf w}_{1}$ and using the integration by parts, one has
\begin{align}\label{L1.8}
\int_{\widehat{\Omega}_{s}(z)}\Big(\mathbb{C}^0e({\sf w}_1), e(\eta^2{\sf w}_1)\Big) \ dx
=- \int_{\widehat{\Omega}_{s}(z)} \Big(\mathbb{C}^0e(\wdt{\sf u}_1)-\mathcal{M},
\nabla (\eta^2{\sf w}_1)\Big) \ dx.
\end{align}
By treating \eqref{L1.8} in the same way as proposition 2.1 in \cite{CL}, we get
\begin{equation}\label{L.11}
\int_{\widehat{\Omega}_{t}(z)}
|\nabla {\sf w}_1|^{2}\ dx
\leq\,
\frac{C}{(s-t)^{2}}\int_{\widehat{\Omega}_{s}(z)}|{\sf w}_1|^{2}\ dx
+C\int_{\widehat{\Omega}_{s}(z)} |\mathbb{C}^0e(\wdt{\sf u}_{1})-\mathcal{M}|^2 \ dx.
\end{equation}
\textbf{Case 1.} For $|z_1|\leq \varepsilon^{\frac{1}{1+\gamma}}$
and $0< s<\varepsilon^{\frac{1}{1+\gamma}}$, we have
$\varepsilon\leq \delta(z_1)\leq C\varepsilon$.
By a direct calculation, we have
\begin{align}\label{L1.10}
\int_{\widehat{\Omega}_{s}(z)}|{\sf w}_{1}|^{2}\,dx
=\int_{\widehat{\Omega}_{s}(z)}\left|
\int_{-\frac{\varepsilon}{2}+h_2(x_1)}^{x_{2}}\partial_{2}{\sf w}_{1}(x_1,x_{2})\,dx_{2}
\right|^2\,dx
\leq C\varepsilon^2\int_{\widehat{\Omega}_{s}(z)}|\nabla {\sf w}_{1}|^2\,dx.
\end{align}
 In the view of Proposition \ref{prop1} and \eqref{L1.9}, one has
 \begin{align}
\int_{\widehat{\Omega}_{s}(z)} |\mathbb{C}^0e(\wdt{\sf u}_1)-\mathcal{M}|^2 \ dx \notag
\leq& C [\nabla \wdt{\sf u}_1^{(1)}]_{\gamma, \, \widehat{\Omega}_{s}(z_1)}^2  \int_{\widehat{\Omega}_{s}(z_1)}  (s^{2\gamma}+\delta(z_1)^{2\gamma}) \ dx \notag \nonumber\\
\leq&C\left(\left|\varphi^{(1)}(z_1, \frac{\varepsilon}{2}+h_1(z_1))\right|^2
+s^2\|\varphi^{(1)}\|_{C^{1,\gamma}(\Gamma_{1}^{+})}^2\right)\nonumber\\
&\quad \left(
\frac{s^{3}}{\varepsilon^{1+\frac{2}{1+\gamma}}}
+\frac{s}{\varepsilon^{\frac{2}{1+\gamma}-1}}
+\frac{s^{3-2\gamma}}{\varepsilon^{1+\frac{2}{1+\gamma}-2\gamma}}
+\frac{s^{1+2\gamma}}{\varepsilon^{1+\frac{2\gamma^{2}}{1+\gamma}}}
\right)=:G_{1}(s).\label{L1.12}
\end{align}

{\bf Case 2.} For $\varepsilon^{\frac{1}{1+\gamma}}\leq|z_1|\leq\,\frac{1}{2}$
 and $0<s<|z_1|$, we have $\frac{1}{C}|z_1|^{1+\gamma}\leq\delta(z_1)\leq\,C|z_1|^{1+\gamma}$.
 Estimates \eqref{L1.10} and \eqref{L1.12} become, respectively,
\begin{align}\label{1.14}
\int_{\widehat{\Omega}_{s}(z)}|{\sf w}_{1}|^{2}\ dx
\leq&\,C|z_1|^{2(1+\gamma)}\int_{\widehat{\Omega}_{s}(z_1)}|\nabla{{\sf w}_{1}}|^{2}\ dx, \quad\mbox{if}~\,0<s<\frac{2}{3}|z'|,
\end{align}
and
\begin{align}
\int_{\widehat{\Omega}_{s}(z)} |\mathbb{C}^0e(\wdt{\sf u}_1)-\mathcal{M}|^2 \ dx \notag
\leq&
C\left(\big|\varphi^{(\ell)}(z', \frac{\varepsilon}{2}+h_1(z'))\big|^2
 +s^2\left\|\varphi^{(\ell)}\right\|_{C^{1,\gamma}(\Gamma_{1}^{+})}^2\right)\nonumber\\
 \quad&\left(
\frac{s^{3}}{|z_1|^{3+\gamma}}
+\frac{s}{|z_1|^{-1-\gamma}}
+\frac{s^{3-2\gamma}}{|z_1|^{1-\gamma-2\gamma^2}}
+\frac{s^{2+2\gamma}}{|z_1|^{\gamma-1+2\gamma^2}}
\right).
\end{align}
Next, similar to the proof of Lemma \eqref{lemmabddlocal}, we complete the proof of \eqref{L1.5}
and \eqref{L1.6}.
\end{proof}

\begin{lemma}
Under the hypotheses of Lemma \ref{lem1} and in addition that $w_{1}$ is the weak solution of \eqref{Lwl},
then there exists a positive constant  $C$ independent of $\va$ such that,
for $ |z_1|\leq \va^{\frac{1}{1+\gamma}}$,
\begin{align}\label{L1.14}
\left|\nabla {\sf w}_{1}(z_1,z_2)\right|
\leq&C\va^{-\frac{1}{1+\gamma}}\left|\varphi^{(1)}(z', \frac{\varepsilon}{2}+h_1(z_1))
-\psi^{(1)}(z_1, -\frac{\varepsilon}{2}+h_2(z_1))\right|
\nonumber\\
&+C\va^{-\frac{\gamma}{1+\gamma}}
\left(
\|{\sf w}_{1}\|_{L^2(\Omega_1)}
+
\|\varphi^{(1)}\|_{C^{1,\gamma}(\Gamma_1^{+})}
+\|\psi^{(1)}\|_{C^{1,\gamma}(\Gamma_1^{-})}
\right),
\end{align}
and for $\va^{\frac{1}{1+\gamma}}<|z_1|<\frac{1}{2}$,
\begin{align}\label{1.15}
\left|\nabla {\sf w}_{1}(z_1,z_2)\right|
\leq& C|z_1|^{-1}\left|\varphi^{(1)}(z_1, \frac{\varepsilon}{2}+h_1(z'))
-\psi^{(1)}(z_1, -\frac{\varepsilon}{2}+h_2(z_1))\right|
  \nonumber\\
&+C|z_1|^{-\gamma}
\left(
\|{\sf w}_{1}\|_{L^2(\Omega_1)}
+\|\varphi^{(1)}\|_{C^{1,\gamma}(\Gamma_1^{+})}
+\|\psi^{(1)}\|_{C^{1,\gamma}(\Gamma_1^{-})}
\right).
\end{align}
\end{lemma}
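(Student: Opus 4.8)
The plan is to run the Lam\'{e} version of the rescaling argument used in the proof of Lemma \ref{lem1}, now using the energy bounds \eqref{L1.5}--\eqref{L1.6} of the preceding lemma in place of \eqref{1.46}--\eqref{1.47} and Proposition \ref{prop1} for the H\"{o}lder semi-norm of $\nabla\wdt{\sf u}_1$. By linearity we may assume $\psi\equiv0$. First I would fix $z=(z_1,z_2)\in\Omega_{1/2}$, write $\delta:=\delta(z_1)$, and make the change of variables $x_1-z_1=\delta y_1$, $x_2=\delta y_2$, which sends $\widehat{\Omega}_{\delta}(z)$ onto a domain $\mathcal{Q}_1$ of nearly unit size with boundary portions $\widehat{\Gamma}_1^{\pm}$, and set $\widehat{\sf w}_1(y):={\sf w}_1(\delta y_1+z_1,\delta y_2)$ and $\widehat{\sf u}_1(y):=\wdt{\sf u}_1(\delta y_1+z_1,\delta y_2)$. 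Since the Lam\'{e} operator has constant coefficients and is in pure divergence form, \eqref{Lwm1} shows that, on $\mathcal{Q}_1$ and with zero data on $\widehat{\Gamma}_1^{\pm}$, $\widehat{\sf w}_1$ solves a system of the type in Definition \ref{defn2} with no zeroth-order datum and right-hand side $-\partial_\alpha\widehat{\sf F}_\alpha$, where $\widehat{\sf F}=\mathbb{C}^0e(\widehat{\sf u}_1)-\widehat{\mathcal{M}}$ and $\widehat{\mathcal{M}}$ is the average of $\mathbb{C}^0e(\widehat{\sf u}_1)$ over $\mathcal{Q}_1$; hence $[\widehat{\sf F}]_{\gamma,\mathcal{Q}_1}\leq C\delta^{1+\gamma}[\nabla\wdt{\sf u}_1]_{\gamma,\widehat{\Omega}_{\delta}(z)}$.

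Next I would apply Theorem \ref{them2} on $\mathcal{Q}_{1/2}$ to bound $\|\widehat{\sf w}_1\|_{L^\infty(\mathcal{Q}_{1/2})}$, then Theorem \ref{C1Gamma} on $\mathcal{Q}_{1/4}$, together with the Poincar\'{e} inequality (using $\widehat{\sf w}_1=0$ on $\widehat{\Gamma}_1^{\pm}$), exactly as in the proof of Lemma \ref{lem1}, to obtain
\[
\|\nabla\widehat{\sf w}_1\|_{L^\infty(\mathcal{Q}_{1/4})}\leq C\big(\|\nabla\widehat{\sf w}_1\|_{L^2(\mathcal{Q}_1)}+[\widehat{\sf F}]_{\gamma,\mathcal{Q}_1}\big).
\]
Undoing the scaling via $\|\nabla\widehat{\sf w}_1\|_{L^\infty(\mathcal{Q}_{1/4})}=\delta\|\nabla{\sf w}_1\|_{L^\infty(\widehat{\Omega}_{\delta/4}(z))}$ and $\|\nabla\widehat{\sf w}_1\|_{L^2(\mathcal{Q}_1)}=\delta^{1-n/2}\|\nabla{\sf w}_1\|_{L^2(\widehat{\Omega}_{\delta}(z))}$ (with $n=2$) then gives
\[
\|\nabla{\sf w}_1\|_{L^\infty(\widehat{\Omega}_{\delta/4}(z))}\leq C\delta^{-n/2}\|\nabla{\sf w}_1\|_{L^2(\widehat{\Omega}_{\delta}(z))}+C\delta^{\gamma}[\nabla\wdt{\sf u}_1]_{\gamma,\widehat{\Omega}_{\delta}(z)}.
\]

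Finally, I would substitute \eqref{L1.5} for $|z_1|\leq\va^{1/(1+\gamma)}$ (where $\delta\simeq\va$) and \eqref{L1.6} for $\va^{1/(1+\gamma)}<|z_1|<1/2$ (where $\delta\simeq|z_1|^{1+\gamma}$) into the first term on the right, and Proposition \ref{prop1} with $s=\delta(z_1)$ into the second term. Collecting powers of $\delta$, both terms turn out to be bounded by $C\delta^{-1/(1+\gamma)}\big|\varphi^{(1)}(z_1,\frac{\varepsilon}{2}+h_1(z_1))-\psi^{(1)}(z_1,-\frac{\varepsilon}{2}+h_2(z_1))\big|$ plus a bounded multiple of $\|{\sf w}_1\|_{L^2(\Omega_1)}+\|\varphi^{(1)}\|_{C^{1,\gamma}(\Gamma_1^+)}+\|\psi^{(1)}\|_{C^{1,\gamma}(\Gamma_1^-)}$; since $z\in\widehat{\Omega}_{\delta(z_1)/4}(z)$, the left side controls $|\nabla{\sf w}_1(z_1,z_2)|$, and substituting $\delta\simeq\va$ (and $1\leq\va^{-\gamma/(1+\gamma)}$) gives \eqref{L1.14}, while substituting $\delta\simeq|z_1|^{1+\gamma}$ (and $1\leq|z_1|^{-\gamma}$) gives \eqref{1.15}. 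The main obstacle will be this exponent bookkeeping: one must verify that the worst contribution produced by Proposition \ref{prop1} at $s=\delta$, namely $\delta^{\gamma}\cdot\delta^{-1-1/(1+\gamma)}\delta^{1-\gamma}=\delta^{-1/(1+\gamma)}$ paired with $|\varphi^{(1)}-\psi^{(1)}|$, is exactly matched (not exceeded) by the powers coming from \eqref{L1.5}--\eqref{L1.6}, and that every remaining term is genuinely of lower order in $\delta$; this comes down to the elementary facts $1-\frac{1}{1+\gamma}\geq0$, $\frac{\gamma}{1+\gamma}-1=-\frac{1}{1+\gamma}$, and $\delta<1$.
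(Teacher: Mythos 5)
Your proposal is correct and follows essentially the same route as the paper: rescale $\widehat{\Omega}_{\delta(z_1)}(z)$ to a unit-size domain $\mathcal{Q}_1$, apply Theorem \ref{them2} and Theorem \ref{C1Gamma} (with Poincar\'e) to the rescaled constant-coefficient Lam\'e system \eqref{L1.13} to obtain precisely the inequality \eqref{L1.15} (with $\delta^{-n/2}=\delta^{-1}$ since $n=2$), then insert the energy bounds \eqref{L1.5}--\eqref{L1.6} and Proposition \ref{prop1} at $s\simeq\delta(z_1)$ and track the powers of $\delta$. Your identification of the dominant contribution $\delta^{\gamma}\cdot\delta^{-\gamma-1/(1+\gamma)}=\delta^{-1/(1+\gamma)}$ paired with $|\varphi^{(1)}-\psi^{(1)}|$, and the observation that this matches the energy term in both regimes, is exactly the bookkeeping the paper's (terse) proof leaves implicit.
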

\begin{proof}
Let $\mathcal{Q}_{1}$ be as in \eqref{LQ} and $C_{ijkl}$ be as in \eqref{cijkl}.
For any $(y_1, y_2)\in \mathcal{Q}_1$.
we denote
\begin{align*}
 \widetilde{\sf w}_1(y_1, y_2):= {\sf w}_1(\dt(z_1) y_1 + z_1 , \dt(z_1) y_2),
\quad \widehat{\sf u}_1(y_1 , y_2):=\wdt{\sf u}_1(\dt(z_1) y_1 +z_1, \dt(z_1) y_2),
\end{align*}
then, after the same coordinate transformation as in Lemma \ref{lem1},
we can obtain that $\widetilde{\sf w}_{1}$ satisfies
\begin{equation}\label{L1.13}
\left\{ \begin{aligned}
-\sum_{j,k,l}\ptl_j\big(C_{ijkl}\ptl_l \widetilde{w}_{1}^{(k)} \big) &=
\sum_{j,k,l}\ptl_j\big(C_{ijkl}\ptl_l\widehat{u}_1^{(k)}\big)\quad &\text{in}~~\mathcal{Q}_1, \\
\widetilde{\sf w}_{1}&=0 \quad &\text{on} ~~\widehat{\Gamma}_1^{\pm}.
\end{aligned}\right.
\end{equation}
Similar to the proof in Lemma \eqref{lem1},
recalling back to the original region $\widehat{\Omega}_{\delta(z_1)}(z)$,
one has
\begin{align}\label{L1.15}
\|\nabla {\sf w}_1\|_{L^\infty( \widehat{\Omega}_{\delta(z_1)/4}(z))}
\leq
C\delta(z_1)^{-1}\|\nabla {\sf w}_{1}\|_{L^2( \widehat{\Omega}_{\delta(z_1)}(z_1))}
+C\delta(z_1)^{\gamma}[\nabla \wdt{\sf u}_1]_{\gamma, \, \widehat{\Omega}_{\delta(z_1)}(z) }).
\end{align}
Therefore, by using  \eqref{L1.5} and \eqref{L1.6}, Proposition \eqref{prop1}, we proved
the \eqref{L1.14} and \eqref{L1.15}.
\end{proof}

\begin{proof}[Proof of Corollary \ref{cor1}]
Consequently, by \eqref{par_1} and \eqref{par_n}, we have for sufficiently small $\va$ and $z\in \Omega_{1/2}$,
\begin{align}\label{L1.48}
|\nabla {\sf v}_{1}(z)|\leq&
\frac{C\left|\varphi^{(1)}(z_1, \frac{\varepsilon}{2}+h_1(z_1))-\psi^{(1)}(z_1, -\frac{\varepsilon}{2}+h_2(z_1))\right|}
{\va+|z_1|^{1+\gamma}}
\nonumber  \\
&\quad+C\left(
\|{\sf v}_{1}\|_{L^2(\Omega_1)}
+\|\varphi^{(1)}\|_{C^{1,\gamma}(\Gamma_1^{+})}
+\|\psi^{(1)}\|_{C^{1,\gamma}(\Gamma_1^{-})}\right).
\end{align}
By \eqref{LD},
we have for $x\in \Omega_{1/2}$ as in \eqref{Omega},
\begin{align*}
|\nabla {\sf u}(x)|\leq& |\nabla{\sf v}_{1}(x)|+|\nabla{\sf v}_{2}(x)|\\
\leq&\frac{C\left|\varphi(x_1,\varepsilon/2+h_{1}(x_1))
-\psi(x_1,-\varepsilon/2+h_2(x_1))\right|}{\va+|x_1|^{1+\gamma}}\\
&+C\left(\|\varphi\|_{C^{1,\gamma}(\Gamma_{1}^+)}+\|\psi\|_{C^{1,\gamma}(\Gamma_1^-)} +\|{\sf u}\|_{L^{2}(\Omega_1)}\right).
\end{align*}
If $\varphi^{(\ell)}(0,\va/2)\ne \psi^{(\ell)}(0,-\va/2)$
 for some integer $\ell$, then by Lemma \ref{lem1}, we can obtain
\begin{align*}
|\nabla{\sf u}(0,x_2)|\geq\frac{\left|\varphi^{(\ell)}(0,\va/2)
-\psi^{(\ell)}(0,-\va/2) \right|}{C\va}\quad
\forall\,x_{2}\in (-\frac{\va}{2},\frac{\va}{2}).
\end{align*}
The proof of Corollary \ref{cor1} is completed.
\end{proof}

\section{Appendix:
Proof of $C^{1,\gamma}$ estimates and $W^{1,p}$ estimates }\label{sec5}

In this section, we show the proofs of the Theorem \ref{C1Gamma} and  Theorem \ref{them2},
which play a key role in the proof of Theorem \ref{Them1},
with the help of the Campanato's approach, Schauder estimates
and $L^{p}$ estimates for elliptic systems in \cite{Gia}.

\subsection{Proof of Theorem \ref{C1Gamma}}
To prove Theorem \ref{C1Gamma}, we first introduce the definition of the spaces of  Morrey and Campanato (see \cite[Chapter 5]{Gia}).

Let  $Q\subset \mathbb{R}^n$ be any domain and $\rho>0$, for any $x_{0}\in Q$
we use the \emph{symbol}
$Q(x_{0}; \rho)$ to denote the set $Q\cap B_{\rho}(x_{0})$
and  the \emph{symbol} $\dim Q$ to
denote the diameter of $Q$.
The domain $\Omega$ is said to be
a \emph{Lipschitz domain}
if $\ptl\Omega$ is Lipschitz defined as in Definition \ref{defnbdd}.
\begin{definition}
Let $Q$ be a Lipschitz domain in $\mathbb{R}^{n}$.
For every $1\leq p\leq +\infty,$ $\lambda>0$ define the Morrey space $L^{p,\lambda}(Q)$,
\begin{align*}
L^{p,\lambda}(Q):=\Big\{u\in L^{p}(Q):\sup\limits_{x_{0}\in Q,~\rho>0}\rho^{-\lambda}\int_{Q(x_{0},\rho)}|u|^{p}\,dx
<+\infty\Big\},
\end{align*}
endowed with the norm defined by
\begin{align*}
\|u\|_{L^{p,\lambda}(Q)}:
=\Big(\sup_{x_{0}\in Q,~\rho>0}\rho^{-\lambda}\int_{Q(x_{0},\rho)}
|u|^{p}\,dx\Big)^{\frac{1}{p}}.
\end{align*}
\end{definition}

\begin{definition}
Let $Q$ be a Lipschitz domain in $\mathbb{R}^{n}$.
For every $1\leq p\leq +\infty,$ $\lambda>0$ define
the Campanato space $\mathcal{L}^{p,\lambda}(Q)$,
\begin{align*}
\mathcal{L}^{p,\lambda}(Q):=\{u\in L^{p}(Q):\sup_{x_{0}\in Q, \rho>0}\rho^{-\lambda}
\int_{Q(x_{0},\rho)}|u-u_{x_{0},\rho}|^{p}\,dx
<+\infty\},
\end{align*}
endowed with the norm defined by
\begin{align}
\|u\|_{\mathcal{L}^{p,\lambda}(Q)}:&=[u]_{p,\lambda}+\|u\|_{L^{p}}\nonumber\\
:=&
\Big(\sup_{x_{0}\in Q,\rho>0}\rho^{-\lambda}
\int_{Q(x_{0},\rho)}|u-u_{x_{0},\rho}|^{p}\,dx\Big)^{\frac{1}{p}}
+\Big(\int_{Q}|u|^{p} \,dx\Big)^{\frac{1}{p}} < +\infty,\label{Csemi}
\end{align}
where $u_{x_{0},\rho}:=\frac{1}{|Q(x_{0},\rho)|}\int_{Q(x_{0},\rho)}u\,dx$.
\end{definition}

The follows lemma is just \cite[Theorem 5.5]{Gia}.
\begin{lemma} \label{prop2}
For $n<\lambda \leq n+p$ and $\gamma=\frac{\lambda-n}{p}$
we have $\mathcal{L}^{p, \lambda}(Q)=C^{0, \gamma}(\overline{Q}) .$ Moreover the H\"{o}lder semi-norm
$[u]_{0, \gamma}$ as in \eqref{semi}
is equivalent to $[u]_{p, \lambda}$ as in \eqref{Csemi}.
 If $\lambda>n+p$ and $u \in \mathcal{L}^{p, \lambda}(\Omega)$, then $u$ is constant.
\end{lemma}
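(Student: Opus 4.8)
The plan is to prove the two set inclusions $\mathcal{L}^{p,\lambda}(Q)\subseteq C^{0,\gamma}(\overline{Q})$ and $C^{0,\gamma}(\overline{Q})\subseteq\mathcal{L}^{p,\lambda}(Q)$ together with the matching two–sided semi-norm bounds, and then to treat $\lambda>n+p$ separately. The single geometric ingredient behind everything is that a Lipschitz domain $Q$ enjoys a measure–density (interior cone) property: there is a constant $A=A(Q)>0$ with
\[
A\rho^{n}\leq |Q(x_{0},\rho)|\leq\omega_{n}\rho^{n}\qquad\text{for all }x_{0}\in\overline{Q},\ 0<\rho\leq\diam Q,
\]
and moreover for $\rho\geq\diam Q$ one has $Q(x_{0},\rho)=Q$, so that $\rho^{-\lambda}\int_{Q(x_{0},\rho)}|u-u_{x_{0},\rho}|^{p}\,dx$ is non-increasing in $\rho$ there; hence only radii $\rho\leq\diam Q$ are relevant in $[u]_{p,\lambda}$. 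I would record this observation first.

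For $\mathcal{L}^{p,\lambda}\hookrightarrow C^{0,\gamma}$, fix $x_{0}\in\overline{Q}$ and compare the averages $u_{x_{0},\rho}$ along dyadic radii. Using the lower measure bound,
\[
|u_{x_{0},\rho/2}-u_{x_{0},\rho}|^{p}\leq\frac{1}{|Q(x_{0},\rho/2)|}\int_{Q(x_{0},\rho/2)}|u-u_{x_{0},\rho}|^{p}\,dx\leq\frac{C}{\rho^{n}}[u]_{p,\lambda}^{p}\rho^{\lambda}=C[u]_{p,\lambda}^{p}\rho^{p\gamma},
\]
so $|u_{x_{0},\rho/2}-u_{x_{0},\rho}|\leq C[u]_{p,\lambda}\rho^{\gamma}$. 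Summing the geometric series over $\rho_{k}=2^{-k}\rho$ shows $\{u_{x_{0},\rho}\}$ is Cauchy as $\rho\to0^{+}$; call the limit $\tilde u(x_{0})$. By the Lebesgue differentiation theorem $\tilde u=u$ a.e., so $\tilde u$ is the desired representative, and telescoping gives $|\tilde u(x_{0})-u_{x_{0},\rho}|\leq C[u]_{p,\lambda}\rho^{\gamma}$. For the Hölder bound, given $x,y\in\overline{Q}$ put $R=|x-y|$; since $Q(x,R)\subseteq Q(x,2R)\cap Q(y,2R)$, the same averaging trick bounds $|u_{x,R}-u_{x,2R}|$ and $|u_{x,R}-u_{y,2R}|$ each by $C[u]_{p,\lambda}R^{\gamma}$, hence $|u_{x,2R}-u_{y,2R}|\leq C[u]_{p,\lambda}R^{\gamma}$; combining with the two telescoping bounds yields $|\tilde u(x)-\tilde u(y)|\leq C[u]_{p,\lambda}|x-y|^{\gamma}$, i.e. $[\tilde u]_{0,\gamma}\leq C[u]_{p,\lambda}$, and the full norm inequality follows together with $\|u\|_{L^{p}}\leq|Q|^{1/p}\|u\|_{C^{0}}$.

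The reverse inclusion is elementary: for $u\in C^{0,\gamma}(\overline{Q})$,
\[
\int_{Q(x_{0},\rho)}|u-u_{x_{0},\rho}|^{p}\,dx\leq\int_{Q(x_{0},\rho)}\Big(\frac{1}{|Q(x_{0},\rho)|}\int_{Q(x_{0},\rho)}|u(x)-u(y)|\,dy\Big)^{p}dx\leq[u]_{0,\gamma}^{p}(2\rho)^{p\gamma}|Q(x_{0},\rho)|\leq C[u]_{0,\gamma}^{p}\rho^{\lambda},
\]
so $[u]_{p,\lambda}\leq C[u]_{0,\gamma}$, which with the first inclusion gives $\mathcal{L}^{p,\lambda}(Q)=C^{0,\gamma}(\overline{Q})$ with equivalent semi-norms. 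Finally, when $\lambda>n+p$ (so $\gamma>1$), the dyadic comparison and telescoping above never used $\gamma\leq1$, so they still produce $|\tilde u(x)-u_{x,\rho}|\leq C\rho^{\gamma}$ and $|u_{x,2R}-u_{y,2R}|\leq CR^{\gamma}$, hence $|\tilde u(x)-\tilde u(y)|\leq C[u]_{p,\lambda}|x-y|^{\gamma}$; since $Q$ is a connected Lipschitz domain any two points of $\overline{Q}$ are joined by a rectifiable path in $\overline{Q}$ of length $\leq C|x-y|$, and subdividing it into $N$ equal arcs and applying the $\gamma$-bound on each arc gives $|\tilde u(x)-\tilde u(y)|\leq C N^{1-\gamma}|x-y|^{\gamma}\to0$ as $N\to\infty$, so $u$ is constant. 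I expect the main obstacle—or rather, the step that must be argued with care rather than waved through—to be the uniform measure–density estimate $|Q(x_{0},\rho)|\geq A\rho^{n}$ (and, in the case $\lambda>n+p$, the companion fact that points of $\overline{Q}$ are joined by paths of comparably bounded length), both of which are exactly where the Lipschitz regularity of $\partial Q$ is used; the rest is the standard Campanato telescoping argument.
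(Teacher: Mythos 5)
The paper does not prove this lemma at all; it introduces it with the single sentence ``The follows lemma is just \cite[Theorem 5.5]{Gia}'' and cites Giaquinta--Martinazzi. Your proof supplies the standard Campanato argument, which is precisely the route taken in that reference: measure--density of a Lipschitz domain, dyadic telescoping of the averages $u_{x_0,\rho}$ to produce a H\"older-continuous representative and the bound $[u]_{0,\gamma}\leq C[u]_{p,\lambda}$, the elementary converse inequality $[u]_{p,\lambda}\leq C[u]_{0,\gamma}$, and, for $\lambda>n+p$, subdivision of a quasiconvex path to force $u$ to be constant. The details you give are correct: the Jensen step justifying $|u_{x_0,\rho/2}-u_{x_0,\rho}|^p\leq |Q(x_0,\rho/2)|^{-1}\int_{Q(x_0,\rho/2)}|u-u_{x_0,\rho}|^p$, the inclusion $Q(x,R)\subseteq Q(x,2R)\cap Q(y,2R)$ for $|x-y|=R$, and the geometric-series summability for $\gamma>0$ are all sound, and you correctly isolate the only places where the Lipschitz hypothesis is genuinely used (the lower Ahlfors bound $|Q(x_0,\rho)|\geq A\rho^n$ and the comparability of intrinsic and Euclidean distance in $\overline{Q}$). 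In short: correct, and essentially identical to the cited proof; the only thing to add, if anything, would be a one-line reference for the measure--density and quasiconvexity of Lipschitz domains so the argument is self-contained.
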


Referring to \cite[Theorem 5.14]{Gia}, we can obtain the following interior estimates.
In what follows, for any domain $Q\subset \mathbb{R}^n$ we denote by the \emph{symbol $\mathcal{L}_{loc}^{p,\lambda}(Q)$}
the set of all  functions $u$ which satisfy for any $Q'\subset\subset Q$,
$\|u\|_{\mathcal{L}^{p,\lambda}(Q')}< \infty$.
\begin{lemma}\label{them5.14}
Let $Q$ is a Lipschitz domain in $\mathbb{R}^{n}$.
Let ${A}_{ij}^{\alpha\beta}$ be  constant and satisfy  \eqref{LHC}
and \eqref{AUP}.
Let $0<\gamma<1$, $\mu:=n+2\gamma-2$ and for any $\alp=1,\cdots, n,$ $i=1,\cdots, m$,
${F}_{i}^{(\alpha)}\in\mathcal{L}^{2,\mu+2}(Q)$ and ${H}^{(i)}\in L^{2,\mu}(Q)$.
Let ${\sf w}=({w}^{(1)},\cdots,{w}^{(m)})\in W^{1,2}(Q\subset \R^{n};\mathbb{R}^{m})$ be a weak solution of
\begin{align}\label{D2}
\sum_{\alp,\bt,j}\partial_{\alpha}({A}_{ij}^{\alpha\beta}\partial_{\beta}{w}^{(j)})
={H}^{(i)}-\sum\limits_{\alp}\partial_{\alpha} {F}_{i}^{(\alpha)}, ~~~\mbox{in}~ Q.
\end{align}
Then $\ptl_{\alp}{w}^{(i)}\in \mathcal{L}_{loc}^{2,\mu}(Q)$ for any $\alp=1,\cdots, n$ and $i=1,\cdots,m$,
and
there exists a positive constants $C$ depending on $n, m, \gamma, R,\lambda,\Lambda$ such that, for $B_{R}(x_{0})\subset Q$,
\begin{align}\label{1.38}
[\nabla {\sf w}]_{\gamma,B_{R/2}}:=
\max\limits_{\alp,i}[\ptl_{\alp}{w}^{(i)}]_{\gamma,B_{R/2}}
\leq C\Big( \frac{1}{R^{1+\gamma}}\|{\sf w}\|_{L^{\infty}(B_{R})}+
[{\sf F}]_{\gamma, B_{R}}+ \|{\sf H}\|_{L^{2,\mu}(B_{R})} \Big),
\end{align}
where
$
\|{\sf H}\|_{L^{2,\mu}(B_{R})}:=\max\limits_{i}\|{H}^{(i)}\|_{L^{2,\mu}(B_{R})}.
$
\end{lemma}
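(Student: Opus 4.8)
\textbf{Proof plan for Lemma \ref{them5.14}.}
The plan is to follow Campanato's classical scheme for constant-coefficient systems, comparing the solution $\sf w$ with the solution of the homogeneous system having the same boundary values on smaller balls, and then invoking Lemma \ref{prop2} to convert the Campanato estimate into the H\"older bound \eqref{1.38}. First I would fix $B_R(x_0)\subset Q$ and, for $0<\rho\leq r\leq R/2$, decompose $\sf w = \sf h + \sf z$ on $B_r(x_0)$, where $\sf h$ solves the homogeneous system $\sum_{\alpha,\beta,j}\partial_\alpha(A_{ij}^{\alpha\beta}\partial_\beta h^{(j)})=0$ in $B_r$ with $\sf h = \sf w$ on $\partial B_r$, and $\sf z = \sf w - \sf h \in W_0^{1,2}(B_r;\mathbb{R}^m)$ solves $\sum_{\alpha,\beta,j}\partial_\alpha(A_{ij}^{\alpha\beta}\partial_\beta z^{(j)}) = H^{(i)} - \sum_\alpha \partial_\alpha F_i^{(\alpha)}$.

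Next I would recall the two standard building blocks for constant-coefficient homogeneous elliptic systems (these are exactly the De Giorgi--Nash--Campanato interior estimates available from \cite[Chapter 5]{Gia}): the decay estimate
\begin{align*}
\int_{B_\rho(x_0)}|\nabla \sf h - (\nabla \sf h)_{x_0,\rho}|^2\,dx \leq C\Big(\frac{\rho}{r}\Big)^{n+2}\int_{B_r(x_0)}|\nabla \sf h - (\nabla \sf h)_{x_0,r}|^2\,dx,
\end{align*}
valid for $\rho\leq r$, together with the Caccioppoli-type energy bound $\int_{B_r}|\nabla \sf z|^2 \leq C\big(r^{-2}\|\sf z\|_{L^2(B_r)}^2 + \text{data}\big)$, which via Poincar\'e and the equation for $\sf z$ yields $\int_{B_r}|\nabla \sf z|^2 \leq C\big( r^{n+\mu}[\sf F]_{\gamma,B_R}^2 + r^{n}\|\sf H\|_{L^{2,\mu}(B_R)}^2 r^{?}\big)$; more precisely, since $F_i^{(\alpha)}\in\mathcal{L}^{2,\mu+2}$ contributes $r^{\mu+2}[\sf F]_\gamma^2$ after subtracting the mean and $H^{(i)}\in L^{2,\mu}$ contributes (via Poincar\'e, with one extra power of $r$) a term bounded by $r^{\mu+2}\|\sf H\|_{L^{2,\mu}}^2$, one gets $\int_{B_r}|\nabla\sf z|^2 \leq C r^{\mu+2}\big([\sf F]_{\gamma,B_R}^2 + \|\sf H\|_{L^{2,\mu}(B_R)}^2\big)$ with $\mu+2 = n+2\gamma$. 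Combining the two via the triangle inequality $|\nabla\sf w - (\nabla\sf w)_{x_0,\rho}| \leq |\nabla\sf h - (\nabla\sf h)_{x_0,\rho}| + |\nabla\sf z| + |(\nabla\sf z)_{x_0,\rho}|$ produces the iteration inequality
\begin{align*}
\phi(\rho) \leq C\Big(\frac{\rho}{r}\Big)^{n+2}\phi(r) + C r^{n+2\gamma}\big([\sf F]_{\gamma,B_R}^2 + \|\sf H\|_{L^{2,\mu}(B_R)}^2\big),
\end{align*}
where $\phi(\rho):=\int_{B_\rho(x_0)}|\nabla\sf w - (\nabla\sf w)_{x_0,\rho}|^2\,dx$.

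Then I would apply the standard iteration lemma (e.g. \cite[Lemma 5.13]{Gia} or the analogous iteration result in \cite{Gia}), using that $n+2\gamma < n+2$, to conclude $\phi(\rho) \leq C\rho^{n+2\gamma}\big(R^{-(n+2\gamma)}\phi(R) + [\sf F]_{\gamma,B_R}^2 + \|\sf H\|_{L^{2,\mu}(B_R)}^2\big)$ for all $\rho\leq R/2$, and uniformly in $x_0\in B_{R/2}$ after a covering argument. Bounding $\phi(R) \leq \int_{B_R}|\nabla\sf w|^2$ and using the interior Caccioppoli inequality for $\sf w$ itself, $\int_{B_R}|\nabla\sf w|^2 \leq C\big(R^{n-2}\|\sf w\|_{L^\infty(B_R)}^2 + R^{n+2\gamma}([\sf F]_\gamma^2 + \|\sf H\|_{L^{2,\mu}}^2)\big)$, gives $\nabla\sf w \in \mathcal{L}^{2,\mu}_{\mathrm{loc}}(Q)$ with the quantitative bound $[\nabla\sf w]_{2,\mu,B_{R/2}} \leq C\big(R^{-1-\gamma}\|\sf w\|_{L^\infty(B_R)} + [\sf F]_{\gamma,B_R} + \|\sf H\|_{L^{2,\mu}(B_R)}\big)$. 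Since $\mu = n+2\gamma-2$, i.e. $\mu = n + 2\gamma - 2$ corresponds under Lemma \ref{prop2} (applied with $p=2$, exponent $\lambda = \mu+2 = n+2\gamma$) to H\"older continuity with exponent $\gamma$, we obtain $[\nabla\sf w]_{\gamma,B_{R/2}} \leq C [\nabla\sf w]_{2,\mu,B_{R/2}}$ and \eqref{1.38} follows.

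\textbf{Main obstacle.} The delicate point is the bookkeeping of scaling powers in the estimate for $\sf z$: one must verify that the inhomogeneous data $F_i^{(\alpha)}\in\mathcal{L}^{2,\mu+2}$ and $H^{(i)}\in L^{2,\mu}$ both feed exactly the power $r^{n+2\gamma}$ into $\phi$ (for $\sf H$ this uses Poincar\'e to gain the extra $r^2$, which is why $H^{(i)}$ lives in $L^{2,\mu}$ rather than $L^{2,\mu+2}$), so that the forcing term in the iteration has the borderline exponent $n+2\gamma$ strictly below the homogeneous decay exponent $n+2$; this is exactly the hypothesis $0<\gamma<1$. A secondary technical care is that Lemma \ref{them5.14} assumes only $\sf w \in W^{1,2}$ with $\sf w \in L^\infty$ appearing on the right-hand side, so the Caccioppoli step for $\sf w$ must be run with a cutoff and the zeroth-order term $\|\sf w\|_{L^\infty(B_R)}$ controlling $\|\sf w\|_{L^2(B_R)}$ up to the volume factor $R^{n}$; these are routine once the scaling is pinned down.
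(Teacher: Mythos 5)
Your proposal follows the same Campanato perturbation scheme as the paper: decompose $\sf w$ on each ball into a homogeneous constant-coefficient part and a remainder with zero boundary values, use the Giaquinta decay estimate for the former and an energy/Caccioppoli bound for the latter (subtracting the mean of $\sf F$ and gaining an extra power of $r$ for $\sf H$ via Poincar\'e), iterate via the standard lemma, and convert to H\"older via Lemma \ref{prop2}. The scaling bookkeeping you flag as the main obstacle is handled exactly as you describe, so the proposal is correct and matches the paper's argument in all essentials.
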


\begin{proof}
By Proposition \ref{prop2} we have ${F}_{i}^{(\alpha)}\in \mathcal{L}^{2,n+2\gamma}(Q)$.
For a given ball $B_{R}:=B_{R}(x_{0})\subset Q$, the decomposition of ${\sf w}$ is as follows
\begin{align}\label{0.1}
{\sf w}={\sf w}_1+{\sf w}_2,\quad \mbox{in}\,B_{R},
\end{align}
where ${\sf w}_1$ and ${\sf w}_{2}$ satisfy, respectively,
\begin{align}\label{1.29}
\left\{
\begin{array}{ll}
  \sum\limits_{\alp,\bt,j} \partial_{\alpha}(A_{ij}^{\alpha\beta}
  \partial_{\beta}{ w}_1^{(j)})=0 , & \hbox{in}\,B_{R},  \\
    {\sf w}_1={\sf w}, & \hbox{on}\,\ptl B_{R},
  \end{array}
\right.
\end{align}
and
\begin{align}\label{1.28}
\left\{
  \begin{array}{ll}
   \sum_{\alp,\bt,j} \partial_{\alpha}({A}_{ij}^{\alpha\beta}\partial_{\beta}{w}_2^{(j)})= {H}^{(i)}
-\sum_{\alp}\partial_{\alpha} ({F}_{i}^{(\alpha)}-
({F}_{i}^{(\alpha)})_{R}), &\hbox{in}\,B_{R},  \\
    {\sf w}_2=0, & \hbox{on}\,\ptl B_{R},
  \end{array}
\right.
\end{align}
where $({F}_{i}^{(\alpha)})_{R}=\frac{1}{|B_R|}\int_{B_{R}}F_{i}^{\alp}\,dx$.

By  \cite[Proposition 5.8]{Gia}, for $0<\rho<\frac{3R}{4}$ we have
\begin{align}\label{0.2}
\int_{B_{\rho}}|\nabla{\sf w}_1-(\nabla{\sf w}_1)_{\rho}|^{2}\,dx\leq
C\left(\frac{\rho}{R}\right)^{n+2}\int_{B_{3R/4}}|\nabla{\sf w}_1-(\nabla{\sf w}_{1})_{R}|^2\,dx,
\end{align}
and for ${\sf w}_2$, multiplying \eqref{1.28} by ${\sf w}_2$ and using the integration by parts, one has
\begin{align}\label{1.30}
\int_{B_{3R/4}}|\nabla {\sf w}_2|^2\,dx\leq
C R^{\mu+2}\left(
[{\sf F}]_{\mathcal{L}^{2,\mu+2}(B_{R})}
+\|{\sf H}\|_{L^{2,\mu}(B_R)}\right).
\end{align}
Consequently,
\begin{align}\label{1.36}
\int_{B_{\rho}}|\nabla{\sf w}-(\nabla{\sf w})_{\rho}|^2\,dx
\leq& \int_{B_{\rho}}|\nabla{\sf w}_1-(\nabla{\sf w}_1)_{\rho}+\nabla{\sf w}_2-(\nabla{\sf w}_2)_{\rho}|^2\,dx\nonumber\\
\leq & C_1\left(\frac{\rho}{R}\right)^n\int_{B_{3R/4}}|\nabla{\sf w}_1-(\nabla{\sf w}_1)_{R}|^2\,dx
+C_2\int_{B_{3R/4}}|\nabla{\sf w}_2-(\nabla{\sf w}_2)_{3R/4}|^2\,dx\nonumber\\
\leq& C_1\left(\frac{\rho}{R}\right)^n\int_{B_{3R/4}}|\nabla{\sf w}-(\nabla{\sf w})_{3R/4}|^2\,dx
+C_2\int_{B_{3R/4}}|\nabla{\sf w}_2|^2\,dx.
\end{align}
Inserting \eqref{1.30} in \eqref{1.36} and using \cite[Lemma 5.13]{Gia}, we obtain
\begin{align}\label{1.37}
\int_{B_{\rho}}|\nabla{\sf w}-(\nabla{\sf w})_{\rho}|^2\,dx\leq C\Big[\left( \frac{\rho}{R}\right)^{\mu+2}\int_{B_{3R/4}}
|\nabla{\sf w}|^{2}\,dx+\rho^{\mu+2}(
[{\sf F}]^2_{\mathcal{L}^{2,\mu+2}(B_{R})}
+\|{\sf H}\|^2_{L^{2,\mu}(B_R)})\Big].
\end{align}
We assert that the following inequality holds:
\begin{align}\label{1.33}
\int_{B_{3R/4}}|\nabla{\sf w}|^2\,dx\leq C
\Big(\frac{1}{R^2}\int_{B_R}|{\sf w}|^2\,dx
+ R^{\mu+2}\Big([{\sf F}]^2_{\mathcal{L}^{2,\mu+2}(B_{R})}+\|{\sf H}\|^2_{L^{2,\mu}(B_{R})}\Big)  \Big),
\end{align}
where $C$ depends on $\lambda$ and boundedness of coefficients of \eqref{D2}.

Actually,
define a cut-off function $\zeta\in C_{c}^{\infty }(Q)$ as follows, $0\leq\zeta(x)\leq 1$,
\begin{align*}
\zeta(x)=
\left\{
  \begin{array}{ll}
    1, &\, \hbox{on}~B_{3R/4},  \\
    0, & \,\hbox{on}~B_{R}\backslash B_{3R/4}
  \end{array}
\right. ,\quad |\nabla\zeta(x)|\leq \frac{8}{R},
\end{align*}
and  choose as test function ${\sf w}\zeta^2$ into \eqref{system}.
From  strong ellipticity condition \eqref{LHC}, we obtain
\begin{align*}
\lambda\int_{B_{3R/4}}\zeta^2|\nabla{\sf w}|^2\,dx
\leq&\sum\limits_{\alp,\bt,i,j}
 \int_{B_{3R/4}}
\zeta^2{A}_{ij}^{\alp\bt}\ptl_{\bt}{w}^{(j)}\ptl_{\alp}{w}^{(i)}\,dx\nonumber\\
=&-\sum\limits_{\alp,\bt,i,j}\int_{B_{3R/4}}2\zeta{w}^{(i)}
{A}_{ij}^{\alpha\beta} \partial_{\beta}{w}^{(j)}\partial_{\alpha}\zeta\,dx
-\sum\limits_{\alp,i,j}\int_{B_{3R/4}} {B}_{ij}^{\alpha}{w}^{(j)}\partial_{\alpha}\left( {w}^{(i)}\zeta^2\right)\,dx\nonumber\\
&-\sum\limits_{\bt,i,j}\int_{B_{3R/4}}{w}^{(i)}\zeta^{2}({C}_{ij}^{\beta}\partial_{\beta}{w}^{(j)}+{D}_{ij}{w}^{(j)})\,dx
+\sum\limits_{i}\int_{B_{3R/4}}{w}^{(i)}\zeta^{2}{H}^{(i)}\,dx
\nonumber\\
&+\sum\limits_{\alp,i}\int_{B_{3R/4}} ({F}_{i}^{(\alpha)}-(
{F}_{i}^{(\alp)})_{3R/4})\partial_{\alpha}\left( {w}^{(i)}\zeta^2\right)\,dx.
\end{align*}
Then by using Cauchy's inequality and the properties of $\zeta$, we proved \eqref{1.33}.

Therefore,  using \eqref{1.37} and \eqref{1.33}, we have
\begin{align*}
\frac{1}{\rho^{\mu+2}}\int_{B_{\rho}}|\nabla{\sf w}-(\nabla {\sf w})_{\rho}|^2\,dx
\leq&C
\Big(\frac{1}{R^{\mu+4}}\int_{B_R}|{\sf w}|^2\,dx
+[{\sf F}]^2_{\mathcal{L}^{2,\mu+2}(B_{R})}+\|{\sf H}\|^2_{L^{2,\mu}(B_{R})}  \Big)\nonumber\\
\leq&C
\Big(\frac{1}{R^{2+2\gamma}}\int_{B_R}|{\sf w}|^2\,dx
+[{\sf F}]^2_{\mathcal{L}^{2,\mu+2}(B_{R})}+\|{\sf H}\|^2_{L^{2,\mu}(B_{R})}  \Big),
\end{align*}
where $C$ depends on $n,\gamma$. For any $x=(x',x_n)\in B_{R/2}$ and $0<\rho\leq R/4$, we have
\begin{align*}
\frac{1}{\rho^{\mu+2}}\int_{B_{\rho}(x)\cap B_{R/2}}|\nabla{\sf w}-(\nabla {\sf w})_{B_{\rho}(x)\cap B_{R/2}}|^2\,dy
\leq&\frac{1}{\rho^{\mu+2}}\int_{B_{\rho}}|\nabla{\sf w}-(\nabla {\sf w})_{\rho}|^2\,dy\nonumber\\
\leq&C
\Big(\frac{1}{R^{2+2\gamma}}\int_{B_R}|{\sf w}|^2\,dx
+[{\sf F}]^2_{\mathcal{L}^{2,\mu+2}(B_{R})}+\|{\sf H}\|^2_{L^{2,\mu}(B_{R})} \Big).
\end{align*}
By the equivalence between the H\"{o}lder space and the Campanato space
(see Lemma \ref{prop2}), this implies that \eqref{1.38} holds.
\end{proof}
Next, we  give the boundary estimate on half space $\ptl\R^n_{+}$. Consider
\begin{align}\label{1.27}
\left\{
  \begin{array}{ll}
    \sum\limits_{\alp,\bt,j}\partial_{\alpha}({A}_{ij}^{\alpha\beta}\partial_{\beta}{w}^{(j)})
={H}^{(i)}-\sum\limits_{\alp}\partial_{\alpha} {F}_{i}^{(\alpha)}, & \hbox{in}~ \R^{n}_{+}, \\
    {\sf w}=0, & \hbox{on}~ \ptl\R_+^{n}.
  \end{array}
\right.
\end{align}

\begin{corollary}\label{them3}
In the hypothesis of Lemma \ref{them5.14}, let ${\sf w}\in W^{1,2}(\R^n_+;\mathbb{R}^{m}) $
be the solution of \eqref{1.27}, for any $x_{0}\in \ptl\R_{+}^n$ and
$ \mathcal{B}_{R}^+(x_0):={B}_{R}(x_0)\cap\ptl\R^n_+$, then there
is a constant $C$ only depended to $n, \gamma,\lambda, \Lambda$ such that,
\begin{align*}
[\nabla {\sf w}]_{\gamma,\mathcal{B}_{R/2}^+(x_0) }\leq
C\Big(\frac{1}{R^{1+\gamma}} \|{\sf w}\|_{L^{\infty}(\mathcal{B}_{R}^+(x_0))}+[{\sf F}]_{\gamma,\mathcal{B}_{R}^+(x_0)}
+\|{\sf H}\|_{L^{2,\mu}(\mathcal{B}_{R}^{+}(x_0))}
\Big).
\end{align*}
\end{corollary}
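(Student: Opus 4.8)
The plan is to deduce Corollary \ref{them3} from the interior estimate Lemma \ref{them5.14} by the standard device of extending the solution across the flat boundary $\ptl\R^n_+$. Since ${\sf w}=0$ on $\{x_n=0\}$, I would perform an odd reflection in the $x_n$-variable: set $\wdt{w}^{(i)}(x',x_n):={w}^{(i)}(x',x_n)$ for $x_n>0$ and $\wdt{w}^{(i)}(x',x_n):=-{w}^{(i)}(x',-x_n)$ for $x_n<0$, and correspondingly reflect the coefficients $A_{ij}^{\alpha\beta}$ and the data $H^{(i)},F_i^{(\alpha)}$ in the way that is compatible with the equation. Concretely, the coefficients must be reflected evenly when the number of indices equal to $n$ among $\{\alpha,\beta\}$ is even and oddly when it is odd (here they are constant, so one just declares $A_{ij}^{\alpha\beta}$ on the lower half-space to equal $A_{ij}^{\alpha\beta}$ or $-A_{ij}^{\alpha\beta}$ by this parity rule), and similarly $F_i^{(n)}$ is reflected evenly while $F_i^{(\alpha)}$, $\alpha<n$, and $H^{(i)}$ are reflected oddly. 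One then checks that $\wdt{\sf w}$ is a weak solution of an equation of the form \eqref{D2} on the full ball $B_R(x_0)$, with the reflected constant coefficients still satisfying \eqref{LHC} and \eqref{AUP} (strong ellipticity is preserved because the quadratic form is invariant under the sign flip $\xi_n\mapsto-\xi_n$), and with the reflected data controlled: $[\wdt{\sf F}]_{\gamma,B_R}\leq C[{\sf F}]_{\gamma,\mathcal{B}_R^+}$, $\|\wdt{\sf H}\|_{L^{2,\mu}(B_R)}\leq C\|{\sf H}\|_{L^{2,\mu}(\mathcal{B}_R^+)}$, and $\|\wdt{\sf w}\|_{L^\infty(B_R)}=\|{\sf w}\|_{L^\infty(\mathcal{B}_R^+)}$.

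Having set this up, I would apply Lemma \ref{them5.14} to $\wdt{\sf w}$ on $B_R(x_0)$ to get
\[
[\nabla\wdt{\sf w}]_{\gamma,B_{R/2}(x_0)}\leq C\Big(\tfrac{1}{R^{1+\gamma}}\|\wdt{\sf w}\|_{L^\infty(B_R)}+[\wdt{\sf F}]_{\gamma,B_R}+\|\wdt{\sf H}\|_{L^{2,\mu}(B_R)}\Big),
\]
and then restrict back to the upper half: since $\mathcal{B}_{R/2}^+(x_0)\subset B_{R/2}(x_0)$ and $\nabla\wdt{\sf w}=\nabla{\sf w}$ there, the H\"older seminorm of $\nabla{\sf w}$ over $\mathcal{B}_{R/2}^+(x_0)$ is bounded by that of $\nabla\wdt{\sf w}$ over $B_{R/2}(x_0)$. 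Combining with the above data bounds yields exactly the claimed estimate with a constant depending only on $n,\gamma,\lambda,\Lambda$. Note $\mu=n+2\gamma-2$ is unchanged by the reflection, and the hypothesis $F_i^{(\alpha)}\in\mathcal{L}^{2,\mu+2}$, $H^{(i)}\in L^{2,\mu}$ is inherited by the reflected data, so Lemma \ref{them5.14} applies verbatim.

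The main obstacle — and the only point that needs real care — is verifying that the odd reflection actually produces a \emph{weak} solution of a divergence-form system across $\{x_n=0\}$, i.e. that there are no spurious distributional terms supported on the hyperplane. This is where the boundary condition ${\sf w}=0$ on $\ptl\R^n_+$ is essential: it forces the tangential derivatives $\ptl_\alpha{w}^{(i)}$, $\alpha<n$, to vanish on the interface in the trace sense, so that the reflected gradient has the right parity for the weak formulation to close up without a jump. I would check this by taking an arbitrary test function $\phi\in C_c^\infty(B_R(x_0);\R^m)$, splitting the integral $\int_{B_R}$ into $\int_{B_R\cap\{x_n>0\}}+\int_{B_R\cap\{x_n<0\}}$, changing variables $x_n\mapsto -x_n$ in the second piece to reduce it to an integral over the upper half against the test function $\phi(x',-x_n)$ (with appropriate signs), and invoking the weak formulation of \eqref{1.27} on $\R^n_+$ for both pieces — the boundary integrals on $\{x_n=0\}$ that would otherwise appear cancel precisely because of the parity choices and the vanishing trace. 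Once this identity is in hand the rest is bookkeeping on norms.
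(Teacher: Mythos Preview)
Your reflection argument does not go through for general constant-coefficient strongly elliptic systems, for two related reasons.

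First, under the parity rule you describe, the extended coefficient tensor equals $A_{ij}^{\alpha\beta}$ on $\{x_n>0\}$ but $\tau_\alpha\tau_\beta A_{ij}^{\alpha\beta}$ on $\{x_n<0\}$ (with $\tau_n=-1$, $\tau_\alpha=1$ for $\alpha<n$). Unless every mixed coefficient $A_{ij}^{\alpha n}$ with $\alpha<n$ vanishes, the extended coefficients are only \emph{piecewise} constant, with a genuine jump across $\{x_n=0\}$. Lemma~\ref{them5.14} is stated for constant coefficients, and its Campanato comparison step relies on this; it does not apply to a system whose leading coefficients are discontinuous across an interface, and indeed $C^{1,\gamma}$ regularity can fail in that situation. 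So the sentence ``apply Lemma~\ref{them5.14} to $\wdt{\sf w}$ on $B_R(x_0)$'' is exactly the step that breaks.

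Second, your claim $[\wdt{\sf F}]_{\gamma,B_R}\leq C[{\sf F}]_{\gamma,\mathcal{B}_R^+}$ is false in general: for $\alpha<n$ you reflect $F_i^{(\alpha)}$ oddly, and unless $F_i^{(\alpha)}(\,\cdot\,,0)\equiv 0$ the extension is not even continuous across $\{x_n=0\}$, hence not in $\mathcal{L}^{2,\mu+2}(B_R)$. No such vanishing is assumed in the hypotheses.

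The paper avoids both difficulties by not reflecting at all. It repeats the decomposition ${\sf w}={\sf w}_1+{\sf w}_2$ of Lemma~\ref{them5.14} directly on the half-ball $\mathcal{B}_R^+(x_0)$, invokes the boundary Campanato decay estimate for the homogeneous Dirichlet problem on a half-ball (\cite[(5.38), Theorem~5.21]{Gia}) to get the analogue of \eqref{0.2} for ${\sf w}_1$, and then runs the remaining steps of the proof of Lemma~\ref{them5.14} verbatim. This route needs no structural restriction on the cross terms $A_{ij}^{\alpha n}$ and no compatibility of ${\sf F}$ at the boundary. Your reflection trick works cleanly only for operators with $A_{ij}^{\alpha n}+A_{ij}^{n\alpha}=0$ for all $\alpha<n$ (e.g.\ the Laplacian), not for the general systems treated here.
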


\begin{proof}
Firstly, we decompose ${\sf w}={\sf w}_1+{\sf w}_2$ as shown in \eqref{0.1},
where ${\sf w}_1$, ${\sf w}_2$ satisfy \eqref{1.29} and \eqref{1.28} in $\mathcal{B}_{R}^+(x_0)$
respectively. It follows from \cite[(5.38) in Theorem 5.21]{Gia}
that \eqref{0.2} holds for ${\sf w}_1$.
The proof of the corollary can be obtained by the method in the proof of the
  Lemma \ref{them5.14} and some elementary arguments. We omit the details.
\end{proof}

\begin{proof}[Proof of Theorem \ref{C1Gamma}.]

Since $\Gamma$ is $ C^{1,\gamma}$, then for any $x_{0}\in \Gamma$,there exists a neighbourhood $U$ of $x_{0}$ and a homeomorphism
$\Psi \in C^{1,\gamma}(U)$ such that
\begin{align*}
\Psi(U\cap Q)=\mathcal{B}_{1}^{+}:=\{y\in{B}_{1}(0), y_{n}>0\},~~~~
\Psi(U\cap\Gamma)=\partial\mathcal{B}_{1}^{+}\cap\{y\in\mathbb{R}^{n}:y_{n}=0\}.
\end{align*}
Under transformation $y=\Psi(x)=(\Psi^{(1)}(x),\cdots,\Psi^{(n)}(x)),$ we denote
\begin{align*}
\mathcal{W}(y):={\sf w}(\Psi^{-1}(y)),\quad \mathcal{J}(y):=\frac{\partial\left((\Psi^{-1})^{(1)},\cdots,(\Psi^{-1})^{(n)}\right)}{\partial(y_1,\cdots,y_n)},
\quad |\mathcal{J}(y)|:=\det{\mathcal{J}(y)},
\end{align*}
and
\begin{align*}
\mathcal{A}_{ij}^{\alpha\beta}(y):&=\sum_{\hat{\alp},\hat{\bt}}
|\mathcal{J}(y)|{A}_{ij}^{\hat{\alpha}\hat{\beta}}\left(\Psi^{-1}(y)\right)
(\partial_{\hat{\beta}}(\Psi^{-1})^{(\beta)}(y))^{-1}
\partial_{\hat{\alpha}}(\Psi)^{(\alpha)}(\Psi^{-1}(y)),\nonumber\\
\mathcal{B}_{ij}^{\alpha}(y):&=\sum_{\hat{\alp}}|\mathcal{J}(y)|
{B}_{ij}^{\hat{\alpha}}\left(\Psi^{-1}(y)\right)
\partial_{\hat{\alpha}}(\Psi)^{(\alpha)}(\Psi^{-1}(y)),\\
\mathcal{C}_{ij}^{\beta}(y):&=\sum_{\hat{\bt}}|\mathcal{J}(y)|
{C}_{ij}^{\hat{\beta}}\left(\Psi^{-1}(y)\right)
\partial_{\hat{\beta}}(\Psi)^{(\beta)}\left(\Psi^{-1}(y)\right),
\quad\mathcal{D}_{ij}(y):=|\mathcal{J}(y)|{D}_{ij}\left(\Psi^{-1}(y)\right)\nonumber,\\
\mathcal{F}_{i}^{(\alpha)}(y):&=\sum_{\hat{\alp}}|\mathcal{J}(y)|
{F}_{i}^{\hat{\alpha}}\left(\Psi^{-1}(y)\right) \partial_{\hat{\alpha}}(\Psi)^{(\alpha)}\left(\Psi^{-1}(y)\right),\quad
\mathcal{H}^{(i)}(y):=|\mathcal{J}(y)|{H}^{(i)}\left(\Psi^{-1}(y)\right)\nonumber,
\end{align*}
where $\hat{\alp},\hat{\bt}=1,\cdots,n$.
Then \eqref{system} becomes
\begin{align*}
\sum\limits_{\alp,\bt,i,j}\partial_{\alpha}(\mathcal{A}_{ij}^{\alpha\beta}\partial_{\beta}\mathcal{W}^{(j)}
+\mathcal{B}_{ij}^{\alpha}\mathcal{W}^{(j)})
+\mathcal{C}_{ij}^{\beta}\partial_{\beta}\mathcal{W}^{(j)}
+\mathcal{D}_{ij}\mathcal{W}^{(j)}
=\mathcal{H}^{(i)}
-\sum\limits_{\alp}\partial_{\alpha} \mathcal{F}_{i}^{(\alpha)}
 ~~\hbox{in}\,\mathcal{B}_{R}^{+},
\end{align*}
and $ \mathcal{W}=0  ~\hbox{on}~ \partial \mathcal{B}_{R}^{+}\cap\partial \mathbb{R}^{n}_{+}$.
Let $y_{0}=\Psi(x_{0})$. Freeze the coefficients and rewrite the above formula in the form
\begin{align*}
\sum\limits_{\alp,\bt,j}\partial_{\alpha}
(\mathcal{A}_{ij}^{\alpha\beta}(y_{0})\partial_{\beta}\mathcal{W}^{(j)}(y))
=&\sum\limits_{\alp,\bt,j}
-\partial_{\alpha}((\mathcal{A}_{ij}^{\alpha\beta}(y)
-\mathcal{A}_{ij}^{\alpha\beta}(y_{0}))\partial_{\beta}\mathcal{W}^{(j)}(y)
+\mathcal{B}_{ij}^{\alpha}(y)\mathcal{W}^{(j)}(y))\nonumber\\
&\quad+\sum\limits_{\alp,\bt,j}
(\mathcal{C}_{ij}^{\beta}(y)\partial_{\beta}\mathcal{W}^{(j)}(y)
-\mathcal{D}_{ij}(y)\mathcal{W}^{(j)}(y)
+\mathcal{H}^{(i)}(y)-\partial_{\alpha} \mathcal{F}_{i}^{(\alpha)}(y)).
\end{align*}
 Then, by  Corollary \ref{them3} we have that for $0<R<1$,
\begin{align*}
\big[\nabla \mathcal{W}\big]_{\gamma,\, \mathcal{B}^+_{R/2}}
\leq &C \Big(\frac{1}{R^{1+\gamma}}\| \mathcal{W}\|_{L^\infty( \mathcal{B}^+_{R})}+[ \mathcal{F}]_{\gamma,\, \mathcal{B}^+_{R}} \Big)\\
&+C\sum\limits_{\bt,j}\Big(\big[(\mathcal{A}_{ij}^{\alpha\beta}(y)-\mathcal{A}_{ij}^{\alpha\beta}(y_{0}))\partial_{\beta}\mathcal{W}^{(j)}
\big]_{\gamma, \, \mathcal{B}^+_{R} }
+\big[\mathcal{B}_{ij}^{\alpha}(y)\mathcal{W}^{(j)} \big]_{\gamma, \, \mathcal{B}^+_{R} }\Big)\nonumber\\
&+C\Big( \sum\limits_{\bt,j} \left\|\mathcal{C}_{ij}^{\beta}(y)\partial_{\beta}\mathcal{W}^{(j)}
-\mathcal{D}_{ij}(y)\mathcal{W}^{(j)}\right\|_{L^{2,\mu}({B}^+_{R})} +\|\mathcal{H}\|_{L^{2,\mu}({B}^+_{R})}\Big).
\end{align*}
Since $\mathcal{A}_{ij}^{\alpha\beta}(y), \mathcal{B}_{ij}^{\alpha}(y), \mathcal{C}_{ij}^{\beta}(y),
\mathcal{D}_{ij}(y)\in C^{\gamma}(\mathcal{B}_{R}^{+})$, we have
\begin{align*}
\sum\limits_{\bt,j}\big[(\mathcal{A}_{ij}^{\alpha\beta}(y)
-\mathcal{A}_{ij}^{\alpha\beta}(y_{0}))\partial_{\beta}\mathcal{W}^{(j)}
\big]_{\gamma, \, \mathcal{B}^+_{R} }\leq& C\left(  R^\gamma  [\nabla \mathcal{W}]_{\gamma,\, \mathcal{B}^+_{R}}+\|\nabla  \mathcal{W}\|_{L^\infty(\mathcal{B}_R^+)}\right),\\
\sum\limits_{j}\big[\mathcal{B}_{ij}^{\alpha}(y)\mathcal{W}^{(j)} \big]_{\gamma, \, \mathcal{B}^+_{R}}
\leq& C R^{\gamma}\|\mathcal{W}\|_{L^{\infty}( \mathcal{B}^+_{R})},
\end{align*}
and
\begin{align*}
\sum\limits_{\bt,j}\left\|\mathcal{C}_{ij}^{\beta}(y)\partial_{\beta}\mathcal{W}^{(j)}
-\mathcal{D}_{ij}(y)\mathcal{W}^{(j)}\right\|_{L^{2,\mu}({\mathcal{B}}^+_{R})}
\leq C\|\nabla \mathcal{W}\|_{L^{\infty}(\mathcal{B}^+_{R})}+\|\mathcal{W}\|_{L^{\infty}(\mathcal{B}^+_{R})}.
\end{align*}
In view of  the  interpolation inequality (\cite[Lemma 6.35]{gt}), we can obtain
\[\|\nabla  \mathcal{W}\|_{L^\infty(\mathcal{B}_R^+)} \leq R^{\gamma }[\nabla \mathcal{W}]_{\gamma, \, \mathcal{B}^+_{R} } + \frac{C}{R}\| \mathcal{W}\|_{L^\infty( \mathcal{B}^+_{R})},\]
where $C=C(n)$. Hence,
\begin{align*}
\big[\nabla \mathcal{W}\big]_{\gamma,\, \mathcal{B}^+_{R/2}}
\leq C (\frac{1}{R^{1+\gamma}}\|\mathcal{W}\|_{L^\infty( \mathcal{B}^+_{R})}+R^\gamma [\nabla \mathcal{W}]_{\gamma, \, \mathcal{B}^+_{R} }+ [\mathcal{F}]_{\gamma,\, \mathcal{B}^+_{R}}+\|\mathcal{H}\|_{L^{2,\mu}({B}^+_{R})}).
\end{align*}
Since $\Psi$ is a homeomorphism, thus, changing back to the variable $x$, we obtain
\[\big[\nabla {\sf w}\big]_{\gamma,\, \mathcal{N}^\prime} \leq C \Big
(\frac{1}{R^{1+\gamma}}\|\widetilde{\sf w}\|_{L^\infty(\mathcal{N})}+R^\gamma [\nabla {\sf w}]_{\gamma, \, \mathcal{N} }+ [{\sf F}]_{\gamma,\, \mathcal{N}}+\|\wdt{\sf H}\|_{L^{2,\mu}(\mathcal{N})}\Big),\]
where $\mathcal{N}=\Psi ^{-1}(\mathcal{B}^+_{R})$,  $\mathcal{N}^\prime=\Psi^{-1}(\mathcal{B}^+_{R/2})$ and $C=C(n, \gamma, \Psi)$. Furthermore, there exists a constant $0<\sigma<1$, independent on $R$, such that $ B_{\sigma R}(x_0)\cap Q \subset \mathcal{N}^\prime$.

Therefore, recalling that $\Gamma \subset \ptl Q$ is a boundary portion, for any domain $Q'\subset\subset Q\cup \Gamma$ and for each $x_0\in Q^\prime \cap\Gamma$,
there exist  $\mathcal{R}_{0}:=\mathcal{R}_{0}(x_0)$ and $C_0=C_0(n, \gamma, x_0)$
such that,
\begin{align}\label{nearGamma}
\big[\nabla {\sf w}\big]_{\gamma,\, B_{\mathcal{R}_{0}}(x_0)\cap Q^\prime}
\leq C_0 \Big(\mathcal{R}_{0}^\gamma [\nabla {\sf w}]_{\gamma,
 \, Q^\prime}+\frac{1}{\mathcal{R}_{0}^{1+\gamma}}\|{\sf w}\|_{L^\infty( Q)}+ [{\sf F}]_{\gamma,\, Q}
+\|{\sf H}\|_{L^{2,\mu}(Q)}\Big).
\end{align}
By using the boundary estimates \eqref{nearGamma} near $\Gamma,$
 the finite covering theorem, and
 Lemma \ref{them5.14},
we can obtain
\begin{equation*}
\big[\nabla {\sf w}\big]_{\gamma,\, Q^\prime } \leq C\left( \|{\sf w}\|_{L^\infty( Q)}
+[{\sf F}\,]_{\gamma,\, Q}
+\|{\sf H}\|_{L^{2,\mu}(Q)}
\right),
\end{equation*}
where $C=C(n, \gamma, Q^\prime, Q)$. The proof details can be referred to the proof
of \cite[Theorem 2.3]{CL}. By using the interpolation inequality
(\cite[Lemma 6.35]{gt}), we obtain \eqref{WC1gamma}.
\end{proof}

\subsection{Proof of Theorem \ref{them2}}
In this subsection, we give the proof of $W^{1,p}$ estimates to the weak
solution of the systems as in Definition \ref{defn2}.

\begin{proof}[Proof of Theorem \ref{them2}]
First, we give the $W^{1, p}$ interior estimates. For any ball
$B_{3R/4}:=B_{3R/4}(x_0) \subset Q$ with $R \leq 1$,
since ${\sf w}\neq 0$ on $\ptl B_{3R/4}$, we choose a cut-off function $\eta\in C_0^\infty(B_{3R/4})$ such that
for $0 <\rho <3R/4$,
\begin{equation*}
0 \leq \eta \leq 1 \quad \eta= 1 \quad \text{in} ~~B_{\rho}, \quad |\nabla \eta| \leq \frac{C}{R-\rho}.
\end{equation*}
We have $\eta {\sf w}$ satisfies
\begin{align*}
&\sum\limits_{\alp,\bt,i,j}\int_{B_{3R/4}}
{A}_{ij}^{\alpha\beta}(x_{0})\ptl_{\bt}(\eta {w}^{(j)})
\ptl_{\alp}\phi^{(i)}
\,dx\\
=&\sum\limits_{\alp,\bt,i,j}\int_{B_{3R/4}}
\left({A}_{ij}^{\alpha\beta}(x_{0})-{A}_{ij}^{\alpha\beta}(x)\right)\ptl_{\bt}(\eta{w}^{(j)})\ptl_\alp \phi^{(i)} \, dx\nonumber\\
&+ \sum\limits_{\alp,i}\Big(\int_{B_{3R/4}} T^{(i)} \phi^{(i)} \,dx
+ \int_{B_{3R/4}} K_{i}^{(\alp)}\ptl_\alp \phi^{(i)}\Big)
,
\quad\forall\,{\sf \phi} \in C_0^\infty(B_{3R/4}; \mathbb{R}^m),
\end{align*}
 where
\begin{align*}
T^{(i)}=&-\sum\limits_{\alp,\bt,j}\left(\big({A}_{ij}^{\alp\beta}(x)\ptl_{\beta}{w}^{(j)}
+{B}_{ij}^{\alp}(x){w}
^{(j)}\big)\ptl_{\alp}\eta
-{C}_{ij}^{\beta}(x)\big(2{w}^{(j)}\ptl_{\beta}\eta+\eta\ptl_{\beta} {w}^{(j)}\big)\right)
\nonumber\\
&\quad-\sum\limits_{j}{D}_{ij}(x)(\eta{w}^{(j)})
+{H}^{(i)}\eta
+\sum\limits_{\alp}({F}_{i}^{(\alp)}-({F}_{i}^{(\alp)})_{3R/4})\ptl_{\alp}\eta,
\\
K_{i}^{(\alp)}=&\sum\limits_{\bt,j}\left({A}_{ij}^{\alp\beta}(x){w}^{(j)}\ptl_{\beta}\eta
-{B}_{ij}^{\alp}(x){w}^{(j)}\eta
+({F}_{i}^{(\alp)}-({F}_{i}^{(\alp)})_{3R/4})\eta\right).
\end{align*}

Let ${\sf v}=(v^{(1)},\cdots,v^{(m)})\in H_0^1(B_{3R/4}; \R^m)$ be the weak solution of
\begin{align}\label{1.18}
-\Delta v^{(i)}=T^{(i)}.
\end{align}
Thus, we can obtain that $\eta{\sf w}$ satisfies
\begin{align*}
\sum\limits_{\alp,\bt,i,j}\int_{B_{3R/4}}
&{A}_{ij}^{\alpha\beta}(x_{0})\ptl_{\bt}(\eta {w}^{(j)})
\ptl_{\alp}\phi^{(i)}
\,dx&\nonumber\\
\quad=&\sum\limits_{\alp,\bt,i,j}\int_{B_{3R/4}}\left(({A}_{ij}^{\alpha\beta}(x_{0})-{A}_{ij}^{\alpha\beta}(x))\ptl_{\bt}(\eta {w}^{(j)})+{K}_{i}^{(\alpha)}\right)
\ptl_{\alp}\phi^{(i)}\,dx,
\end{align*}
where ${K}_{i}^{(\alp)}:=K_{i}^{(\alp)}+\ptl_{\alp} v^{(i)}$.

Since ${F}_{i}^{(\alp)}\in C^{\gamma}(B_{3R/4})$, then $({F}_{i}^{(\alp)}-({F}_{i}^{(\alp)})_{3R/4}))\in L^p(B_{3R/4})$ for any $n\leq p<\infty$. We firstly assume that ${\sf w}\in W^{1, q}(B_{3R/4}; \R^n)$, $q\geq 2$.
Then, combining with Sobolev embedding theorem and the boundedness of coefficients, ${H}^{(i)}\in L^{p}(B_{3R/4})$, we can get
\begin{align*}
K_{i}^{(\alp)}\in L^{\min(p,q^*)}(B_{3R/4})\quad\hbox{and}\quad T^{(i)}\in L^{\min(p,q)}(B_{3R/4}),
\end{align*}
where $q^*:=\frac{nq}{n-q}$ is the Sobolev conjugate of $q$.
To write simply, we use $a\wedge b$ to represent $\min(a,b)$.
By the \eqref{1.18},
\begin{align*}
-\Delta(\ptl_{\alp} v^{(i)})=\ptl_\alp T^{(i)}, \quad \hbox{for}~~~ \alp=1,2,\cdots,n.
\end{align*}
The \cite[Theorem 7.1]{Gia} guarantees that $\nabla (\ptl_{\alp}v^{(i)})\in L^{p\wedge q}(B_{3R/4})$ and
\begin{align*}
\|\nabla (\ptl_{\alp}v^{(i)})\|_{L^{p \wedge q}(B_{3R/4})}\leq C\|T^{(i)}\|_{L^{p\wedge q}(B_{3R/4})},
\end{align*}
where $C$ depends on $n, \lambda, p, q$. Combining with the Sobolev embedding theorem that
\begin{align*}
 \ptl_{\alp}v^{(i)}\in L^{(p\wedge q)^*}(B_{3R/4}).
\end{align*}
It follows from $p\wedge q^*\leq (p\wedge q)^* $ that ${K}_{i}^{(\alp)}\in L^{p\wedge q^*}(B_{3R/4})$.
Let $s:=p\wedge q^*$ and define ${T}: W_{0}^{1,s}(B_{3R/4})\rightarrow W_{0}^{1,s}(B_{3R/4})$ as follows,
$$
{T}({\sf V})={\sf v},\quad \hbox{for any}~ {\sf V}\in W_{0}^{1,s}(B_{3R/4}),
$$
where $v\in W_{0}^{1,s}(B_{3R/4})$ is the solution of the following elliptic system:
\begin{align*}
\sum\limits_{\alp,\bt,i,j}\int_{B_{3R/4}}
{A}_{ij}^{\alpha\beta}(x_{0})\ptl_{\bt}v^{(j)}
\ptl_{\alp}\phi^{(i)}
\,dx=\sum\limits_{\alp,\bt,i,j}\int_{B_{3R/4}}\left(({A}_{ij}^{\alpha\beta}(x_{0})-{A}_{ij}^{\alpha\beta}(x))
\ptl_{\bt}V^{(j)}+{K}_{i}^{(\alpha)}\right)
\ptl_{\alp}\phi^{(i)}\,dx.
\end{align*}
By using  \cite[Theorem 7.1]{Gia} again , we have
\begin{align}\label{1.22}
\|\nabla {\sf v}\|_{L^{s}(B_{3R/4})}\leq C
\|[{\sf A}(x_{0})-{\sf A}(x)]\nabla{\sf  V}\|_{L^{s}(B_{3R/4})}
+C\|{\sf K}\|_{L^{s}(B_{3R/4})},
\end{align}
where ${\sf A}(x)\nabla{\sf V}$ represent matrix $({A}_{ij}^{\alpha\bt}(x)\ptl_{\bt}V^{(j)})$, and $C$ depends on
$n, \lambda,  \|{\sf A}\|_{C^{\gamma}(Q)},p,q$.

When $R$ is sufficiently small, it is proved by Poincar\'{e} inequality and \eqref{1.22} that ${T}$ is a contractive mapping.
 $\eta {\sf w}$ is the only fixed point. See \cite[Theorem 7.2]{Gia} for details.
By \eqref{1.22}, we have
\begin{align*}
\|\nabla (\eta{\sf w})\|_{L^{s}(B_R)}\leq C
\|[{\sf A}(x_{0})-{\sf A}(x)]\nabla (\eta{\sf w})\|_{L^{s}(B_R)}
+C\|{\sf K}\|_{L^{s}(B_R)}.
\end{align*}
Therefore, for sufficiently small $R$, we can obtain
\begin{align*}
\|\nabla (\eta {\sf w})\|_{L^{p\wedge q^*}(B_{3R/4})}&\leq C\|{\sf K}\|_{L^{p\wedge q^*}(B_{3R/4})}\nonumber\\
&\leq \|{\sf T}\|_{L^{p\wedge q}(B_{3R/4})}+\|{\sf K}\|_{L^{p\wedge q^*}(B_{3R/4})}\nonumber\\
&\leq \frac{C}{R-\rho}
\left([{\sf F}]_{\gamma,B_{3R/4}}+\|{\sf H}\|_{L^{\infty}(B_{{3R/4}})}+\| {\sf w}\|_{W^{1,q}(B_{3R/4})}
\right),
\end{align*}
where $C$ depends on $ n, \lambda,\|{\sf A}\|_{C^{\gamma}(Q)}, p, q$. Thus
\begin{align}\label{1.23}
\|\nabla {\sf u}\|_{L^{p\wedge q^*}(B_{\rho})}\leq
\frac{C}{R-\rho}\left([{\sf F}]_{\gamma,B_{3R/4}}+\|{\sf H}\|_{L^{\infty}(B_{3R/4})}+\| {\sf w}\|_{W^{1,q}(B_{3R/4})}
\right).
\end{align}
Next, we prove that $\nabla {\sf w}\in L^p(B_{R/2})$. Similar to the proof of \cite[Theorem 2.4]{CL},
choose a series of balls with radii
\begin{equation*}
\frac{R}{2}<\cdots<R_k<\cdots<R_2<R_1<\frac{3R}{4}.
\end{equation*}
First, let $\rho=R_1, q=2$ in \eqref{1.23}, then
\begin{align*}
\|\nabla {\sf u}\|_{L^{p\wedge 2^*}(B_{R_1})}\leq
\frac{C}{R-R_1}\left([{\sf F}]_{\gamma,B_{3R/4}}+\|{\sf H}\|_{L^{\infty}(B_{3R/4})}+\|{\sf w}\|_{W^{1,2}(B_{3R/4})}
\right).
\end{align*}
If $p\leq 2^*$, it can be obtained by interpolation inequality (see \cite[Theorem 5.8]{ada}) that
\begin{align*}
\|{\sf w}\|_{L^p(B_{R_1})}\leq C\|{\sf w}\|_{W^{1,2}(B_{R_1})}^{\theta}\|{\sf w}\|_{L^{2}(B_{R_1})}^{1-\theta}
\leq C\|{\sf w}\|_{W^{1,2}(B_{R_1})},
\end{align*}
where $\theta=n/2-n/p$ with $2\leq p\leq 2^*$. Combining with \eqref{1.23}, the proof is completed.
If $p>2^*$, then $\nabla {\sf w} \in L^{2^*}(B_{R_1})$ and
\begin{align}\label{1.24}
\|\nabla {\sf w}\|_{L^{ 2^*}(B_{R_1})}\leq
\frac{C}{R-R_1}\left([{\sf F}]_{\gamma,B_{3R/4}}+\|{\sf H}\|_{L^{\infty}(B_{3R/4})}+
\|{\sf w}\|_{W^{1,2}(B_{3R/4})}
\right).
\end{align}
By taking $R=R_1$, $\rho=R_2$ and $q=2^*$ in \eqref{1.23} and combining with \eqref{1.24}, one has
\begin{align*}
\|\nabla {\sf w}\|_{L^{ p\wedge 2^{**}}(B_{R_2})}
\leq \frac{C}{(R-R_1)(R_1-R_2)}
\left(
[{\sf F}]_{\gamma,B_{3R/4}}
+\|{\sf H}\|_{L^{\infty}(B_{3R/4})}+
+\|{\sf w}\|_{W^{1, 2}(B_{3R/4})}
\right).
\end{align*}
Similarly, if $p\leq 2^{**}$, using above formula and  interpolation inequality (see \cite[Theorem 5.8]{ada}), we have completed the proof of the theorem.

If $p>2^{**}$, continuing the above argument within finite steps,  with the
help of interpolation inequality (see \cite[Theorem 5.8]{ada}), we obtain
\begin{equation}\label{1.34}
\| {\sf w}\|_{W^{1,p}(B_{R/2})}\leq C\left(
[{\sf F}]_{\gamma,B_{3R/4}}
+\|{\sf H}\|_{L^{\infty}(B_{3R/4})}
+\|{\sf w}\|_{W^{1,2}(B_{3R/4})} \right),
\end{equation}
where $C$ depends on $n, \lam,  p, \|{\sf A}\|_{C^{\gamma}(Q)}, \dist(B_R, \ptl Q))$.
Similar to the proof of \eqref{1.33}, we can obtain
\begin{align*}
\int_{B_{3R/4}}|\nabla{\sf w}|^2\,dx
\leq C\left(\|{\sf w}\|^2_{L^{2}(B_R)}
+[{\sf F}]^2_{\gamma, B_{R}}+\|{\sf H}\|^2_{L^{2,\mu}(B_{R})} \right).
\end{align*}
This, combining with \eqref{1.34}, we can obtain
\begin{align}\label{1.40}
\| {\sf w}\|_{W^{1,p}(B_{R/2})}\leq C\left(\|{\sf w}\|_{L^{2}(B_{R})}
+[{\sf F}]_{\gamma,B_{R}}
+\|{\sf H}\|_{L^{\infty}(B_{R})}
\right).
\end{align}

Now, we prove the $W^{1, p}$ estimates near boundary $\Gamma$ by using the technology of locally flattening the boundary, which is the same to the proof in Theorem \ref{C1Gamma}. For simplicity, we use the same notation. Hence, we have
that $\mathcal{W}(y):={\sf w}(\Psi ^{-1}(y)) \in W^{1,2}(\mathcal{B}_R^+\subset \R^{n}, \R^m)$ satisfies
\begin{align*}
\sum\limits_{\alp,\bt,i,j}\int_{\mathcal{B}_R^+}\left(\mathcal{A}_{ij}^{\alpha\beta}\partial_{\beta}\mathcal{W}^{(j)}
+\mathcal{B}_{ij}^{\alpha}\mathcal{W}^{(j)}\right)
\ptl_{\alp} \phi^{(i)}
&+\mathcal{C}_{ij}^{\beta}\partial_{\beta}\mathcal{W}^{(j)}\phi^{(i)}
+\mathcal{D}_{ij}\mathcal{W}^{(j)}\phi^{(i)}\,dy\nonumber\\
=&\sum\limits_{\alp,i} \int_{\mathcal{B}_R^+}\mathcal{H}^{(i)}\phi^{(i)}+\mathcal{F}_{i}^{(\alpha)}\partial_{\alpha} \phi^{(i)}\,dy,
\end{align*}
for any $\phi\in W_{0}^{1,2}(\mathcal{B}_R^+, \R^m).$
In this special case, we can obtain the boundary estimate of the upper half space by using the above method of proving the interior  estimate \eqref{1.40},
thus, for $n\leq p<\infty$ we have
\begin{align*}
\|\mathcal{W}\|_{W^{1,p}(\mathcal{B}^+_{R/2})}\leq C\left(  \| \mathcal{W}\|_{L^2(\mathcal{B}_{R}^+)}+[\mathcal{F}]_{\gamma, \mathcal{B}_{R}^+}
+\|\mathcal{H}\|_{L^{\infty}(\mathcal{B}_{R}^+)} \right),
\end{align*}
where $C$ depends on $\lambda, \Lambda, p, R, \Psi$.
Then, changing back to the original variable $x$, we obtain
\begin{align*}
\|{\sf w}\|_{W^{1,p}(\mathcal{B}^+_{R/2})}\leq C\left(  \| {\sf w}\|_{L^2(\mathcal{B}_{R}^+)}+[{\sf F}]_{\gamma, \mathcal{B}_{R}^+}
+\|{\sf H}\|_{L^{\infty}(\mathcal{B}_{R}^+)} \right),
\end{align*}
where $\mathcal{N}^\prime=\Psi^{-1}(\mathcal{B}^+_{R/2})$, $\mathcal{N}=\Psi ^{-1}(\mathcal{B}^+_{R})$ and $C=C(\lam, \mu, p, R, \Psi)$. Furthermore, there exists a constant $0<\sigma<1$, independent on $R$, such that $ B_{\sigma R}(x_0)\cap Q \subset \mathcal{N}^\prime$.
Therefore, for any $x_0 \in Q'\cap \Gamma $, there exists $R_0:=R_0(x_0)>0$ such that,
\begin{align}\label{1.41}
\|\nabla {\sf w}\|_{W^{1,p}(B_{\sigma R_0}(x_0)\cap Q')}\leq C(\|{\sf w}\|_{L^2(Q)}+[{\sf F}]_{\gamma, Q}
+\|{\sf H}\|_{L^{\infty}(Q)}),
\end{align}
where $C$ depends on $\lambda, \Lambda, p, x_{0}, R$. Combining \eqref{1.40} and \eqref{1.41} and  making use of the finite covering theorem, We have completed the proof of the Theorem \ref{them2}.
Refer to the proof of  \cite[Theorem 2.4]{CL} for more details.
\end{proof}

\bigskip

\noindent Yan Li (Corresponding author)

\smallskip

\noindent
School of Mathematical Sciences, Beijing Normal University,
Beijing 100875, China.

\smallskip

\noindent {\it E-mail}: \texttt{yanli@mail.bnu.edu.cn} (Y. Li)

\end{document}